\newcommand{\cA}{\mathcal{A}}
\newcommand{\cB}{\mathcal{B}}
\newcommand{\cH}{\mathcal{H}}
\newcommand{\cL}{\mathcal{L}}
\newcommand{\cLR}{\mathcal{LR}}
\newcommand{\cM}{\mathcal{M}}
\newcommand{\cP}{\mathcal{P}}
\newcommand{\cR}{\mathcal{R}}
\newcommand{\cT}{\mathcal{T}}
\newcommand{\cZ}{\mathcal{Z}}
\newcommand{\sg}{\mathcal{h}}
\newcommand{\sd}{\mathcal{i}}
\newcommand{\bA}{\mathbf{A}}
\newcommand{\bP}{\mathbf{P}}
\newcommand{\bPr}{\mathbf{P}_{\mathbf{R}}}
\newcommand{\ba}{\mathbf{a}}
\newcommand{\fB}{\mathfrak{B}}
\newcommand{\fC}{\mathfrak{C}}
\newcommand{\fH}{\mathfrak{H}}
\newcommand{\fb}{\mathfrak{b}}
\newcommand{\nZ}{\mathbb{Z}}
\newcommand{\nR}{\mathbb{R}}
\newcommand{\nN}{\mathbb{N}}
\newcommand{\nC}{\mathbb{C}}
\newcommand{\Ga}{\Gamma}
\newcommand{\tB}{\tilde{B}}
\newcommand{\tC}{\tilde{C}}
\newcommand{\tG}{\tilde{G}}
\newcommand{\tc}{\tilde{c}}
\newcommand{\sqs}{\sqsubset}
\newcommand{\sq}{\sqsubseteq}
\newcommand{\ov}{\overline}
\newcommand{\ind}{\underset}
\newcommand{\ra}{\rightarrow}
\newtheorem{Th}{Theorem}[section]
\newtheorem{Lem}[Th]{Lemma}
\newtheorem{Cor}[Th]{Corollary}
\newtheorem{Prop}[Th]{Proposition}
\newtheorem{Def-Prop}[Th]{Definition-Proposition}
\theoremstyle{definition}
\newtheorem{Def}[Th]{Definition}
\newtheorem{Exa}[Th]{Example}
\theoremstyle{remark}
\newtheorem{Rem}[Th]{Remark}
\definecolor{purple}{rgb}{0.8,0.12,0.8}
\definecolor{orange}{rgb}{1.0,0.7,0.0}
\definecolor{pink}{rgb}{1,0.5,0.8}
\definecolor{blackg}{rgb}{0.1,0.25,0.1}
\definecolor{ForestGreen}{cmyk}{0.91,0,0.88,0.42}
\definecolor{Turquoise}{cmyk}{0.85,0,0.20,0}
\begin{document}


\title[Affine cellularity of affine Hecke algebras]{Affine cellularity of affine Hecke algebras of rank two}
\author{J\'er\'emie Guilhot \and Vanessa Miemietz}
\date{\today}

\address{J\'er\'emie Guilhot: School of Mathematics,
University of East Anglia 
Norwich NR4 7TJ, UK}

\email{j.guilhot@uea.ac.uk}

\address{Vanessa Miemietz: School of Mathematics,\\ 
University of East Anglia Norwich NR4 7TJ, UK}

\email{v.miemietz@uea.ac.uk}

\begin{abstract} We show that affine Hecke algebras of rank two with generic parameters are affine cellular in the sense of Koenig-Xi.
\end{abstract}

\maketitle

\section{Introduction}

In order to approach the fundamental problem of classifying the irreducible representations of a given finite-dimensional algebra, the concept of cellularity, defined by Graham and Lehrer \cite{GL}, has proven extremely useful. A cellular algebra comes, by definition, with a finite chain of ideals, whose subquotients, denoted by cells, decompose as left modules into a direct sum of copies of a certain module, called the cell module. This cell module comes with a bilinear form. Factoring out the radical with respect to this form leads to a simple module or zero, and in this way, one obtains a complete set of isomorphism classes of simple modules for the given cellular algebra. Examples of cellular algebras include many finite-dimensional Hecke algebras \cite{geck5}.

Recently, Koenig and Xi \cite{KX} have generalised this concept to algebras over a principal ideal domain $k$ of not necessarily finite dimension, by introducing the notion of an affine cellular algebra. Keeping the idea of having a filtration by a finite chain of ideals, the ideals are now allowed to be of infinite dimension. Where before a cell was isomorphic to a matrix ring over $k$ with a twisted multiplication, it is now isomorphic to a matrix ring (still of finite rank) over a quotient of a polynomial ring over $k$, with a twisted multiplication. The most important class of examples in \cite{KX} of affine cellular algebras is given by the extended affine Hecke algebras of type $A$.

In this article, we prove the following theorem, providing the first examples of affine cellularity for affine Hecke algebras with unequal parameters.

\begin{Th}\label{mainthmintro}
Let $\cH$ be an affine Hecke algebra of rank two defined over $\nC[v,v^{-1}]$ with generic parameters. Then $\cH$ is affine cellular in the sense of Koenig and Xi.\end{Th}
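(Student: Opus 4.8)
The plan is the following. An affine Hecke algebra of rank two is attached to an affine Weyl group of type $\tA_1\times\tA_1$, $\tA_2$, $\tC_2$ or $\tG_2$. I would first dispatch the two easy types: in type $\tA_1\times\tA_1$ the algebra $\cH$ is a tensor product of two rank-one affine Hecke algebras, and since a rank-one affine Hecke algebra is (elementarily) affine cellular and a tensor product over $\nC$ of affine cellular algebras is again affine cellular, we are done; in type $\tA_2$ the affine simple reflections are mutually conjugate, so the parameters are necessarily equal, and affine cellularity follows as in the type-$A$ case treated by Koenig--Xi. The substantial cases are thus $\tC_2$ and $\tG_2$ with genuinely unequal parameters.

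In those cases I would work with the Kazhdan--Lusztig basis $\{C_w\}_{w\in W}$ of $\cH$ over $\cA=\nC[v,v^{-1}]$, where $W$ is the extended affine Weyl group and the weight function is the one prescribed by the parameters, together with the $\cA$-algebra anti-automorphism $*$ fixing each $T_s$, which satisfies $*(C_w)=C_{w^{-1}}$. For generic parameters the partition of $W$ into two-sided Kazhdan--Lusztig cells is explicitly known in rank two (from the work of Bremke, Xi and Guilhot); there are finitely many two-sided cells, each a union of finitely many left cells. Ordering the two-sided cells $c_1,\dots,c_N$ compatibly with the preorder $\le_{\cLR}$ yields a chain of two-sided ideals $0=\cH_0\subset\cH_1\subset\cdots\subset\cH_N=\cH$, where $\cH_j$ is the $\cA$-span of those $C_w$ with $w\le_{\cLR} c_i$ for some $i\le j$; each $\cH_j$ is $*$-stable since two-sided cells are closed under $w\mapsto w^{-1}$. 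This is the candidate affine cell chain, and what remains is to show that every layer $\cH_c:=\cH_j/\cH_{j-1}$ is an affine cell ideal of $\cH/\cH_{j-1}$ in the sense of Koenig--Xi.

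For this I would treat each two-sided cell $c$ individually, using Lusztig's asymptotic algebra $J$ as a guide: properties P1--P15 hold in rank two for generic parameters, so $J=\bigoplus_c J_c$ decomposes into based rings, and in each of the finitely many cases one computes $J_c$ to be a matrix algebra $M_{n_c}(B_c)$ over a commutative affine ring $B_c$ --- the coordinate ring of a point, of a torus, or of a quotient of a torus by a finite group. Using that the structure constants $h_{x,y,z}$ with $x,y,z\in c$ have $v$-leading term $\ga_{x,y,z^{-1}}v^{\ba(c)}$, I would transfer this description to $\cH_c$ itself: decomposing $c$ into its left cells $\Ga_1,\dots,\Ga_r$ and grouping the $C_w$ accordingly, one writes $\cH_c\cong V_c\otimes_\nC B_c\otimes_\nC V_c$ with $V_c$ free of rank $r$, with multiplication $(u_1\otimes b_1\otimes u_2)(u_3\otimes b_2\otimes u_4)=u_1\otimes b_1\psi_c(u_2,u_3)b_2\otimes u_4$ for a bilinear form $\psi_c\colon V_c\times V_c\to B_c$, and with $*$ acting as $u_1\otimes b\otimes u_2\mapsto u_2\otimes\si_c(b)\otimes u_1$ for the natural involution $\si_c$ of $B_c$ (for a torus coordinate ring, inversion, reflecting $T_{t_\la}^{*}=T_{t_{-\la}}$). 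The two extreme cells are essentially known: the based ring of the lowest cell is a matrix ring over the group algebra of the translation lattice (Xi, Bremke--Xi), and the top cell, consisting of the length-zero elements, contributes a commutative group-algebra piece.

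The main difficulty I anticipate is twofold. First, the intermediate two-sided cells --- there are several in $\tG_2$, and their number and shape change as the ratio of the two parameters crosses certain thresholds, so that ``generic'' still splits into a handful of subcases --- require a hands-on determination of the relevant Kazhdan--Lusztig polynomials, structure constants, and based rings $J_c$ in every subcase. Second, and more delicate, is checking that each $B_c$ is genuinely commutative (as the Koenig--Xi definition requires) and that the passage from $J_c$ to $\cH_c$ is compatible with $*$, i.e.\ that $\psi_c$ really is $B_c$-valued and intertwines correctly with $\si_c$; when a nonabelian finite group is attached to $c$ --- a phenomenon already visible at equal parameters --- one uses that the corresponding based ring is Morita equivalent to a product of matrix algebras over commutative representation rings. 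Once the datum $(V_c,B_c,\psi_c,\si_c)$ has been pinned down for every $c$, reassembling the layers along the chain $\cH_0\subset\cdots\subset\cH_N$ produces the affine cell structure on $\cH$.
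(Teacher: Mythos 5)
Your global strategy coincides with the paper's: filter $\cH$ by the two-sided ideals $\cH_{\leq_{\cLR}\Gamma}$ attached to the (finitely many, explicitly known) two-sided cells for generic parameters, observe that each $\cH_{\leq_{\cLR}\Gamma}$ is stable under the anti-automorphism $T_w\mapsto T_{w^{-1}}$, and reduce to showing that each layer $\cM^{\cLR}_{\Gamma}$ is an affine cell ideal. The divergence, and the gap, is in how you propose to establish that last point. First, you invoke Lusztig's conjectures P1--P15 to get the decomposition $J=\bigoplus_c J_c$ of the asymptotic algebra into based rings; for $\tB_{2}$ and $\tG_{2}$ with unequal parameters these were not available (the paper deliberately uses only the much weaker statement $(\ast)$, namely that $x\leq_{\cL}y$ and $x\sim_{\cLR}y$ imply $x\sim_{\cL}y$, which is verified in rank two). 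Second, and more seriously, even granting $J_c\cong M_{n_c}(B_c)$, you give no mechanism for ``transferring'' this to $\cM^{\cLR}_{\Gamma}$ itself. The leading coefficients $\ga_{x,y,z^{-1}}$ determine only the top-degree part of the structure constants $h_{x,y,z}$, and the obvious basis correspondence $C_w\mapsto u_i\otimes b\otimes u_j$ does \emph{not} intertwine the multiplication of $\cM^{\cLR}_{\Gamma}$ with that of $V_c\otimes B_c\otimes V_c$ because of the lower-order terms; likewise, the canonical homomorphism from $\cM^{\cLR}_{\Gamma}$ to $(J_c)_{\cA}$ induced by Lusztig's map $\cH\to J_{\cA}$ is in general injective but not surjective over $\cA$, so it cannot be used to conclude either. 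This transfer is precisely where all the work of the paper lies.

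What the paper does instead is construct a \emph{different} $\cA$-basis of $\cM^{\cLR}_{\Gamma}$ in which the full (not just leading-term) multiplication visibly has the twisted matrix-ring form: each two-sided cell is parametrised by a bijection $\cZ\times\cT\times\cZ\to\Gamma$, $(z_i,\tau,z_j)\mapsto z_i^{-1}\tau w_{\Gamma}z_j$, and (generalised) induction of cells produces elements $\bP(y)$ with $\bP(y)C_{w_{\Gamma}}=C_{yw_{\Gamma}}$; the basis $[\bP(z_i^{-1})\bP(\tau)C_{w_{\Gamma}}\bPr(z_j)]$ is unitriangular with respect to $\{[C_w]\}$ and realises the isomorphism with $\mathbb{A}(V,B,\varphi)$, where $B$ is a polynomial ring $\cA[t_1,t_2]$, $\cA[t]$, $\cA[t]/(t^2-1)$ or $\cA$ (note: a polynomial, not Laurent polynomial, ring, with the \emph{identity} involution on $B$ rather than the inversion you predict from the group algebra of the translation lattice), plus a handful of finite cells in type $\tG_{2}$ handled by direct computation. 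Without an analogue of these $\bP$-elements, or some other device producing such a basis, your argument does not close.
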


Theorem \ref{mainthmintro} is proved by explicit construction of the associated twisted matrix rings and the isomorphism between these and the corresponding cells. 

\section{Affine cellular algebras} \label{affcell}

Let $k$ be a principal ideal domain. For a $k$-algebra $A$, a $k$-linear anti-automorphism $i$ of $A$ satisfying $i^2 = id_A$ is called a $k$-involution on $A$. For two $k$-modules $V,W$ denote by $\sigma$ the map $V \otimes_k W \ra W \otimes_k V$ given by $\sigma(v \otimes w) = w \otimes v$. If $B=k[t_1, \dots, t_r]/I$ for some ideal $I$ in a polynomial ring in finitely many variables over $K$, then $B$ is called an affine $k$-algebra. 
\begin{Def}\cite[Definition 2.1]{KX}
Let $A$ be a unitary $k$-algebra with a $k$-involution $i$. A two-sided ideal $J$ in $A$ is called an \emph{affine cell ideal} if and only if the following two conditions are satisfied.
\begin{enumerate}
\item We have $i(J) = J$.
\item There exists an affine $k$ algebra $B$ with a $k$-involution $\nu$, and a free $k$-module $V$ of finite rank such that $\Delta:=V \otimes_k B$ is an $A$-$B$-bimodule, where the right $B$-structure is induced by the regular $B$-module $B_B$.  
\item There is an $A$-$A$-bimodule isomorphism $\alpha: J \ra \Delta \otimes_B\Delta'$, where $\Delta' = B \otimes_k V$ is the $B$-$A$-bimodule with left $B$-structure induced by ${}_BB$ and right $A$ structure defined by $(b\otimes v)a = \sigma(i(a)(v \otimes b))$, such that the following diagram commutes:

$$\xymatrix{ J \ar^{\alpha}[rr]  \ar^{i}[d]&& \Delta \otimes_B\Delta' \ar^{v \otimes b \otimes_B b' \otimes w \mapsto w \otimes \nu(b') \otimes_B \nu(b) \otimes v }[d] \\ J \ar^{\alpha}[rr]&&\Delta \otimes_B\Delta'.}$$
\end{enumerate}

The algebra $A$ together with its $k$-involution $i$ is called \emph{affine cellular} if and only if there is a $k$-module decomposition $A= J_1' \oplus J_2' \oplus \cdots \oplus J_n'$ for some $n$ with $i(J_l')=J_l$ for $1 \leq l \leq n$, such that, setting $J_m:= \bigoplus_{l=1}^m J_l'$, we obtain a filtration
$$0=J_0 \subset J_1 \subset J_2 \subset \cdots \subset J_n=A$$
of $A$ by two-sided ideals, where each $J_m' = J_m/J_{m-1}$ is an affine cell ideal of $A/J_{m-1}$ (with respect to the involution induced by $i$ on the quotient).
\end{Def}
 
For an affine $k$-algebra $B$ with a $k$-involution $\nu$, a free $k$-module $V$ of finite rank and a $k$-bilinear form $\varphi:V\times V \ra B$, denote by $\mathbb{A}(V,B,\varphi)$ the (possibly non-unital) algebra given as a $k$-module by $V\otimes_k B \otimes_k V$, on which we impose the multiplication $(v_1 \otimes b_1\otimes w_1)(v_2 \otimes b_2 \otimes w_2) := v_1 \otimes b_1 \varphi(w_1,v_2) b_2 \otimes w_2$. 

The description of affine cell ideal we are going to use is the following:

\begin{Prop}\cite[Proposition 2.3]{KX} \label{descrip}
Let $k$ be a principal ideal domain, $A$ a unitary $k$-algebra with a $k$-involution $i$. A two-sided ideal $J$ in $A$ is an affine cell ideal if and only if there exists an affine $k$-algebra $B$ with a $k$-involution $\nu$, a free $k$-module $V$ of finite rank and a bilinear form $\varphi: V \otimes V \ra B$, and an $A$-$A$-bimodule structure on $V \otimes_k B\otimes_kV$, such that $J \cong \mathbb{A}(V,B,\varphi)$ as an algebra and an $A$-$A$-bimodule, and such that under this isomorphism the $k$-involution $i$ restricted to $J$ corresponds to the $k$-involution given by $v \otimes b\otimes w \mapsto w \otimes \nu(b) \otimes v$.
\end{Prop}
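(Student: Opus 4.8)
The plan is to prove the equivalence by transporting all the structure across one basic computation: since $V$ is free of finite rank over $k$, the $k$-module $\Delta \otimes_B \Delta' = (V \otimes_k B) \otimes_B (B \otimes_k V)$ is naturally identified with $V \otimes_k B \otimes_k V$ via $\Theta\colon (v \otimes b) \otimes_B (b' \otimes w) \mapsto v \otimes bb' \otimes w$, with inverse $v \otimes c \otimes w \mapsto (v \otimes c) \otimes_B (1 \otimes w)$. Both directions of the Proposition then amount to checking that, under $\Theta$, the three pieces of data in Definition 2.1 --- the $A$-$A$-bimodule structure, the algebra (ideal) multiplication, and the involution --- correspond exactly to those of $\mathbb{A}(V,B,\varphi)$ for a suitable $\varphi$. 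I would organise the argument so that the forward direction produces $\varphi$ together with the bimodule structure, while the converse simply reverses the construction.

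For the forward direction, assume $J$ is an affine cell ideal with data $B,\nu,V,\Delta,\Delta',\alpha$. First transport the bimodule structure: writing the left $A$-action on $\Delta$ as $a\cdot(v\otimes 1)=\sum_j v_j\otimes b_j$ and using right $B$-linearity, one gets $a\cdot(v\otimes c\otimes w)=\sum_j v_j\otimes b_j c\otimes w$ under $\Theta$, so the left action operates through the first two tensor legs and fixes the last copy of $V$; symmetrically the right $A$-action operates through the last two legs via $\Delta'$. Next, transport the multiplication. Since $J$ is a two-sided ideal, associativity in $A$ gives $(xa)y=x(ay)$, so the multiplication $J\otimes_k J\to J$ is $A$-balanced and factors through $\Delta\otimes_B(\Delta'\otimes_A\Delta)\otimes_B\Delta'\to\Delta\otimes_B\Delta'$. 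I would then define $\varphi\colon V\times V\to B$ by reading off the $B$-valued inner contraction of this map, i.e. by the rule that the product of $v_1\otimes 1\otimes w_1$ and $v_2\otimes 1\otimes w_2$ has middle component $\varphi(w_1,v_2)$, and check that the full product equals $v_1\otimes b_1\varphi(w_1,v_2)b_2\otimes w_2$, which is precisely the multiplication of $\mathbb{A}(V,B,\varphi)$.

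The involution is a short diagram chase. The vertical map in Definition 2.1(3) sends $(v\otimes b)\otimes_B(b'\otimes w)\mapsto (w\otimes\nu(b'))\otimes_B(\nu(b)\otimes v)$; applying $\Theta$ to both sides and using that $\nu$ is an anti-automorphism, so $\nu(b')\nu(b)=\nu(bb')$, this becomes $v\otimes c\otimes w\mapsto w\otimes\nu(c)\otimes v$ with $c=bb'$. Commutativity of the diagram thus identifies $i|_J$ with exactly the involution in the statement. For the converse, given $B,\nu,V,\varphi$, the bimodule structure on $V\otimes_k B\otimes_k V\cong J$, and the two isomorphisms, I would set $\Delta=V\otimes_k B$ and $\Delta'=B\otimes_k V$ with right, resp. left, regular $B$-action, extract the required one-sided $A$-actions from the given bimodule structure (which, being compatible with the $\mathbb{A}(V,B,\varphi)$-multiplication, operates through the first two, resp. last two, tensor legs), let $\alpha$ be the composite $J\cong \mathbb{A}(V,B,\varphi)\xrightarrow{\Theta^{-1}}\Delta\otimes_B\Delta'$, and verify conditions (1)--(3), the diagram following by running the previous chase in reverse.

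The main obstacle is the step that produces $\varphi$: one must show that the transported multiplication genuinely has the ``sandwich'' form of $\mathbb{A}(V,B,\varphi)$, namely that the outer $V$- and $B$-components of the two factors pass through unchanged while only the inner pair $(w_1,v_2)$ is contracted into $B$, and that this contraction is well defined (independent of the outer data) and $k$-bilinear. This is exactly where the interplay of the $A$-$A$-bimodule structure with associativity of the ideal multiplication is essential; once the sandwich form and the bilinearity of $\varphi$ are in hand, the remaining identifications of the bimodule structure and of the involution are routine.
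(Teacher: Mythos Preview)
The paper does not prove this proposition; it is quoted as \cite[Proposition~2.3]{KX} and used as a black box, so there is no argument in the paper to compare your proposal against.

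That said, your outline is the natural one and matches the strategy in Koenig--Xi: identify $\Delta\otimes_B\Delta'$ with $V\otimes_k B\otimes_k V$ via $\Theta$, transport the bimodule structure, and extract $\varphi$ from the ideal multiplication by computing $j_1 j_2$ once as the left $A$-action of $j_1$ on $j_2$ and once as the right $A$-action of $j_2$ on $j_1$. You correctly isolate the crux: fixing a basis $e_1,\dots,e_n$ of $V$ and taking $j_1=\alpha^{-1}(e_p\otimes 1\otimes e_q)$, $j_2=\alpha^{-1}(e_r\otimes 1\otimes e_s)$, the left-action computation forces the third tensor leg of $\alpha(j_1j_2)$ to be $e_s$ while the right-action computation forces the first leg to be $e_p$; since $V$ is free, equating the two expressions in $V\otimes_k B\otimes_k V$ shows the middle $B$-entry depends only on $(q,r)$, which is exactly what lets you define $\varphi(e_q,e_r)$. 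The involution check is, as you say, a direct chase using $\nu(b')\nu(b)=\nu(bb')$.

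One point in your converse is underspecified. You assert that the given $A$-$A$-bimodule structure on $V\otimes_k B\otimes_k V$ ``operates through the first two, resp.\ last two, tensor legs'', so that a left $A$-action on $\Delta=V\otimes_k B$ can simply be read off. This is not automatic from the hypotheses as stated: you are only told that the bimodule structure restricts on $J\subseteq A$ to the $\mathbb{A}(V,B,\varphi)$-multiplication, not that the full $A$-action preserves the last $V$-leg. Establishing this (or arranging the $A$-action on $\Delta$ by other means and then checking that $\alpha$ is an $A$-$A$-bimodule map) is part of what needs to be done, and you should flag it as a genuine step rather than a formality.
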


Let now $A$ be an affine cellular algebra with a cell chain $0=J_0 \subset J_1 \subset J_2 \subset \cdots \subset J_n=A$, such that each subquotient $J_i/J_{i-1}$ is an affine cell ideal in $A/J_{i-1}$. Then  $J_i/J_{i-1}$ is isomorphic to $\mathbb{A}(V_i,B_i,\varphi_i)$ for some finite-dimensional vector space $V_i$, a commutative $k$-algebra $B_i$ and a bilinear form $\varphi_i: V_i \times V_i \ra B_i$. Let $(\phi^i_{st})$ be the matrix representing the bilinear form $\phi$ with respect to some choice of basis of $V_i$.
Then Koenig and Xi obtain a parameterisation of simple modules over an affine cellular algebra by establishing a bijection between isomorphism classes of  simple $A$-modules and the set 
$$\{ (j, \mathrm{m}) \mid 1 \leq j\leq n,\mathrm{m} \in \mathrm{MaxSpec}(B_j) \textrm{ such that some } \phi^j_{st} \not\in \mathrm{m}\}$$
where $\mathrm{MaxSpec}(B_j)$ denotes the maximal ideal spectrum of $B_j$.
Furthermore, assume that $J_i/J_{i-1}$ is an idempotent ideal in $A/J_{i-1}$ (meaning $(J_i/J_{i-1})^2 = J_i/J_{i-1}$),  that it contains an idempotent in $A/J_{i-1}$ and that the radical of every $B_j$  is zero. Then,  Koenig and Xi show that $A$ has finite global dimension if and only if every $B_i$ has finite global dimension.

\section{Hecke algebras and Kazhdan-Lusztig cells}
In this section $(W,S)$ denotes an arbitrary Coxeter system (with $|S|<\infty$) together with a positive weight function $L$.  A positive weight function is a function $L:W\rightarrow\nN$ such that $L(ww')=L(w)+L(w')$ whenever $\ell(ww')=\ell(w)+\ell(w')$ where $\ell$ denotes the usual length function on $W$. The main reference is \cite{bible}.


\subsection{Hecke algebras and Kazhdan-Lusztig basis}
Let $\cA=\nC[v,v^{-1}]$ where $v$ is an indeterminate. Let $\cH$ be the Iwahori-Hecke algebra associated to $W$, with $\cA$-basis  $\{T_{w}|w\in W\}$ and multiplication rule given by
\begin{equation*}
T_{s}T_{w}=
\begin{cases}
T_{sw}, & \mbox{if } \ell(sw)>\ell(w),\\
T_{sw}+(v^{L(s)}-v^{-L(s)})T_{w}, &\mbox{if } \ell(sw)<\ell(w),
\end{cases}
\end{equation*}
for all $s\in S$ and $w\in W$. Let $\bar\ $ be the ring involution of $\cA$ which takes $v$ to $v^{-1}$. It can be extended to a ring involution of $\cH$ via
$$\ov{\sum_{w\in W} a_{w}T_{w}}=\sum_{w\in W} \bar{a}_{w}T_{w^{-1}}^{-1}\quad (a_{w}\in\cA).$$
We set 
$$\begin{array}{ccccccc}
\cA_{<0}&=v^{-1}\nZ[v^{-1}] &\text{ and }& \cH_{<0}=\bigoplus_{w\in W} \cA_{<0}T_{w}.
\end{array}$$
For each $w\in W$ there exists a unique element $C_{w}\in\cH$ (see \cite[Theorem 5.2]{bible}) such that
\begin{enumerate}
\item $\bar{C}_{w}=C_{w}$
\item $C_{w}\equiv T_{w} \mod \cH_{<0}$.
\end{enumerate}
For any $w\in W$ we set 
$$C_{w}=T_{w}+\sum_{y\in W } P_{y,w} T_{y}\quad \text{where $P_{y,w}\in \cA_{< 0}$}.$$
It is well known (\cite[\S 5.3]{bible}) that $P_{y,w}=0$ whenever $y\nleq w$  (here $\leq$ denotes the Bruhat order). It follows that $\{C_{w}|w\in W\}$ forms an $\cA$-basis of $\cH$ (the ``Kazhdan-Lusztig basis''). The coefficients $P_{y,w}$ are known as the Kazhdan-Lusztig polynomials. 

\begin{Def}\label{flat}
Following Lusztig \cite[\S 3.4]{bible}, there exists a unique involutive antiautomorphism, i.e. an $\cA$-involution, $\flat:\cH\longrightarrow\cH$ which carries $T_{w}$ to $T_{w^{-1}}$.  
\end{Def}

\begin{Rem}\label{flataction}
Using this map, we obtain right handed version of the multiplication of $\cH$:
\begin{equation*}
T_{w}T_{s}=
\begin{cases}
T_{ws}, & \mbox{if } \ell(ws)>\ell(w),\\
T_{ws}+(v^{L(s)}-v^{-L(s)})T_{w}, &\mbox{if } \ell(ws)<\ell(w).
\end{cases}
\end{equation*}
Further, since $\flat$ sends $\cH_{<0}$ to itself it can be shown that \cite[\S 5.6]{bible} that $C_{w}^{\flat}=C_{w^{-1}}$, from where it follows that  
$$P_{y,w}=P_{y^{-1},w^{-1}}.$$
\end{Rem}

\subsection{Kazhdan-Lusztig cells}
We denote by $h_{x,y,z}$ the structure constant with respect to the Kazhdan-Lusztig basis. That is,  we set

$$C_{x}C_{y}=\sum_{z\in W} h_{x,y,z}C_{z}.$$

Note that $\bar{h}_{x,y,z}=h_{x,y,z}$ and, similarly to Remark \ref{flataction}, we have  $h_{x,y,z}=h_{y^{-1},x^{-1},z^{-1}}$. We write $z\leftarrow_{\cL} y$ if there exists some $s\in S$ such that $h_{s,y,z}\neq 0$, that is $C_{z}$ appears with a non-zero coefficient in the expression of $C_{s}C_{y}$ in the Kazhdan-Lusztig basis. The Kazhdan-Lusztig left pre-order $\leq_{\cL}$ on $W$ is the transitive closure of this relation. 
The equivalence relation associated to $\leq_{\cL}$ will be denoted by $\sim_{\cL}$, that is

$$x\sim_{\cL}y \Longleftrightarrow x\leq_{\cL}y\text{ and } y\leq_{\cL} x\quad (x,y\in W).$$

The corresponding equivalence classes are called the left cells of $W$. Similarly, we can define a pre-order $\leq_{\cR}$ multiplying on the right in the defining relation. The associated equivalence relation will be denoted by $\sim_{\cR}$ and the corresponding equivalence classes are called the right cells of $W$. Using the antiautomorphism $\flat$, we have (see \cite[\S 8]{bible}) 

$$x\leq_{\cL} y \Longleftrightarrow x^{-1}\leq_{\cR} y^{-1}.$$

Finally we write $x\leq_{\cLR} y$ if there exists a sequence $x=x_{0},x_{1},...,x_{n}=y$ of $W$ such that for each $0\leq i\leq n-1$ we have either $x_{i}\leftarrow_{\cL} x_{i+1}$ or $x_{i}\leftarrow_{\cR} x_{i+1}$. The equivalence relation associated to $\leq_{\cLR}$ will be denoted by $\sim_{\cLR}$ and the equivalence classes are called the two-sided cells of $W$. \\
The preorders $\leq_{\cL},\leq_{\cR},\leq_{\cLR}$ induce partial orders on the left, right and two-sided cells, respectively.


\subsection{Kazhdan-Lusztig cell modules}
\label{cell-mod}
We will follow the notation of \cite{semi}. Let $?\in\{\cL,\cR,\cLR\}$. We define a $?$-ideal to be a left ideal if $?=\cL$, a right ideal if $?=R$ and a two-sided ideal if $?=\cLR$. Similarly, an $\cH$-?-module is a left $\cH$-module if $?=\cL$, a right $\cH$-module if $?=R$ and an $\cH$-$\cH$-bimodule if $?=\cLR$. 

Let $\Gamma$ be a $?$-cell of $W$.  We set

$$\begin{array}{rllcc}
\cH_{\leq_{?}\Gamma}&=\sg C_{y}\mid y\leq_{?}w, w\in\Gamma \sd_{\cA} \text{ and }\\
\cH_{<_{?}\Gamma}&=\sg C_{y}\mid y<_{?}w, w\in\Gamma \sd_{\cA}.\\
\end{array}$$

Then by definition of  $\leq_{?}$, we see that  $\cH_{\leq_{?}\Gamma}$ and $\cH_{<_{?}\Gamma}$ are $?$-ideals of $\cH$. Therefore 
$$\cM^{?}_{\Gamma}:=\cH_{\leq_{?}\Gamma}/\cH_{<{?}\Gamma}$$  is naturally an $\cH$-$?$-module. It is called the Kazhdan-Lusztig cell module associated to $\Gamma$. Note that it is a free $\cA$-module with basis the images of the elements $C_{w}$ for $w\in \Gamma$.

Let $\Gamma$ be a two-sided cell of $W$ and let 
$$\Gamma=\bigcup^{m}_{i=1} \Gamma_{i}$$
be its decomposition into left cells. We denote by $[\ .\ ]$ the natural projection onto $\cM^{\cLR}_{\Gamma}$. (Then $\cM^{\cLR}_{\Gamma}$ has an $\cA$-basis $\{[C_{w}]\mid w\in \Gamma\}$.) As an $\cA$-module we have

$$\cM^{\cLR}_{\Gamma}\cong\bigoplus_{i=1}^{m} \cM^{\cL}_{\Ga_{i}}.$$
Further if one assumes that the Lusztig conjecture
\begin{equation*}
x\leq_{\cL} y \text{ and } x\sim_{\cLR} y \Longrightarrow x\sim_{\cL}y\tag{$\ast$}
\end{equation*}

holds, then the $\cA$-submodules $\cM^{\cL}_{\Gamma_{i}}$ are left $\cH$-submodules of $\cM^{\cLR}_{\Gamma}$ for all $1\leq i\leq m$. Indeed, for all $h\in\cH$ and $y \in \Gamma_{i}$, we have  
$$hC_{y}=\sum_{z\leq_{\cL}y} a_{z} C_{z}=\sum_{z\leq_{\cL} y, z\in \Gamma}a_{z} C_{z}+\sum_{z\leq_{\cL}y,z\notin \Gamma}a_{z} C_{z}\quad\text{for some $a_{z}\in \cA$}$$ 
which, using $(\ast)$, yields that $h[C_{y}]\in\cM^{\cL}_{\Gamma_{i}}$.

\begin{Rem}
Conjecture $(\ast)$ is known to hold in the equal parameter case. In \cite{jeju4} it is shown that it holds in affine Weyl groups of rank 2 for all choices of parameters. 
\end{Rem}

Finally, since the $\cH$-$\cH$-bimodule $\cM^{\cLR}_{\Gamma}$ is a two-sided ideal in $\cH/ \cH_{<_{\cLR} \Gamma}$, it  can be viewed as an algebra (possibly without identity element). The multiplication is given by 
$$[C_{x}][C_{y}]=[C_{x}C_{y}]=\sum_{z\in\Gamma} h_{x,y,z}[C_{z}].$$
If one assumes that $(\ast)$ holds, then the $\cA$-submodules $\cM^{\cL}_{\Gamma_{i}}$ are left ideals of $\cM^{\cLR}_{\Gamma}$ for all $1\leq i\leq m$. 

\begin{Rem}
Note that the involution $\flat$ fixes each $\cH_{\leq_{\cLR}\Ga}$ and $\cH_{<_{\cLR}\Ga}$ hence induces an involution on $\cM^{\cLR}_{\Ga}$. We will still denote this involution by $\flat$.
\end{Rem}


\subsection{Generic parameters}
\label{generic}
Let $\bar{S}=\{\bar{{\bf s}}_{1},...,\bar{{\bf s}}_{r}\}$ be the set of conjugacy classes in $S$. Any weight function on $W$ is completely determined by its values on $\bar{S}$. Let $V=\nR^{r}$ be the Euclidean space of dimension $r$ and let $\omega_{1},...,\omega_{r}$ be the standard basis of $V$.  We identify the set of weight functions on $W$ with the set of points in $V$ with integer coordinates via
$$L\longrightarrow (L(s_{1}),...,L(s_{r}))\in V$$
where $s_{i}\in \bar{{\bf s}}_{i}$ for all $i$. The element of the $r$-tuple $(L(s_{1}),...,L(s_{r}))$ are called the parameters. To any choice of parameters one can associate a partition of $W$ into left, right and two-sided cells. 

According to Bonnaf\'e's semicontinuity conjecture  \cite{semi}, there exists a minimal finite set of hyperplanes $\fH$ in $V$ such that the partition of $W$ into cells is the same for all parameters $\cP,\cP'\in\nN^{r}$ belonging to the same $\fH$-facet (we refer to \cite{Bourbaki} for the definition of facets). The elements of this minimal set are called essential hyperplanes. The conjecture also states that the partition into cells for non-generic parameters can be recovered from the partition with respect to generic parameters. We refer to \cite{semi} for details on this conjecture.  In the following definition we assume that Bonnaf\'e's conjecture holds. 
\begin{Def}
The parameters $\cP:=(a_{1},\ldots,a_{r})\in\nN^{r}$ are called generic if they do not belong to any essential hyperplane for $W$.
\end{Def}
In this paper, we are only concerned with affine Weyl groups of rank 2 where the semicontinuity conjecture is known to hold and where the generic parameters have been determined in \cite{jeju4}.

\begin{Exa}
\label{G2}
Let $(W,S)$ be the affine Weyl group of type $\tG_{2}$ with diagram and weight function given by 
\begin{center}
\begin{picture}(150,32)
\put( 40, 10){\circle{10}}
\put( 44,  7){\line(1,0){33}}
\put( 45,  10){\line(1,0){30.5}}
\put( 44, 13){\line(1,0){33}}
\put( 81, 10){\circle{10}}
\put( 86, 10){\line(1,0){29}}
\put(120, 10){\circle{10}}
\put( 38, 20){$a$}
\put( 78, 20){$b$}
\put(118, 20){$b$}
\put( 38, -3){$s_{1}$}
\put( 78, -3){$s_{2}$}
\put(118,-3){$s_{3}$}
\end{picture}
\end{center}
\vspace{.05cm}
where $a,b$ are positive integers. Then $(a,b)\in\nN^{2}$ is generic if and only if $a/b\neq 1,\ 3/2,\ 2$. The corresponding partition into cells can be found in \cite{jeju4}.
\end{Exa}

\begin{Exa}
\label{B2}
Let $(W,S)$ be the affine Weyl group of type $\tB_{2}$ with diagram and weight function given by
\begin{center}
\begin{picture}(150,32)
\put( 40, 10){\circle{10}}
\put( 44.5,  8){\line(1,0){31.75}}
\put( 44.5,  12){\line(1,0){31.75}}
\put( 81, 10){\circle{10}}
\put( 85.5,  8){\line(1,0){29.75}}
\put( 85.5,  12){\line(1,0){29.75}}
\put(120, 10){\circle{10}}
\put( 38, 20){$a$}
\put( 78, 20){$b$}
\put(118, 20){$c$}
\put( 38, -3){$s_{1}$}
\put( 78, -3){$s_{2}$}
\put(118,-3){$s_{3}$}
\end{picture}
\end{center}
\vspace{.05cm}
where $a,b,c$ are positive integers. We define the following hyperplanes in $\nR^{2}$
\begin{center}
\psset{xunit=1.5cm}
\psset{yunit=1.5cm}
\begin{pspicture}(0,-0.3)(5.5,5.3)
\psgrid[subgriddiv=1,griddots=10,gridlabels=8 pt](0,0)(5,5)

\psline(0,0)(5,5)
\psline(1,0)(1,1)
\psline(1,1)(5,1)
\psline(1,0)(0.5,.5)
\psline(1,0)(5,4)
\psline(2,0)(1,1)
\psline(2,0)(5,3)


\psline(0,1)(1,1)

\psline(.5,.5)(0,1)

\psline(1,1)(1,5)

\psline(1,1)(0,2)

\psline(0,2)(3,5)
\psline(0,1)(4,5)


\rput(3.5,.5){$A_{1}$}
\rput(2,.5){$A_{2}$}
\rput(1.5,.2){$A_{3}$}
\rput(1.2,.5){$A_{4}$}
\rput(1.5,.8){$A_{5}$}


\rput(4.5,1.5){$C_{3}$}
\rput(3,1.5){$C_{2}$}
\rput(2,1.5){$C_{1}$}

\rput(.5,.25){$B_{2}$}
\rput(.8,.5){$B_{1}$}

\psline{->}(0,0)(0,5)
\psline{->}(0,0)(5,0)
\end{pspicture}
\end{center}

Then  $(a,b,c)\in\nN^{3}$ is generic if and only if $(a/b,c/b)$ does not belong to any hyperplanes on the picture above \cite{jeju4,comp}. The corresponding partition into cells can be found in \cite{comp}. 
\end{Exa}


\subsection{On the induction of Kazhdan-Lusztig cells}
In this section we introduce the relative Kazhdan-Lusztig polynomials as in \cite{geck}. Let $S'\subsetneq S$. We denote by $W'$ the subgroup of $W$ generated by $S'$ and by $X'$ the set of distinguished left coset representative of $W'$ in $W$. Every element of $w\in W$ can be written uniquely $w=xu$ where $x\in X'$ and $u\in W'$. Note that $\ell(w)=\ell(x)+\ell(u)$. \\
Let $\preceq'$ be the Kazhdan-Lusztig left preorder relation defined with respect to the Coxeter group $(W',S')$ and the corresponding Hecke algebra. We define a relation $\sq$ on $W$ as follows. Let $x,y\in X'$ and $u,v\in W'$. We write $xu\sqs yv$ if $x<y$ and $u\preceq v$. We write $xu\sq yv$ if $xu\sqs yv$ or $x=y$ and $u=v$. Then we have
\begin{Prop}(\cite[Proposition 3.3]{geck})
\label{induction-st}
For any $y\in X'$ and $v\in W'$ we have
$$C_{yv}=\underset{xu\sq yv}{\sum_{x\in X',u\in W'}}p^{*}_{xu,yv}T_{x}C_{u}$$
where $p^{*}_{yv,yv}=1$ and $p_{xu,yv}\in\bA_{<0}$ if $xu\sqs yv$.
\end{Prop}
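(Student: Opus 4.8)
The plan is to establish the decomposition $C_{yv}=\sum p^{*}_{xu,yv}T_{x}C_{u}$ by an induction argument that exploits the structure of the distinguished coset representatives, mirroring Deodhar-type factorisation results but in the unequal-parameter setting following Geck.

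\medskip

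\emph{Setting up the induction.} First I would set $h:=\sum_{xu\sq yv}p^{*}_{xu,yv}T_{x}C_{u}$, where the coefficients $p^{*}_{xu,yv}\in\cA$ are to be determined, and aim to show that $h=C_{yv}$ by verifying the two characterising properties of the Kazhdan-Lusztig basis element (Theorem 5.2 of \cite{bible}): that $h$ is $\bar{\ }$-invariant and that $h\equiv T_{yv}\bmod \cH_{<0}$. The induction is on the Bruhat order (or on $\ell(x)$ within $X'$), with the base case being the terms with $x=y$: here $u=v$ is forced by the definition of $\sq$, $p^{*}_{yv,yv}=1$, and the contribution is $T_{y}C_{v}$. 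Since $\ell(yv)=\ell(y)+\ell(v)$ and $C_v\equiv T_v\bmod\cH^{W'}_{<0}$ with all lower terms $T_w$ ($w\in W'$) satisfying $\ell(yw)=\ell(y)+\ell(w)$, multiplication by $T_y$ on the left is ``length-additive'', so $T_yC_v\equiv T_{yv}\bmod\cH_{<0}$ and the congruence condition holds automatically for this leading block.

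\medskip

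\emph{Constructing the correction terms.} The element $T_yC_v$ is not $\bar{\ }$-invariant in general. Writing $\ov{T_yC_v}=\ov{T_y}\,\ov{C_v}=\ov{T_y}\,C_v$ and expanding $\ov{T_y}$ in the $T$-basis (using $\ov{T_y}=T_{y^{-1}}^{-1}$ and the known formula for inverse basis elements), one gets $T_yC_v-\ov{T_yC_v}\in\bigoplus_{x'<y,\,u\in W'}\cA\,T_{x'}C_u$, since $\{T_xC_u\}$ is a basis of $\cH$ compatible with the coset decomposition. One then subtracts a correction $\sum_{x'<y}(\text{lower-degree terms})T_{x'}C_u$ to kill the non-invariant part and the positive-degree part of the congruence, exactly as in the standard existence proof for the $C_w$. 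The key point, which must be checked, is that the correction terms introduced at each stage involve only $x'\sqs yv$ (i.e.\ $x'<y$ in $X'$ and $u\preceq v$ in $W'$) and have coefficients in $\cA_{<0}=\bA_{<0}$; this is where the $\sq$ relation is tailored precisely so that the recursion stays within the claimed index set. Uniqueness of the $C_{yv}$ then forces $h=C_{yv}$ and pins down the $p^{*}_{xu,yv}$.

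\medskip

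\emph{Main obstacle.} I expect the principal difficulty to be controlling the support of the correction terms in the second step: one must show that expanding $\ov{T_y}C_v$ and the successive corrections never produces a term $T_{x'}C_{u'}$ with $u'\not\preceq v$ (in the $W'$-preorder) or with $x'\not< y$. This requires combining the multiplication rules in $\cH$ relative to the parabolic decomposition $W=X'\cdot W'$ with the compatibility of left multiplication by $T_s$ ($s\in S$) with the relation $\sq$ — essentially, that $T_s T_x C_u$, when re-expanded in the $\{T_{x''}C_{u''}\}$ basis, only involves pairs $x''u''\sq xu$ up to the usual Kazhdan-Lusztig corrections. In the unequal-parameter case one cannot appeal to positivity, so this must be done by a careful degree bookkeeping argument on the coefficients, tracking how the factors $v^{L(s)}-v^{-L(s)}$ interact with elements of $\bA_{<0}$. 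Once this support/degree control is in place, the $\bar{\ }$-invariance and the mod-$\cH_{<0}$ normalisation follow formally, and the proposition is proved.
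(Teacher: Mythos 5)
Your argument is correct and is essentially the proof of the result the paper simply quotes from Geck (and recovers via the generalised induction framework of Section 3.6, where conditions {\bf I1}--{\bf I5} are checked for $U=W'$, $X_u=X'$): triangularity of the bar involution on the basis $\{T_xC_u\}$ with respect to $\sq$, the standard Lusztig-type correction argument producing coefficients in $\cA_{<0}$, and the congruence $T_xC_u\equiv T_{xu}\bmod\cH_{<0}$ to identify the resulting bar-invariant element with $C_{yv}$ by uniqueness. The only remark is that the ``main obstacle'' you single out is milder than you suggest: writing any $x\leq y$ as $x=x'w$ with $x'\in X'$, $w\in W'$ (so $x'\leq x\leq y$), one has $T_xC_v=T_{x'}(T_wC_v)$ and $T_wC_v\in\bigoplus_{u\preceq v}\cA\,C_u$ by the very definition of the left preorder on $W'$, so the support control is immediate and requires no degree bookkeeping or positivity; degrees enter only when solving the bar-invariance recursion within $\cA_{<0}$, which is the standard argument valid for arbitrary (unequal) parameters.
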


\begin{Rem}
Recall the $\cA$-involution $\flat:\cH\rightarrow\cH$ from Definition \ref{flat}, which can be used to obtain a right-handed version of the above result. First $Y'=X'^{-1}$ is the set of distinguished right coset representative of $W'$ in $W$. Any $w\in W$ can be uniquely written $w=ux$ where $u\in W'$ and $x\in Y'$. We get for all $y\in Y'$ and $v\in W'$:
$$C_{vy}=\underset{ux\sq_{\cR} vy}{\sum_{x\in Y',u\in W'}}p^{\ast,r}_{ux,vy}C_{u}T_{x}$$
where $p^{\ast,r}_{ux,vy}=p^{\ast}_{(ux)^{-1},(yv)^{-1}}$. 
\end{Rem}
Using this theorem, Geck obtained the following result (see \cite[Section 4]{geck}).
\begin{Cor}
\label{left-ideal}
Let $\fb$ be a left ideal of $W'$ with respect to $\leq'_{\cL}$. Then the set $X'\fb$ is a left ideal of $W$ with respect to $\leq_{\cL}$.
\end{Cor}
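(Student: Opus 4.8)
I would first translate the statement into the language of Hecke algebras, using the standard fact that a subset $\fc\subseteq W$ is a left ideal for $\leq_{\cL}$ if and only if $\cH_{\fc}:=\sg C_{w}\mid w\in\fc\sd_{\cA}$ is a left ideal of $\cH$. (One implication is immediate from $C_{s}C_{w}\in\sg C_{z}\mid z\leq_{\cL}w\sd_{\cA}$; for the other, if $C_{w'}$ occurs in some $C_{s}C_{w}\in\cH_{\fc}$ then $w'\in\fc$, and one iterates along a chain witnessing $w'\leq_{\cL}w$.) So it suffices to show that $\cH_{X'\fb}$ is a left ideal of $\cH$. Writing $\cH'$ for the Hecke algebra attached to $(W',S')$, the hypothesis says exactly that $\cH'_{\fb}:=\sg C_{u}\mid u\in\fb\sd_{\cA}$ is a left ideal of $\cH'$ (with respect to $\preceq'$).

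The next ingredient is that $\cH$ is a free right $\cH'$-module with basis $\{T_{x}\mid x\in X'\}$: this follows from the unique factorisation $w=xu$ ($x\in X'$, $u\in W'$) together with $\ell(w)=\ell(x)+\ell(u)$, which gives $T_{w}=T_{x}T_{u}$, so that $\cH=\bigoplus_{x\in X'}T_{x}\cH'$ as $\cA$-modules. I would then set $N:=\bigoplus_{x\in X'}T_{x}\cH'_{\fb}\subseteq\cH$ and prove $N=\cH_{X'\fb}$. The inclusion $\cH_{X'\fb}\subseteq N$ should fall straight out of Proposition \ref{induction-st}: for $yv\in X'\fb$ it expresses $C_{yv}$ as a combination of terms $T_{x}C_{u}$ with $x\in X'$ and $xu\sq yv$; the latter forces $u\preceq' v$, and since $\fb$ is a left ideal of $W'$ and $v\in\fb$ this yields $u\in\fb$, so each term lies in $T_{x}\cH'_{\fb}\subseteq N$. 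For the reverse inclusion it is enough to check $T_{y}C_{v}\in\cH_{X'\fb}$ for $y\in X'$ and $v\in\fb$, since such $C_{v}$ span $\cH'_{\fb}$; here I would induct on $\ell(y)$, solving the identity of Proposition \ref{induction-st} for the leading term $T_{y}C_{v}$ (using $p^{*}_{yv,yv}=1$), noting that $C_{yv}\in\cH_{X'\fb}$ and that every lower term $T_{x}C_{u}$ has $x<y$, so $\ell(x)<\ell(y)$, and $u\in\fb$, hence lies in $\cH_{X'\fb}$ by induction.

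Granting $N=\cH_{X'\fb}$, the last step is routine: for $h\in\cH$ and a generator $T_{x}c$ of $N$ (with $x\in X'$, $c\in\cH'_{\fb}$), expand $hT_{x}=\sum_{x'\in X'}T_{x'}d_{x'}$ in the right $\cH'$-basis; then $hT_{x}c=\sum_{x'}T_{x'}(d_{x'}c)\in N$ because $\cH'_{\fb}$ absorbs left multiplication by $\cH'$. Thus $\cH_{X'\fb}=N$ is a left ideal of $\cH$, which by the first paragraph is precisely the assertion that $X'\fb$ is a left ideal of $W$ for $\leq_{\cL}$.

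I expect the main obstacle to be the identification $N=\cH_{X'\fb}$: it amounts to passing between the spanning set $\{T_{x}C_{u}\}$ of $N$ and the Kazhdan--Lusztig spanning set $\{C_{w}\}$ of $\cH_{X'\fb}$, which relies on the unitriangularity of the relative Kazhdan--Lusztig polynomials $p^{*}_{xu,yv}$ in Proposition \ref{induction-st} (so that the inversion in the induction is legitimate, with $\ell$ of the $X'$-component serving as a well-founded measure along $\sqs$), and, just as crucially, on the fact that the hypothesis that $\fb$ be a left ideal of $W'$ is exactly what converts ``$u\preceq' v$ and $v\in\fb$'' into ``$u\in\fb$''.
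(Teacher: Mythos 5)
Your proof is correct and is essentially the argument the paper points to: the corollary is quoted from Geck's work, where it is deduced from Proposition \ref{induction-st} in exactly this way --- identifying the $\cA$-span of $\{C_{w}\mid w\in X'\fb\}$ with $\bigoplus_{x\in X'}T_{x}\cH'_{\fb}$ via the unitriangularity of the $p^{*}_{xu,yv}$ (with the left-ideal hypothesis on $\fb$ converting $u\preceq' v$ into $u\in\fb$), and then using that $\cH$ is free as a right module over the parabolic subalgebra. No gaps to report.
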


\subsection{Generalised induction of Kazhdan-Lusztig cells}
\label{gid}
We now introduce the Generalised Induction Theorem. The idea is to generalise the construction above to some subsets of $W$ which may not be parabolic subgroups. We refer to \cite{jeju3,jeju4} for details.\\

We consider a subset $U\subseteq W$ and a collection $\{X_{u}\ |\ u\in U\}$ of subsets of $W$ satisfying the following conditions
\begin{enumerate}
\item[{\bf I1}.] for all $u\in U$, we have $e\in X_{u}$,
\item[{\bf I2}.] for all $u\in U$ and $x\in X_{u}$ we have $\ell(xu)=\ell(x)+\ell(u)$,
\item[{\bf I3}.] for all $u,v\in U$ such that $u\neq v$ we have $X_{u}u\cap X_{v}v=\emptyset$,
\item[{\bf I4}.] the submodule $\cM:=\sg T_{x}C_{u}|\ u\in U,\ x\in X_{u}\sd_{\cA}\subseteq \cH$ is a left ideal.
\end{enumerate}
One can easily see that the set $\cB:=\{T_{x}C_{u}|u\in U,x\in X_{u}\}$ is an $\cA$-basis of $\cM$. Thus for all $y\in W$ and all $v\in U$, we can write
$$T_{y}C_{v}=\sum_{u\in U,x\in X_{u}}a_{x,u}T_{x}C_{u}\quad \text{for some $a_{x,u}\in \cA$}.$$
Let $\preceq$ be the relation on $U$ defined as follows. Let $u,v\in U$. We write $u\preceq v$ if there exist $y\in W$ and $x\in X_{u}$ such that $T_{x}C_{u}$ appears with a non-zero coefficient in the expression of $T_{y}C_{v}$ in the basis $\cB$. We still denote by $\preceq$ the pre-order induced by this relation (i.e. the transitive closure). For $u,v\in U$, $x\in X_{u}$ and $y\in X_{v}$ we write $xu\sqs yv$ if $u\preceq v$ and $xu< yv$. We write $xu\sq yv$ if $xu\sqs yv$ or $x=y$ and $u=v$.
\begin{Prop}
(\cite[Proposition 3.8]{jeju3})
\label{Gind}
For any $v\in U$ and $y\in X_{u}$, there exist a unique family of polynomials $(p^{*}_{xu,yv})_{xu\sqs yv}$ in $\bA_{<0}$ such that 
$$\tC_{yv}:=T_{y}C_{v}+\ind{xu\sqs yv}{\sum_{u\in U, x\in X_{u}}}p^{*}_{xu,yv}T_{x}C_{u}$$
is stable under the $\bar{\ }$ involution. 
\end{Prop}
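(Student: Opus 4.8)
The plan is to mimic the classical Kazhdan--Lusztig existence-and-uniqueness argument (as in \cite[Theorem 5.2]{bible}) but working inside the left ideal $\cM$ with its $\cA$-basis $\cB = \{T_x C_u \mid u \in U,\, x \in X_u\}$ in place of the standard basis $\{T_w\}$ of $\cH$. First I would record how the bar involution acts on the basis $\cB$: since $\overline{C_u} = C_u$ and $\overline{T_x} = T_{x^{-1}}^{-1}$, and since $\cM$ is bar-stable (this needs a brief argument, or is part of {\bf I4} in practice --- one checks $\overline{\cM} = \cM$), we can expand $\overline{T_y C_v}$ back in the basis $\cB$. Writing
$$\overline{T_y C_v} = \sum_{u \in U,\, x \in X_u} r_{xu,yv}\, T_x C_u, \qquad r_{xu,yv} \in \cA,$$
the key structural facts to extract are: (i) the ``diagonal'' coefficient $r_{yv,yv} = 1$; (ii) $r_{xu,yv} = 0$ unless $xu \leq yv$ in the ordering $\preceq$ refined by Bruhat length --- more precisely unless $u \preceq v$ and $xu \leq yv$ in Bruhat order, or $xu = yv$; and (iii) bar-invariance is an involution, i.e. applying $\overline{\ \cdot\ }$ twice gives back the identity, which yields the quadratic relation $\sum_{x'u'} \overline{r_{xu,x'u'}}\, r_{x'u',yv} = \delta$. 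Fact (ii) is exactly what the preorder $\preceq$ and the relation $\sqsubset$ were set up to provide, via the Generalised Induction setup and Proposition~\ref{Gind}'s hypotheses; establishing it cleanly is where most of the work sits.

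Granting these, the construction of $\tC_{yv}$ and the uniqueness of the polynomials $p^*_{xu,yv}$ follow by the standard descending induction on the poset $(\{xu : u \in U, x \in X_u\}, \sqsubseteq)$. One sets up the ansatz
$$\tC_{yv} = T_y C_v + \ind{xu\sqs yv}{\sum_{u\in U, x\in X_u}} p^*_{xu,yv}\, T_x C_u$$
and imposes $\overline{\tC_{yv}} = \tC_{yv}$. Expanding the left side using the $r$-coefficients and comparing coefficients of each basis element $T_x C_u$ with $xu \sqsubset yv$, one obtains a recursion expressing $p^*_{xu,yv} - \overline{p^*_{xu,yv}}$ as a $\nZ[v,v^{-1}]$-combination of the $r_{x'u',yv}$ and of already-determined $p^*_{x'u',yv}$ with $xu \sqsubset x'u' \sqsubseteq yv$. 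The point is then the elementary lemma that for any $f \in \cA$ with $\bar f = -f$ plus the appropriate ``bar-skew'' correction term there is a \emph{unique} $p \in \bA_{<0} = v^{-1}\nZ[v^{-1}]$ with $p - \bar p$ equal to the prescribed element; uniqueness and existence both come from the direct sum decomposition $\cA = \bA_{<0} \oplus \nZ \oplus v\nZ[v]$ and the fact that $\overline{\bA_{<0}} = v\nZ[v]$. Feeding this in at each step of the induction (which terminates because the relevant set of $xu$ below a fixed $yv$ in Bruhat order is finite) produces the $p^*_{xu,yv}$ uniquely and makes $\tC_{yv}$ bar-stable.

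The main obstacle I expect is verifying the triangularity property (ii) --- that $\overline{T_y C_v}$, when re-expanded in $\cB$, only involves $T_x C_u$ with $u \preceq v$ (and with $xu < yv$ off the diagonal). For the classical result this is immediate from $\overline{T_w} = T_{w^{-1}}^{-1} \in T_w + \sum_{y<w}\cA T_y$; here one must instead (a) know that $\cM$ is bar-stable, and (b) control how $\overline{\phantom{x}}$ interacts with the (non-parabolic, combinatorially defined) decomposition $w = xu$, $x\in X_u$, $u\in U$. I would handle (b) by combining the right-handed induction formula of the Remark following Corollary~\ref{left-ideal} (or rather its analogue inside $\cM$) with the definition of $\preceq$ as the transitive closure of ``appears in $T_y C_v$'': since $\overline{T_y C_v}$ is itself an $\cA$-combination of $T_y C_v$'s for varying $y$ (because $T_{y^{-1}}^{-1}$ is a combination of $T_{y'}$'s with $y' \le y$, and multiplication on the left by $T_{y'}$ preserves $\cM$ and, by definition of $\preceq$, cannot introduce $u'\not\preceq v$), the desired containment is forced. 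The remaining length/Bruhat bookkeeping needed to get the strict inequality $xu < yv$ on the off-diagonal terms is then routine, using {\bf I2} ($\ell(xu)=\ell(x)+\ell(u)$) together with the analogous length-additivity that the ordering $\preceq$ is designed to be compatible with.
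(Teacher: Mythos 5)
The paper does not prove this statement itself --- it is quoted verbatim from \cite[Proposition 3.8]{jeju3} --- and your argument is exactly the standard Kazhdan--Lusztig existence-and-uniqueness scheme that the cited source uses: bar-stability of $\cM$, unitriangularity of $\overline{\phantom{x}}$ on the basis $\{T_xC_u\}$ with respect to $\sqsubseteq$ (the $u\preceq v$ part being tautological from the definition of $\preceq$, the $xu<yv$ part coming from the subword/length bookkeeping you defer), finiteness of Bruhat lower intervals, and the splitting $\cA=\cA_{<0}\oplus\nZ\oplus v\nZ[v]$ to solve $p-\bar p=f$ uniquely at each inductive step. This is correct and essentially identical to the original proof; the only step you leave compressed is the verification that every $T_xC_u$ occurring in $T_{y'}C_v$ for $y'<y$ has $xu<yv$ in Bruhat order (not merely in length), which requires the standard fact that $T_aT_b$ is supported on $T_w$ with $w$ below the Demazure product $a*b$, together with monotonicity of $*$ and {\bf I2}.
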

\noindent
Now if one assumes that
\begin{enumerate}
\item[{\bf I5}.] for all $v\in U$, $y\in X_{v}$ we have
$$T_{y}C_{v}\equiv T_{yv} \mod \cH_{<0} $$
\end{enumerate}
then we have $\tC_{yv}=C_{yv}$.
\begin{Rem}
This is really a generalisation of Geck' s result. If we set $U=W'$ and $X_{u}=X'$ for all $u\in U$ then condition {\bf I1}--{\bf I4} are satisfied. For condition {\bf I5} we have for $v\in W'$ and $y\in X'$:
\begin{align*}
\tilde{C}_{yv}&=T_{y}C_{v}+\ind{xu\sqs yv}{\sum_{u\in U, x\in X_{u}}}p^{*}_{xu,yv}T_{x}C_{u}\\
&=T_{y}\big( T_{v}+\sum_{v_{1}<v}P_{v_{1},v}T_{v_{1}} \big)+
\ind{xu\sqs yv}{\sum_{u\in U, x\in X_{u}}}p^{*}_{xu,yv}T_{x}\sum_{u_{1}\leq u}P_{u_{1},u}T_{u_{1}}\\ 
&=T_{yv}+\big(\sum_{v_{1}<v}P_{v_{1},v}T_{y}T_{v_{1}} \big)+
\ind{xu\sqs yv}{\sum_{u\in U, x\in X_{u}}}\sum_{u_{1}\leq u}p^{*}_{xu,yv}P_{u_{1},u}T_{x}T_{u_{1}}\\
&=T_{yv}+\big(\sum_{v_{1}<v}P_{v_{1},v}T_{yv_{1}} \big)+
\ind{xu\sqs yv}{\sum_{u\in U, x\in X_{u}}}\sum_{u_{1}\leq u}p^{*}_{xu,yv}P_{u_{1},u}T_{xu_{1}}\\
&\equiv T_{yv} \mod \cH_{<0}.
\end{align*}
Since $\tC_{yv}$ is stable under the involution  $\bar{\ }$ we get $\tC_{yv}=C_{yv}$.
\end{Rem}


\section{Main result}
Throughout this section, $W$ denotes an affine Weyl group of rank 2 and $L$ a generic weight function. We fix a two-sided cell $\Gamma$ of $W$ and wish to show that $\cM^{\cLR}_{\Gamma}$ is an affine cell ideal in $\cH/\cH_{<_{\cLR}\Gamma}$. In Sections \ref{description}--\ref{main-result}, we assume that, in the case where $W$ is of type $\tG_{2}$, the two-sided cell $\Gamma$ is either infinite or does not intersect the group generated by $s_{2},s_{3}$. The remaining cases will be treated in Section \ref{finite}.\\

\subsection{Description of $\Gamma$} 
\label{description}
In this section we present a very nice description of $\Gamma$. It is rather surprising that most of the two-sided cells in $W$ can be described in such a uniform way. Note that this description is vital in the proof of affine cellularity. We refer to the next section for examples of this description.\\
 
By inspection of the different partitions into cells given in \cite{comp,jeju4}, one can show  that there exist two subsets $\cT$ and $\cZ=\{z_{1}=e,z_{2},\ldots,z_{m}\}$ of $W$ and an element $w_{\Gamma}\in W$ in a parabolic subgroup $W'$ such that for all  $1\leq i,j\leq m$ and all $\tau\in\cT$ we have 
$$\ell(z_{i}^{-1}\tau w_{\Gamma}z_{k})=\ell(z_{i}^{-1})+\ell(\tau)+\ell(w_{\Gamma})+\ell(z_{j})$$
 and the following map is bijective
$$\begin{array}{ccccccc}
&\cZ\times \cT\times\cZ &\longrightarrow & \Gamma&\\
& (z_{i},\tau,z_{j})&\longmapsto& z_{i}^{-1}\tau w_{\Gamma}z_{j}&\in\Gamma_{i}^{-1}\cap \Gamma_{j}.
\end{array}$$
The left cells lying in $\Gamma$ are of the form 
$$\Gamma_{j}=\{z_{i}^{-1}\tau w_{\Gamma}z_{j}\mid \tau \in \cT, 1\leq i\leq m\}.$$
and we have
$$(\Gamma_{i})^{-1}\cap \Gamma_{j}=\{z_{i}^{-1}\tau w_{\Gamma}z_{j}\mid \tau\in\cT\}.$$
The set $\cT$ can be expressed in one of the following forms:
\begin{enumerate} \label{cases}
\item $\cT=\{t_{1}^{n}t_{2}^{m}\mid n,m\in\nN\}$
\item $\cT=\{t^{n}\mid n\in\nN\}$
\item $\cT=\{e,t\}$
\item $\cT=\{e\}$ 
\end{enumerate}
In case (1) we set $T=\{t_{1},t_{2}\}$, in case (2) and (3) we set $T=\{t\}$ and in case (4) we set $T=\{e\}$. In all cases it can be checked that for $\tau \in \cT$, we have
\begin{equation*}
\tau w_{\Gamma}=w_{\Gamma}\tau^{-1}.
\end{equation*}
It is clear that if $\Gamma$ is infinite, then we must be in case (1) or (2) and if $\Gamma$ is finite then we must be in case (3) or (4).
We refer to Section 5 for examples of the different cases: when $\Gamma=\tc_{0}$ we are in case (1), when $\Ga=\tc_{1}$ or $\tc_{2}$ we are in case (2), when $\Gamma=\tc_{3}$ we are in case (3) and when $\Ga=\tc_{4}$ we are in case (4).  
\begin{Rem}
This description is no longer true for all cells if the parameters are non-generic which is why in this paper we are only considering generic parameters.
\end{Rem}

\noindent
\subsection{The element $\bP$}
\label{Ppol}

Let $X'$ be the set of distinguished left coset representative of $W'$ in $W$ and 
let $\leq^{'}_{\cL}$ be the Kazhdan-Lusztig left preorder relation defined with respect to $W'$. Then explicit computations show that either
\begin{enumerate}
\item $w_{\Gamma}$ is the longest element in $W'$; 
\item $w_{\Gamma}=sw'$ where $w'$ is the longest element of $W'$, $w_{\Gamma}^{2}=1$
and   $\{w_{\Gamma},w'\}$ is a left ideal of $W'$ with respect to $\leq'_{\cL}$.
\end{enumerate}

\medskip

\emph{The element $\bP$ in Case (1).} Assume we are in Case (1). By well-known properties of the longest element in a Coxeter group, for all $u\in W'$ we have $u\leq'_{\cL} w_{\Gamma}$ implies that $u=w_{\Gamma}$. Further, one can easily check from the definition of $\sqs$ that this implies that if $w\in W$ satisfies $w\sq yw_{\Gamma}$ then $w=xw_{\Gamma}$ with $x<y$ and $x\in X'$.
Thus by Proposition \ref{induction-st} we get for all $y\in X'$
\begin{align*}
C_{yw_{\Gamma}}&=T_{y}C_{w_{\Gamma}}+\sum_{x<y, x\in X'} p^{*}_{xw_{\Gamma},yw_{\Gamma}}T_{z}C_{w_{\Gamma}}\\
&=\big( T_{y}+\sum_{x<y, x\in X'} p^{*}_{xw_{\Gamma},yw_{\Gamma}}T_{z}\big)C_{w_{\Gamma}}.
\end{align*}
We set for $y\in\cZ^{-1}\cup T$
$$\bP(y)=\sum_{x\leq y, x\in X'}p^{\ast}_{xw_{\Gamma},yw_{\Gamma}}T_{x},\quad \bPr(y^{-1})=(\bP(y))^{\flat},$$
so that we have 
$$\bP(y)C_{w_{\Gamma}}=C_{yw_{\Gamma}}\text{ and } C_{w_{\Gamma}}\bPr(y^{-1})=C_{w_{\Gamma}y^{-1}}.$$
The last equality holds since
$$C_{w_{\Gamma}}\bPr(y^{-1})=C_{w_{\Gamma}}(\bP(y))^{\flat}=(\bP(y)C_{w_{\Gamma}})^{\flat}=(C_{yw_{\Gamma}})^{\flat}=C_{w_{\Gamma}^{-1}y^{-1}}=C_{w_{\Gamma}y^{-1}}.$$

\medskip

\emph{The element $\bP$ in Case (2).}  Assume that we are in Case (2). We set $U=\{w_{\Gamma}\}$ and $X=X'\cup X's$. 

\emph{Claim.} The set $U$ together with $X$ satisfy condition {\bf I1}--{\bf I5}.

 Assume for now that it is the case, then for all $y\in X$ we have (note that in the sum below the element $x$ is chosen in $X$ and not in $X'$ as above!)
\begin{align*}
C_{yw_{\Gamma}}&=T_{y}C_{w_{\Gamma}}+\sum_{x<y, x\in X} p^{*}_{xw_{\Gamma},yw_{\Gamma}}T_{x}C_{w_{\Gamma}}\\
&=\big( T_{y}+\sum_{x<y, x\in X_{c}} p^{*}_{xw_{\Gamma},yw_{\Gamma}}T_{z}\big)C_{w_{\Gamma}}.
\end{align*}
For $y\in\cZ^{-1}\cup T$ we set
$$\bP(y)=\sum_{x\leq y, x\in X}p^{\ast}_{xw_{\Gamma},yw_{\Gamma}}T_{x},\quad \bPr(y^{-1})=(\bP(y))^{\flat},$$
so that we have
$$\bP(y)C_{w_{\Gamma}}=C_{yw_{\Gamma}}\text{ and } C_{w_{\Gamma}}\bPr(y^{-1})=C_{w_{\Gamma}y^{-1}}.$$
The last equality holds since 
$$C_{w_{\Gamma}}\bPr(y^{-1})=C_{w_{\Gamma}}(\bP(y))^{\flat}=(\bP(y)C_{w_{\Gamma}})^{\flat}=(C_{yw_{\Gamma}})^{\flat}=C_{w_{\Gamma}^{-1}y^{-1}}=C_{w_{\Gamma}y^{-1}}$$ (here we need the fact that $w_{\Gamma}^{2}=1$).

\emph{Proof of Claim.} In order to verify that $U=\{w_{\Gamma}\}$ together with  $X=X'\cup X's$ satisfies condition {\bf I1}--{\bf I5}, note that $Xw_{\Gamma}=X'w'\cup X'w_{\Gamma}$, from where 
conditions {\bf I1}--{\bf I3} follow easily. Next we know that $\{w',w_{\Gamma}\}$ is a left ideal of $W'$, thus by Corollary \ref{left-ideal} we get that 
$$\fB=\sg C_{w}\mid w\in X'w_{\Gamma}\cup X'w'\sd_{\cA}=\sg C_{xw_{\Gamma}}\mid x\in X\sd$$ is a left ideal of $\cH$. Since 
$$T_{x}C_{w_{\Gamma}}=C_{xw_{\Gamma}}+\sum_{z<xw_{\Gamma},z\in \fB} C_{z},$$
it follows that $\fB=\sg T_{x}C_{w_{\Gamma}}\mid x\in X\sd$ and {\bf I4} follows. Let $x\in X$. If $x\in X'$ Condition {\bf I5} is clearly satisfied since $w_{\Gamma}\in W'$. So assume that $x\in X's$, that is $x=x's$ for some $x'\in X'$. Then we have
\begin{align*}
T_{x}C_{w_{\Gamma}}&=T_{x'}T_{s}C_{w_{\Gamma}}\\
&=T_{x'}(C_{w'}-v^{-L(s)}C_{w_{\Gamma}})\\
&=T_{x'}C_{w'}-v^{-L(s)}T_{x'}C_{w_{\Gamma}}.
\end{align*}  
The claim follows since $T_{x'}C_{w'}\equiv T_{x'w'} \mod \cH_{<0}$ and $T_{x'}C_{w_{\Gamma}}\equiv T_{x'w_{\Gamma}} \mod \cH_{<0}$.

\bigskip

Finally, in both Cases (1) and (2), we set
$$\begin{array}{rllc}
\bP(t_{1}^{m}t_{2}^{n})&=\bP(t_{1})^{m}\bP(t_{2})^{n}&\mbox{if $T=\{t_{1},t_{2}\}$}\\
\bP(t^{n})&=\bP(t)^{n}& \mbox{otherwise}.
\end{array}$$


\subsection{Properties of the element $\bP$}
\label{properties}
Recall that $\cM^{\cLR}_{\Gamma}$ can be viewed as an algebra and that the $\cA$-modules
$$\sg [C_{w}]\mid w\in \Ga_{j}\sd_{\cA}$$ 
are left ideals in $\cM^{\cLR}_{\Gamma}$ (see Section \ref{cell-mod}).  

\begin{Lem}
\label{left-right}
Let $\tau\in \cT$. We have
$$\bP(\tau)C_{w_{\Gamma}}=C_{w_{\Gamma}}\bP_{R}(\tau^{-1}).$$
\end{Lem}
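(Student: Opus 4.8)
The plan is to establish the identity by computing both sides in $\cH$ using the defining properties of $\bP$ and the key length-additivity plus commutation relations from Section~\ref{description}. Recall from that section that $\tau w_\Gamma = w_\Gamma \tau^{-1}$ for all $\tau \in \cT$, and that $\ell(z_i^{-1}\tau w_\Gamma z_j) = \ell(z_i^{-1}) + \ell(\tau) + \ell(w_\Gamma) + \ell(z_j)$; in particular, taking $z_i = z_j = e$, we get $\ell(\tau w_\Gamma) = \ell(\tau) + \ell(w_\Gamma)$. The definition of $\bP$ gives $\bP(\tau)C_{w_\Gamma} = C_{\tau w_\Gamma}$ and $C_{w_\Gamma}\bP_R(\tau^{-1}) = C_{w_\Gamma \tau^{-1}}$, so the statement is equivalent to the bare identity $C_{\tau w_\Gamma} = C_{w_\Gamma \tau^{-1}}$, which by $\tau w_\Gamma = w_\Gamma \tau^{-1}$ is just $C_{\tau w_\Gamma} = C_{\tau w_\Gamma}$.

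So first I would reduce to showing that $\bP(\tau)C_{w_\Gamma} = C_{\tau w_\Gamma}$ and $C_{w_\Gamma}\bP_R(\tau^{-1}) = C_{w_\Gamma\tau^{-1}}$ hold for every $\tau \in \cT$, not merely for the generators $\tau \in T$ (where they hold by construction of $\bP$ in Cases (1) and (2) of Section~\ref{Ppol}). For $\tau = t_1^m t_2^n$ (or $\tau = t^n$), $\bP(\tau)$ is defined to be $\bP(t_1)^m \bP(t_2)^n$ (resp. $\bP(t)^n$), so I need to show that iterated application of the generator identities reproduces $C_{\tau w_\Gamma}$. Concretely, using $\bP(t_i)C_{w_\Gamma} = C_{t_i w_\Gamma}$ repeatedly, I would argue by induction on $m+n$ (resp. $n$): the point is that $\bP(t_i)C_{u w_\Gamma} = C_{t_i u w_\Gamma}$ whenever $u$ is a product of $t_1, t_2$'s with $\ell(t_i u w_\Gamma) = \ell(t_i) + \ell(u w_\Gamma)$, which follows because $\bP(t_i)$ is the operator realizing Geck-style induction on the distinguished-coset side and the length-additivity $\ell(z_i^{-1}\tau w_\Gamma z_j) = \ell(z_i^{-1}) + \ell(\tau) + \ell(w_\Gamma) + \ell(z_j)$ guarantees that the reduced expressions concatenate without collapse. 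The right-handed identity $C_{w_\Gamma}\bP_R(\tau^{-1}) = C_{w_\Gamma\tau^{-1}}$ follows by applying $\flat$ to the left-handed one, exactly as in the displayed computations of Section~\ref{Ppol}, using $w_\Gamma^2 = 1$ in Case (2) (and that $w_\Gamma$ is the longest element, hence an involution, in Case (1)).

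With both reductions in hand, I would conclude: $\bP(\tau)C_{w_\Gamma} = C_{\tau w_\Gamma}$ and $C_{w_\Gamma}\bP_R(\tau^{-1}) = C_{w_\Gamma\tau^{-1}}$, and since $\tau w_\Gamma = w_\Gamma\tau^{-1}$ in $W$, the two Kazhdan--Lusztig basis elements coincide, giving $\bP(\tau)C_{w_\Gamma} = C_{w_\Gamma}\bP_R(\tau^{-1})$.

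The main obstacle I anticipate is the inductive step showing $\bP(\tau)C_{w_\Gamma} = C_{\tau w_\Gamma}$ for non-generator $\tau$: one must verify that multiplying $C_{u w_\Gamma}$ by $\bP(t_i)$ on the left stays within the cell-theoretic framework (i.e. that $t_i u \in \cT$, that the relevant coset-representative structure is preserved, and that no lower-order Kazhdan--Lusztig correction terms appear that would spoil the clean equality). This is where the precise form of the description of $\Gamma$ in Section~\ref{description} — in particular the bijectivity of $(z_i,\tau,z_j)\mapsto z_i^{-1}\tau w_\Gamma z_j$ and the uniform length-additivity — does the real work, ensuring the $T_x C_u$-expansions telescope exactly. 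The commutation relation $\tau w_\Gamma = w_\Gamma\tau^{-1}$ then supplies the final bridge between the left and right computations.
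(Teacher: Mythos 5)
Your proposal follows essentially the same route as the paper: the paper's entire proof is the one-line chain $\bP(\tau)C_{w_{\Gamma}}=C_{\tau w_{\Gamma}}=C_{w_{\Gamma}\tau^{-1}}=C_{w_{\Gamma}}\bP_{R}(\tau^{-1})$, resting on the commutation $\tau w_{\Gamma}=w_{\Gamma}\tau^{-1}$ exactly as you reduce it. The point you single out as the main obstacle --- extending $\bP(\tau)C_{w_{\Gamma}}=C_{\tau w_{\Gamma}}$ from the generators in $T$ to arbitrary monomials $\tau\in\cT$, given that $\bP(t_1^mt_2^n)$ is \emph{defined} as $\bP(t_1)^m\bP(t_2)^n$ --- is simply taken for granted in the paper (which elsewhere, e.g.\ in Lemma \ref{basis-tau}, only invokes the weaker triangular form $\bP(\tau)C_{w_{\Gamma}}=C_{\tau w_{\Gamma}}+\sum_{z<\tau w_{\Gamma}}\cA C_{z}$), so your sketched induction is a refinement of, not a departure from, the paper's argument.
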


\begin{proof}
Using the fact that for all $\tau\in\cT$ we have $\tau w_{\Ga}=w_{\Ga}\tau^{-1}$ we get
$$\bP(\tau)C_{w_{\Gamma}}=C_{\tau w_{\Gamma}}=C_{w_{\Gamma}\tau^{-1}}=C_{w_{\Gamma}}\bP_{R}(\tau^{-1}).$$
\end{proof}

We define the following $\cA$-submodule of  $\cM^{\cLR}_{\Gamma}$:
$$\cM_{\cT}=\sg [C_{\tau w_{\Gamma}}]\mid \tau\in\cT\sd_{\cA}.$$

\begin{Lem}
\label{mod-tau}
We have
$$\sg [C_{w}]\mid w\in\Gamma_{1}\sd_{\cA}\cap \sg [C_{w}]\mid w\in (\Gamma_{1})^{-1}\sd_{\cA}=\cM_{\cT}$$
\end{Lem}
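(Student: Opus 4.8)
The plan is to reduce Lemma \ref{mod-tau} to a purely combinatorial identity inside $W$. Since $\cM^{\cLR}_{\Gamma}$ is free as an $\cA$-module on the basis $\{[C_{w}]\mid w\in\Gamma\}$, for any two subsets $A,B\subseteq\Gamma$ one has $\langle [C_{w}]\mid w\in A\rangle_{\cA}\cap\langle [C_{w}]\mid w\in B\rangle_{\cA}=\langle [C_{w}]\mid w\in A\cap B\rangle_{\cA}$, simply by comparing coordinates in this basis. Applying this with $A=\Gamma_{1}$ and $B=(\Gamma_{1})^{-1}$, the statement reduces to proving the set-theoretic equality $\Gamma_{1}\cap(\Gamma_{1})^{-1}=\{\tau w_{\Gamma}\mid\tau\in\cT\}$.

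I would then make both sets explicit from the uniform description in Section \ref{description}. Using $z_{1}=e$, the formula $\Gamma_{j}=\{z_{i}^{-1}\tau w_{\Gamma}z_{j}\mid\tau\in\cT,\ 1\leq i\leq m\}$ gives $\Gamma_{1}=\{z_{i}^{-1}\tau w_{\Gamma}\mid\tau\in\cT,\ 1\leq i\leq m\}$. For the inverse cell, since $(\Gamma_{1})^{-1}\subseteq\Gamma^{-1}=\Gamma=\bigcup_{j=1}^{m}\Gamma_{j}$, I would decompose $(\Gamma_{1})^{-1}=\bigcup_{j=1}^{m}\bigl((\Gamma_{1})^{-1}\cap\Gamma_{j}\bigr)$ and use the identity $(\Gamma_{i})^{-1}\cap\Gamma_{j}=\{z_{i}^{-1}\tau w_{\Gamma}z_{j}\mid\tau\in\cT\}$ with $i=1$ to obtain $(\Gamma_{1})^{-1}=\{\tau w_{\Gamma}z_{j}\mid\tau\in\cT,\ 1\leq j\leq m\}$.

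With both descriptions in hand, the inclusion ``$\supseteq$'' is immediate, since for $\tau\in\cT$ the element $\tau w_{\Gamma}=z_{1}^{-1}\tau w_{\Gamma}=\tau w_{\Gamma}z_{1}$ lies in both $\Gamma_{1}$ and $(\Gamma_{1})^{-1}$. For ``$\subseteq$'', I would take $w$ in the intersection, write it both as $z_{i}^{-1}\tau w_{\Gamma}z_{1}$ (from $w\in\Gamma_{1}$) and as $z_{1}^{-1}\tau' w_{\Gamma}z_{j}$ (from $w\in(\Gamma_{1})^{-1}$), and then invoke injectivity of the bijection $\cZ\times\cT\times\cZ\to\Gamma$, $(z_{k},\rho,z_{l})\mapsto z_{k}^{-1}\rho w_{\Gamma}z_{l}$: the triples $(z_{i},\tau,z_{1})$ and $(z_{1},\tau',z_{j})$ must coincide, so in particular $z_{i}=z_{1}=e$ and $\tau=\tau'$, whence $w=\tau w_{\Gamma}$ with $\tau\in\cT$.

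I do not anticipate a serious obstacle here: the argument is entirely bookkeeping with the description of $\Gamma$ from Section \ref{description}. The two points that deserve a little care are (i) being consistent about the left-cell versus right-cell decompositions of $\Gamma$ when unravelling $(\Gamma_{1})^{-1}$ (this is where $\Gamma^{-1}=\Gamma$ and the fact that the inversion map on $\Gamma$ swaps the two $\cZ$-factors of the parametrisation are used), and (ii) noting that it is precisely the freeness of $\cM^{\cLR}_{\Gamma}$ as an $\cA$-module that legitimises computing the intersection of the two submodules on the level of their index subsets of $\Gamma$.
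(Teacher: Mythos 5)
Your proof is correct and takes essentially the same route as the paper: both reduce the lemma to the set-theoretic identity $\Gamma_{1}\cap(\Gamma_{1})^{-1}=\{\tau w_{\Gamma}\mid\tau\in\cT\}$ (which the paper quotes directly from the description in Section 4.1, and which you re-derive via injectivity of the parametrisation $\cZ\times\cT\times\cZ\to\Gamma$), combined with the fact that both submodules are spanned by subsets of the basis $\{[C_{w}]\mid w\in\Gamma\}$ of the free $\cA$-module $\cM^{\cLR}_{\Gamma}$. The only cosmetic difference is that the paper phrases the support argument via the left/right ideal structure, whereas you invoke freeness directly.
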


\begin{proof}
This follows directly from the fact that $\sg [C_{w}]\mid w\in\Gamma_{1}\sd_{\cA}$ (respectively \\ $\sg [C_{w}]\mid w\in (\Gamma_{1})^{-1}\sd_{\cA}$) is a left ideal of $\cM^{\cLR}_{\Gamma}$ (respectively a right ideal of $\cM^{\cLR}_{\Gamma}$) and the equality
$$(\Gamma_{1})^{-1}\cap \Gamma_{1}=\{\tau w_{\Gamma}\mid \tau\in\cT\}.$$
\end{proof}

\begin{Lem}
\label{basis-tau}
The set $\{[\bP(\tau)C_{w_{\Gamma}}]\mid \tau\in\cT\}$ is an $\cA$-basis of $\cM_{\cT}$.
\end{Lem}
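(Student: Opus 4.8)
The plan is to exhibit a unitriangular change of basis between $\{[\bP(\tau)C_{w_{\Gamma}}]\mid\tau\in\cT\}$ and the already-known $\cA$-basis $\{[C_{\tau w_{\Gamma}}]\mid\tau\in\cT\}$ of $\cM_{\cT}$, the order being any total order on $\cT$ refining the order by length $\ell(\tau)$. (That $\{[C_{\tau w_{\Gamma}}]\mid\tau\in\cT\}$ is an $\cA$-basis of $\cM_{\cT}$ is immediate: the $[C_{w}]$, $w\in\Gamma$, form an $\cA$-basis of $\cM^{\cLR}_{\Gamma}$, and $\tau\mapsto\tau w_{\Gamma}$ is injective on $\cT$ by the bijectivity statement of Section \ref{description}.) Since the resulting transition matrix is column-finite and unitriangular over $\cA$, it is invertible — one recovers each $[C_{\tau w_{\Gamma}}]$ by induction on $\ell(\tau)$, using that $\{\tau\in\cT\mid\ell(\tau)\le N\}$ is finite for every $N$ — so both sets are $\cA$-bases of the same submodule.

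First I would check that $[\bP(\tau)C_{w_{\Gamma}}]\in\cM_{\cT}$ for all $\tau\in\cT$. Since $w_{\Gamma}\in\Gamma_{1}$ we have $[C_{w_{\Gamma}}]\in\sg[C_{w}]\mid w\in\Gamma_{1}\sd_{\cA}$, and under Lusztig's conjecture $(\ast)$ — which holds in the present setting by the remark in Section \ref{cell-mod} — this $\cA$-submodule is a left $\cH$-submodule of $\cM^{\cLR}_{\Gamma}$; hence $[\bP(\tau)C_{w_{\Gamma}}]=\bP(\tau)\cdot[C_{w_{\Gamma}}]$ lies in it. Applying the antiautomorphism $\flat$, the submodule $\sg[C_{w}]\mid w\in\Gamma_{1}^{-1}\sd_{\cA}$ is a right $\cH$-submodule of $\cM^{\cLR}_{\Gamma}$ and contains $[C_{w_{\Gamma}}]$ because $w_{\Gamma}\in\Gamma_{1}^{-1}$ (again by Section \ref{description}); by Lemma \ref{left-right}, $[\bP(\tau)C_{w_{\Gamma}}]=[C_{w_{\Gamma}}]\cdot\bPr(\tau^{-1})$ lies in it as well. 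By Lemma \ref{mod-tau} these two submodules intersect in exactly $\cM_{\cT}$, which gives the membership.

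Next I would compute the leading term. By construction $\bP(\tau)=\bP(t_{1})^{m}\bP(t_{2})^{n}$ (or its degenerate analogue in cases (2)--(4)), and each $\bP(t_{i})=T_{t_{i}}+\sum_{x<t_{i}}p^{*}_{xw_{\Gamma},t_{i}w_{\Gamma}}T_{x}$ is $T_{t_{i}}$ plus an $\cA$-combination of $T_{x}$ with $\ell(x)<\ell(t_{i})$. Multiplying these factors together with $C_{w_{\Gamma}}=T_{w_{\Gamma}}+\sum_{y<w_{\Gamma}}P_{y,w_{\Gamma}}T_{y}$, using that multiplication in $\cH$ never increases length and that the explicit descriptions of Section \ref{description} make $t_{1}^{m}t_{2}^{n}w_{\Gamma}$ a reduced factorisation, one gets $\bP(\tau)C_{w_{\Gamma}}=T_{\tau w_{\Gamma}}+\sum_{\ell(w)<\ell(\tau w_{\Gamma})}b_{w}T_{w}$ with $b_{w}\in\cA$. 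Converting to the Kazhdan-Lusztig basis via the unitriangularity of the $P_{y,w}$ in the Bruhat order and projecting to $\cM^{\cLR}_{\Gamma}$ yields $[\bP(\tau)C_{w_{\Gamma}}]=[C_{\tau w_{\Gamma}}]+\sum_{w\in\Gamma,\ \ell(w)<\ell(\tau w_{\Gamma})}c_{w}[C_{w}]$. Combining with the previous paragraph, every term on the right lies in $\cM_{\cT}$, so $c_{w}=0$ unless $w=\tau'w_{\Gamma}$ for some $\tau'\in\cT$, in which case $\ell(w)<\ell(\tau w_{\Gamma})$ forces $\ell(\tau')<\ell(\tau)$ by the internal length-additivity of $\cT$; thus
$$[\bP(\tau)C_{w_{\Gamma}}]=[C_{\tau w_{\Gamma}}]+\sum_{\tau'\in\cT,\ \ell(\tau')<\ell(\tau)}c_{\tau'}[C_{\tau'w_{\Gamma}}],\qquad c_{\tau'}\in\cA,$$
which is the required unitriangularity; together with the first paragraph the lemma follows.

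I expect the main obstacle to be this last computation: across the four possible shapes of $\cT$, one must be sure that the relevant words ($t_{1}^{m}t_{2}^{n}w_{\Gamma}$, $t^{n}w_{\Gamma}$, and so on) are reduced, so that $\bP(\tau)C_{w_{\Gamma}}$ genuinely has the single leading standard-basis term $T_{\tau w_{\Gamma}}$. This rests on the explicit, case-by-case length information collected in Section \ref{description}, in particular the identity $\ell(z_{i}^{-1}\tau w_{\Gamma}z_{j})=\ell(z_{i}^{-1})+\ell(\tau)+\ell(w_{\Gamma})+\ell(z_{j})$ and the analogous additivity internal to $\cT$.
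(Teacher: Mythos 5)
Your proposal is correct and follows essentially the same route as the paper: membership in $\cM_{\cT}$ via Lemma \ref{left-right} combined with Lemma \ref{mod-tau}, and then unitriangularity of $[\bP(\tau)C_{w_{\Gamma}}]$ against the basis $\{[C_{\tau w_{\Gamma}}]\}$ (the paper states the triangularity $\bP(\tau)C_{w_{\Gamma}}=C_{\tau w_{\Gamma}}+\sum_{z<\tau w_{\Gamma}}\cA C_{z}$ and leaves the rest to the reader, whereas you spell out the leading-term computation and the induction). The extra details you supply, in particular the reliance on length-additivity inside $\cT$, are exactly the points the paper's ``follows easily'' glosses over.
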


\begin{proof}
Since $\bP(\tau)C_{w_{\Gamma}}=C_{w_{\Gamma}}\bP_{R}(\tau^{-1})$ and 
$$\bP(\tau)C_{w_{\Gamma}}\in \cH C_{w_{\Gamma}}\subseteq \sum_{z\leq_{\cL} w_{\Ga}} \cA C_{z}\text{ and } C_{w_{\Gamma}}\bP_{R}(\tau^{-1})\in C_{w_{\Gamma}}\cH\subseteq \sum_{z\leq_{\cR} w_{\Ga}}\cA C_{z}$$
we have $[\bP(\tau)C_{w_{\Gamma}}]\in \cM_{\cT}$ by the previous lemma.  Then the result follows easily from the fact that
$$\bP(\tau)C_{w_{\Gamma}}=C_{\tau w_{\Gamma}}+\sum_{z<\tau w_{\Gamma}} \cA C_{z}.$$
\end{proof}

\begin{Lem}
\label{basis-m1}
The set  $\{[\bP(z_{j}^{-1})\bP(\tau)C_{w_{\Gamma}}]\mid 1\leq j\leq m, \tau\in\cT\}$ is an $\cA$-basis of $\cM^{\cL}_{\Gamma_{1}}$.
\end{Lem}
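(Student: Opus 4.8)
The plan is to exploit the structure established in Section \ref{description} together with the description of $\Gamma_{1}$ as the set $\{z_{i}^{-1}\tau w_{\Gamma}z_{1}\mid \tau\in\cT,\ 1\le i\le m\}=\{z_{i}^{-1}\tau w_{\Gamma}\mid \tau\in\cT,\ 1\le i\le m\}$ (recall $z_{1}=e$), so that $\cM^{\cL}_{\Gamma_{1}}$ is free over $\cA$ with basis $\{[C_{z_{i}^{-1}\tau w_{\Gamma}}]\mid \tau\in\cT,\ 1\le i\le m\}$, a set of the right cardinality $m\cdot|\cT|$. Since the proposed set $\{[\bP(z_{j}^{-1})\bP(\tau)C_{w_{\Gamma}}]\}$ also has $m\cdot|\cT|$ elements, it suffices to show it lies in $\cM^{\cL}_{\Gamma_{1}}$ and is unitriangular with respect to the known basis under a suitable order.

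First I would check membership: by Lemma \ref{basis-tau} we have $[\bP(\tau)C_{w_{\Gamma}}]\in\cM_{\cT}\subseteq\cM^{\cL}_{\Gamma_{1}}$, and since $\cM^{\cL}_{\Gamma_{1}}$ is a left ideal of $\cM^{\cLR}_{\Gamma}$ (Section \ref{cell-mod}, using $(\ast)$) and $\bP(z_{j}^{-1})\in\cH$ (it is a polynomial in the $T_{x}$'s), the element $[\bP(z_{j}^{-1})\bP(\tau)C_{w_{\Gamma}}]$ lies in $\cM^{\cL}_{\Gamma_{1}}$. Next, the key computation: in Case (1) we have $\bP(y)C_{w_{\Gamma}}=C_{yw_{\Gamma}}$ for $y\in\cZ^{-1}\cup T$ (and likewise in Case (2)), so $\bP(z_{j}^{-1})\bP(\tau)C_{w_{\Gamma}}=\bP(z_{j}^{-1})C_{\tau w_{\Gamma}}$. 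I would then argue, using Proposition \ref{induction-st} (or Proposition \ref{Gind} in Case (2)) applied to $z_{j}^{-1}\in X'$ and $\tau w_{\Gamma}$, together with the length-additivity $\ell(z_{j}^{-1}\tau w_{\Gamma})=\ell(z_{j}^{-1})+\ell(\tau w_{\Gamma})$ guaranteed by the description in Section \ref{description}, that
$$\bP(z_{j}^{-1})C_{\tau w_{\Gamma}}=C_{z_{j}^{-1}\tau w_{\Gamma}}+\sum_{w<z_{j}^{-1}\tau w_{\Gamma}}a_{w}C_{w},\qquad a_{w}\in\cA.$$
Passing to $\cM^{\cLR}_{\Gamma}$ via $[\ \cdot\ ]$, all terms $[C_{w}]$ with $w\notin\Gamma$ vanish, and one is left with $[C_{z_{j}^{-1}\tau w_{\Gamma}}]$ plus an $\cA$-combination of $[C_{w}]$ for $w\in\Gamma$, $w<z_{j}^{-1}\tau w_{\Gamma}$. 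Ordering the basis elements of $\cM^{\cL}_{\Gamma_{1}}$ compatibly with the Bruhat order on their indices $z_{j}^{-1}\tau w_{\Gamma}$, this exhibits the transition matrix from $\{[\bP(z_{j}^{-1})\bP(\tau)C_{w_{\Gamma}}]\}$ to $\{[C_{z_{j}^{-1}\tau w_{\Gamma}}]\}$ as unitriangular, hence invertible over $\cA$; so the former set is also a basis.

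The main obstacle is controlling which elements $w$ with $w<z_{j}^{-1}\tau w_{\Gamma}$ actually survive in $\cM^{\cLR}_{\Gamma}$, i.e. which lie in $\Gamma$, and confirming they are Bruhat-smaller among the indices of the $\Gamma_{1}$-basis so that the triangularity claim is genuine rather than circular. For this I would lean on the precise combinatorics of Section \ref{description}: every element of $\Gamma$ is uniquely $z_{i}^{-1}\sigma w_{\Gamma}z_{k}$, and membership in $\Gamma_{1}$ forces $k=1$; combined with the length-additive factorisations this pins down the surviving terms and their position in the Bruhat order relative to $z_{j}^{-1}\tau w_{\Gamma}$. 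A secondary technical point is that in Case (2) one must use the generalised induction polynomials $p^{*}_{xu,yv}$ from Proposition \ref{Gind} with $U=\{w_{\Gamma}\}$ and $X=X'\cup X's$, which we verified satisfies {\bf I1}--{\bf I5}; the argument is otherwise identical.
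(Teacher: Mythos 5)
Your proposal is correct and follows essentially the same route as the paper: show the elements lie in $\cM^{\cL}_{\Gamma_{1}}$ (the paper does this via $\bP(z_{j}^{-1})\bP(\tau)C_{w_{\Gamma}}\in\cH C_{w_{\Gamma}}\subseteq\sum_{z\leq_{\cL}w_{\Gamma}}\cA C_{z}$, you via the left $\cH$-submodule property of $\cM^{\cL}_{\Gamma_{1}}$ under $(\ast)$ -- the same underlying fact), and then deduce the basis claim from the unitriangular expansion $\bP(z_{j}^{-1})\bP(\tau)C_{w_{\Gamma}}=C_{z_{j}^{-1}\tau w_{\Gamma}}+\sum_{z<z_{j}^{-1}\tau w_{\Gamma}}\cA C_{z}$ against the standard basis $\{[C_{w}]\mid w\in\Gamma_{1}\}$. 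Your extra care about which lower-order terms survive the projection is not actually needed for invertibility of the transition matrix, but it does no harm; the paper simply asserts the triangular expansion where you derive it from Propositions \ref{induction-st} and \ref{Gind}.
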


\begin{proof}
Since 
$$\bP(z_{j}^{-1})\bP(\tau)C_{w_{\Gamma}}\in \cH C_{w_{\Gamma}}\subseteq \sum_{z\leq_{\cL} w_{\Ga}}\cA C_{z}$$
we have $[\bP(z_{j}^{-1})\bP(\tau)C_{w_{\Gamma}}]\in\cM^{\cL}_{\Ga_{1}}$. Then the results follows easily from the fact that
$$\bP(z_{j}^{-1})\bP(\tau)C_{w_{\Gamma}}=C_{z_{i}^{-1}\tau w_{\Gamma}}+\sum_{z<z_{i}^{-1}\tau w_{\Gamma}}C_{z}.$$
\end{proof}

\subsection{Main result}
\label{main-result}

We are now ready to define the different ingredients needed in order to show that each cell ideal is affine cellular. Recall the definition of $T$ and $\cT$ in Section \ref{description}. As our principal ideal domain $k$, we choose $\cA$. We set 
$$B=
\begin{cases}
\cA[t_{1},t_{2}] &\mbox{if $T=\{t_{1},t_{2}\}$}\\
\cA[t]&\mbox{if $T=\{t\}\qquad$}\\
\cA[t]/(t^{2}-1)&\mbox{if $T=\{e,t\}$}\\
\cA&\mbox{if $T=\{e\}$.}
\end{cases}$$

Note that the monomials in $B$ corresponds to the elements of $\cT$ and we will use this identification freely. \\
For all $1\leq i,j\leq m$ we know, by Lemma \ref{mod-tau}, that
$$[C_{w_{\Gamma}z_{j}}][C_{z_{i}^{-1}w_{\Gamma}}]\in\cM_{\cT},$$ 
thus, by Lemma \ref{basis-tau}, we have
$$[C_{w_{\Gamma}z_{j}}][C_{z_{k}^{-1}w_{\Gamma}}]=\sum_{\tau \in\cT} a^{j,k}_{\tau}[\bP(\tau)C_{w_{\Gamma}}]\text{ where $a^{j,k}_{\tau}\in \cA$}.$$
Let $V$ be the free $\cA$-module of rank $m$ on basis $v_1, \dots , v_m$ and define the $\cA$-bilinear form $\varphi$ by
$$\begin{array}{ccccccc}
\varphi:&V\times V &\longrightarrow & B\\
& (v_{j},v_{k})&\longmapsto& \underset{\tau\in \cT}{\sum}a^{j,k}_{\tau}\tau.
\end{array}$$

This defines an algebra $\mathbb{A}(V,B,\varphi)\cong V \otimes_{\cA} B\otimes_{\cA} V$ with multiplication $\cA$-bilinearly extended from $(v_i \otimes \tau \otimes v_j)(v_k \otimes \tau' \otimes v_l) = v_i \otimes \tau \varphi(v_j,v_k)  \tau' \otimes v_l$  as in Section \ref{affcell}.

We now define a map $$\tilde\Phi: \mathbb{A}(V,B,\varphi) \ra \cH$$ by $$v_i \otimes \tau \otimes v_j \mapsto \bP(z_{i}^{-1})\bP(\tau)C_{w_{\Gamma}}\bP_{R}(z_{j})$$
for basis elements $v_i,v_j $ of V and $\tau \in \cT$.

We have
$$\bP(z_{i}^{-1})\bP(\tau)C_{w_{\Gamma}}\bP_{R}(z_{j})\in \cH C_{w_{\Gamma}}\cH\subseteq \sum_{z\leq_{\cLR} w_{\Gamma}} \cA C_{z}.$$
Hence, the image of $\tilde \Phi$ is contained in $\cH_{\leq_{\cLR} \Gamma}$ and we can compose $\tilde{\Phi}$ with the natural projection $[\ .\ ]$ onto $\cM^{\cLR}_{\Gamma}$. We obtain a map
$$\begin{array}{ccccccc}
\Phi:&\mathbb{A}(V,B,\varphi) &\longrightarrow & \cM^{\cLR}_{\Gamma}\\
& v_{i}\otimes \tau \otimes v_{j}&\longmapsto& [\bP(z_{i}^{-1})\bP(\tau)C_{w_{\Gamma}}\bP_{R}(z_{j})].
\end{array}$$
Note that when $B=\cA[t_{1},t_{2}]$ we have $t_{1}^{m}t_{2}^{n}=t_{2}^{n}t_{1}^{m}$ in $B$ thus for $\Phi$ to be well-defined we need to have
$$\Phi(v_{i}\otimes t_{1}^{m}t_{2}^{n}\otimes v_{j})=\Phi(v_{i}\otimes t_{2}^{n}t_{1}^{m}\otimes v_{j}),$$
that is 
$$[\bP(z_{i}^{-1})\bP(t_{1})^{m}\bP(t_{2})^{n}C_{w_{\Gamma}}\bP_{R}(z_{j})]=[\bP(z_{i}^{-1})\bP(t_{2})^{n}\bP(t_{1})^{m}C_{w_{\Gamma}}\bP_{R}(z_{j})].$$
To prove this, it is enough to show that 
$$[\bP(t_{1})\bP(t_{2})C_{w_{\Gamma}}]=[\bP(t_{2})\bP(t_{1})C_{w_{\Gamma}}].$$
This is checked by {\it explicit computation with} GAP.

\begin{Prop}\label{affine-cellular-ideal}

\begin{enumerate}
\item \label{isom}
The map $\Phi:\mathbb{A}(V,B,\varphi) \ra \cM^{\cLR}_{\Ga}$ is an isomorphism of $\cA$-algebras. 
\item \label{bimod} Using \eqref{isom} to define left and right $\cH$-module structures on  $\mathbb{A}(V,B,\varphi)$ by letting  $h \in \cH$ act, for $v,w \in V$ and $b \in B$, as $h(v\otimes b \otimes w) = \Phi^{-1}(h\Phi(v \otimes b\otimes w)$ and $(v\otimes b\otimes w)h = \Phi^{-1}(\Phi(v\otimes b\otimes w)h)$ respectively, $\Phi$ is an isomorphism of $\cH$-$\cH$-bimodules.
\item \label{invol} We have $\Phi(v\otimes b \otimes w)^{\flat}=\Phi(w \otimes b \otimes v)$ for $v,w \in V$ and $b \in B$.
\end{enumerate}
\end{Prop}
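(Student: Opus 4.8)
The plan is to prove the three parts in the order stated, using the basis lemmas of Section \ref{properties} as the backbone.

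\medskip

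\textbf{Part \eqref{isom}.} First I would check that $\Phi$ is surjective. By Lemma \ref{basis-m1}, the elements $[\bP(z_i^{-1})\bP(\tau)C_{w_\Gamma}]$ with $1\le i\le m$, $\tau\in\cT$ form an $\cA$-basis of $\cM^{\cL}_{\Gamma_1}$; applying the same argument on the right (or using the bijection $\cZ\times\cT\times\cZ\to\Gamma$, $(z_i,\tau,z_j)\mapsto z_i^{-1}\tau w_\Gamma z_j$ from Section \ref{description} together with $\bP(z_i^{-1})\bP(\tau)C_{w_\Gamma}\bP_R(z_j)=C_{z_i^{-1}\tau w_\Gamma}\bP_R(z_j)+(\text{lower terms})=C_{z_i^{-1}\tau w_\Gamma z_j}+(\text{lower terms})$), one sees that the images $\Phi(v_i\otimes\tau\otimes v_j)$ are, modulo $\cH_{<_{\cLR}\Gamma}$, equal to $[C_{z_i^{-1}\tau w_\Gamma z_j}]$ plus a combination of $[C_z]$ with $z<z_i^{-1}\tau w_\Gamma z_j$ in $\Gamma$. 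Since $\{[C_w]\mid w\in\Gamma\}$ is an $\cA$-basis of $\cM^{\cLR}_\Gamma$ and the change-of-basis matrix is unitriangular with respect to the Bruhat order, $\Phi$ sends the $\cA$-basis $\{v_i\otimes\tau\otimes v_j\}$ of $\mathbb{A}(V,B,\varphi)$ to an $\cA$-basis of $\cM^{\cLR}_\Gamma$; hence $\Phi$ is an $\cA$-module isomorphism. It remains to check $\Phi$ is multiplicative. Using $C_{w_\Gamma}\bP_R(z_j)\bP(z_k^{-1})\bP(\tau')C_{w_\Gamma}$ lies in the relevant ideal and projects into $\cM_\cT$ by Lemma \ref{mod-tau}, and using $[C_{w_\Gamma z_j}][C_{z_k^{-1}w_\Gamma}]=\sum_\tau a^{j,k}_\tau[\bP(\tau)C_{w_\Gamma}]$ together with Lemmas \ref{left-right} and \ref{basis-tau}, I would compute
$$\Phi(v_i\otimes\tau\otimes v_j)\Phi(v_k\otimes\tau'\otimes v_l)=[\bP(z_i^{-1})\bP(\tau)C_{w_\Gamma}\bP_R(z_j)\bP(z_k^{-1})\bP(\tau')C_{w_\Gamma}\bP_R(z_l)]$$
and move the middle block $[C_{w_\Gamma z_j}][C_{z_k^{-1}w_\Gamma}]$ through the expansion $\sum_\tau a^{j,k}_\tau\tau$, using $\bP(\tau)C_{w_\Gamma}=C_{w_\Gamma}\bP_R(\tau^{-1})$ and the relation $\tau w_\Gamma=w_\Gamma\tau^{-1}$ to commute $\bP(\tau)$-factors past $C_{w_\Gamma}$, so that the result becomes $\Phi(v_i\otimes \tau\,\varphi(v_j,v_k)\,\tau'\otimes v_l)$, matching the multiplication in $\mathbb{A}(V,B,\varphi)$. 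The one subtlety is that in case $B=\cA[t_1,t_2]$ one needs $[\bP(t_1)\bP(t_2)C_{w_\Gamma}]=[\bP(t_2)\bP(t_1)C_{w_\Gamma}]$, which is the GAP computation flagged in the text; granting this, the argument goes through.

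\medskip

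\textbf{Part \eqref{bimod}.} Since $\Phi$ is already an $\cA$-algebra isomorphism onto the (non-unital) algebra $\cM^{\cLR}_\Gamma$, which is a two-sided ideal in $\cH/\cH_{<_{\cLR}\Gamma}$, the $\cH$-bimodule structure transported through $\Phi^{-1}$ is by construction compatible. The only thing to verify is that this transported action is well defined, i.e. that $h\Phi(v\otimes b\otimes w)$ again lies in the image of $\Phi$ (equivalently in $\cM^{\cLR}_\Gamma$) — but that is immediate because $\cM^{\cLR}_\Gamma$ is an $\cH$-$\cH$-bimodule, being an ideal. So Part \eqref{bimod} is essentially a formality once Part \eqref{isom} is in hand; I would state it as a one-line consequence.

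\medskip

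\textbf{Part \eqref{invol}.} This is the compatibility of $\Phi$ with the involution. Recall $\bP_R(y^{-1})=(\bP(y))^{\flat}$ by definition, and $\flat$ is an anti-automorphism fixing $\cH_{\le_{\cLR}\Gamma}$ and $\cH_{<_{\cLR}\Gamma}$, hence inducing $\flat$ on $\cM^{\cLR}_\Gamma$. Applying $\flat$ to $\tilde\Phi(v_i\otimes\tau\otimes v_j)=\bP(z_i^{-1})\bP(\tau)C_{w_\Gamma}\bP_R(z_j)$ and using $(ab)^{\flat}=b^{\flat}a^{\flat}$, $C_{w_\Gamma}^{\flat}=C_{w_\Gamma}$ (since $w_\Gamma\in W'$ is either the longest element of $W'$ or satisfies $w_\Gamma^2=1$, so $w_\Gamma^{-1}=w_\Gamma$), $(\bP(\tau))^{\flat}=\bP_R(\tau^{-1})$, and $(\bP_R(z_j))^{\flat}=\bP(z_j^{-1})$, one gets
$$\tilde\Phi(v_i\otimes\tau\otimes v_j)^{\flat}=\bP(z_j^{-1})\,C_{w_\Gamma}\,\bP_R(\tau^{-1})\,\bP_R(z_i).$$
Now I would move $\bP_R(\tau^{-1})$ to the left of $C_{w_\Gamma}$ via Lemma \ref{left-right} ($C_{w_\Gamma}\bP_R(\tau^{-1})=\bP(\tau)C_{w_\Gamma}$), obtaining $\bP(z_j^{-1})\bP(\tau)C_{w_\Gamma}\bP_R(z_i)=\tilde\Phi(v_j\otimes\tau\otimes v_i)$; passing to $\cM^{\cLR}_\Gamma$ gives $\Phi(v\otimes b\otimes w)^{\flat}=\Phi(w\otimes b\otimes v)$ on basis elements, and by $\cA$-linearity of both $\flat$ and $\Phi$ on all of $\mathbb{A}(V,B,\varphi)$ (note $\nu(b)=b$ here since $\cT$ consists of monomials identified with themselves, and in case $\cT=\{e,t\}$ we have $t^{-1}=t$).

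\medskip

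\textbf{Main obstacle.} The genuinely non-formal step is the multiplicativity in Part \eqref{isom}: one has to track how $\bP(z_j)\bP(z_k^{-1})$ sandwiched between two copies of $C_{w_\Gamma}$ collapses — via the definition of the $a^{j,k}_\tau$ and Lemmas \ref{mod-tau}, \ref{left-right}, \ref{basis-tau} — into exactly $\varphi(v_j,v_k)$, and to be sure the $\bP(\tau)$'s and $\bP(z_i^{-1})$, $\bP_R(z_l)$ pass through cleanly using $\tau w_\Gamma=w_\Gamma\tau^{-1}$ and $\bP(\tau)C_{w_\Gamma}=C_{w_\Gamma}\bP_R(\tau^{-1})$. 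The well-definedness identity $[\bP(t_1)\bP(t_2)C_{w_\Gamma}]=[\bP(t_2)\bP(t_1)C_{w_\Gamma}]$ in the two-variable case is the computational input one must take on faith (or verify by machine), as the authors do.
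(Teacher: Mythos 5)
Your proposal is correct and follows essentially the same route as the paper: unitriangularity of $\bP(z_{i}^{-1})\bP(\tau)C_{w_{\Gamma}}\bP_{R}(z_{j})=C_{z_{i}^{-1}\tau w_{\Gamma}z_{j}}+(\text{lower terms})$ for bijectivity, the same manipulation with $[C_{w_{\Gamma}z_{j}}][C_{z_{k}^{-1}w_{\Gamma}}]=\sum_{\tau}a^{j,k}_{\tau}[\bP(\tau)C_{w_{\Gamma}}]$ and Lemma \ref{left-right} for multiplicativity, part \eqref{bimod} as a formal consequence, and the identical $\flat$-computation (with the GAP-verified commutation $[\bP(t_{1})\bP(t_{2})C_{w_{\Gamma}}]=[\bP(t_{2})\bP(t_{1})C_{w_{\Gamma}}]$ taken as input) for part \eqref{invol}. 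Your explicit justification of $C_{w_{\Gamma}}^{\flat}=C_{w_{\Gamma}}$ via $w_{\Gamma}^{-1}=w_{\Gamma}$ is a detail the paper leaves implicit, but otherwise the arguments coincide.
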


\begin{proof}
The map $\Phi$ is $\cA$-linear by definition. We have, for basis elements $v_i,v_j,v_k,v_l$ of $V$ and $t,t' \in \cT$,
\begin{align*}
&\Phi\big(v_{i}\otimes t\otimes v_{j}\big)\Phi\big(v_{k}\otimes t'\otimes v_{l}\big)\\
&=[\bP(z_{i}^{-1})\bP(t)C_{w_{\Gamma}}\bP_{R}(z_{j})][\bP(z_{k}^{-1})\bP(t')C_{w_{\Gamma}}\bP_{R}(z_{l})]\\
&=[\bP(z_{i}^{-1})\bP(t)C_{w_{\Gamma}z_{j}}\bP(z_{k}^{-1})C_{w_{\Gamma}}\bP_{R}(t'^{-1})\bP_{R}(z_{l})]\\
&=[\bP(z_{i}^{-1})\bP(t)C_{w_{\Gamma}z_{j}}C_{z_{k}^{-1}w_{\Gamma}}\bP_{R}(t'^{-1})\bP_{R}(z_{l})]\\
&=[\bP(z_{i}^{-1})\bP(t)\big(\sum_{\tau \in\cT} a^{j,k}_{\tau}\bP(\tau)\big)C_{w_{\Gamma}}\bP_{R}(t'^{-1})\bP_{R}(z_{l})]\\
&=[\bP(z_{i}^{-1})\bP(t)\big(\sum_{\tau \in\cT} a^{j,k}_{\tau}\bP(\tau)\big)\bP(t')C_{w_{\Gamma}}\bP_{R}(z_{l})]\\
&=\Phi\big(v_{i}\otimes t\varphi(v_{j},v_{k})t'\otimes v_{l}\big).
\end{align*}
So $\Phi$ is indeed a morphism of $\cA$-algebras. 
\begin{Rem}
In the case where $\cT=\{e,t\}$ we quotient out by $t^{2}-1$ in $B$ because we have
$$(\bP(t))^{2}C_{w_{\Gamma}}=C_{w_{\Gamma}}.$$
\end{Rem}

The fact that $\Phi$ is bijective follows easily from the fact that 
$$\bP(z_{i}^{-1})\bP(\tau)C_{w_{\Gamma}}\bP_{R}(z_{j})=C_{z_{i}^{-1}\tau w_{\Gamma}z_{j}}+\sum_{z<z_{i}^{-1}\tau w_{\Gamma}z_{j}}\cA C_{z}.$$ This completes the proof of \eqref{isom}.

Claim \eqref{bimod} follows directly from the definition and the fact that $\cM^{\cLR}_{\Gamma}$ is an $\cH$-$\cH$-bimodule.

To prove Claim \eqref{invol}
we let $v_i,v_j$ be basis elements of $V$ and $\tau \in \cT$, and check
\begin{align*}
\Phi(v_{i}\otimes\tau\otimes v_{j})^{\flat}&=[(\bP(z_{i}^{-1})\bP(\tau)C_{w_{\Gamma}}\bP_{R}(z_{j}))]^{\flat}\\
&=[(\bP(z_{i}^{-1})\bP(\tau)C_{w_{\Gamma}}\bP_{R}(z_{j}))^{\flat}]\\
&=[(\bP_{R}(z_{j}))^{\flat}(C_{w_{\Gamma}})^{\flat}(\bP(\tau))^{\flat}(\bP(z_{i}^{-1}))^{\flat}]\\
&=[\bP(z_{j}^{-1})C_{w_{\Gamma}}\bP_{R}(\tau^{-1})\bP_{R}(z_{i})]\\
&=[\bP(z_{j}^{-1})\bP(\tau)C_{w_{\Gamma}}\bP_{R}(z_{i})]\\
&=\Phi(v_{j}\otimes\tau\otimes v_{i}).
\end{align*}
The claim follows from $\cA$-linearity.
\end{proof}

\begin{Th}
\label{maintheorem}
$\cM^{\cLR}_{\Gamma}$ is an affine cell ideal in $\cH/\cH_{<_{\cLR} \Gamma}$ with the $\cA$-involution induced by $\flat$.
\end{Th}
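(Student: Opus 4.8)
The plan is to verify the hypotheses of the Koenig--Xi criterion, Proposition~\ref{descrip}, for the two-sided ideal $J=\cM^{\cLR}_{\Gamma}$ inside the unitary $\cA$-algebra $A=\cH/\cH_{<_{\cLR}\Gamma}$. Since $\cA=\nC[v,v^{-1}]$ is a principal ideal domain and the antiautomorphism $\flat$ fixes both $\cH_{\leq_{\cLR}\Gamma}$ and $\cH_{<_{\cLR}\Gamma}$, it induces an $\cA$-involution on $A$ that restricts to one on $J$; in particular $\flat(J)=J$, which is condition~(1) in the definition of an affine cell ideal. It then remains to exhibit the data $(B,\nu,V,\varphi)$ together with an $A$-$A$-bimodule structure on $V\otimes_{\cA}B\otimes_{\cA}V$ meeting the remaining requirements.

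I would take $B$, $V$ and $\varphi$ precisely as constructed in Section~\ref{main-result} and set $\nu=\mathrm{id}_{B}$. First one notes that $B$ is an affine $\cA$-algebra in each of the four cases of Section~\ref{description} --- it is one of $\cA[t_{1},t_{2}]$, $\cA[t]$, $\cA[t]/(t^{2}-1)$ or $\cA$ --- that $V$ is free of finite rank $m$, and that $\nu=\mathrm{id}_{B}$ is a genuine $k$-involution because $B$ is commutative. Then Proposition~\ref{affine-cellular-ideal}\eqref{isom} provides an algebra isomorphism $\Phi\colon\mathbb{A}(V,B,\varphi)\to\cM^{\cLR}_{\Gamma}$; Proposition~\ref{affine-cellular-ideal}\eqref{bimod} equips $\mathbb{A}(V,B,\varphi)$ with the $\cH$-$\cH$-bimodule structure transported along $\Phi$, which factors through $A$ since $\cM^{\cLR}_{\Gamma}$ is a two-sided ideal of $A$, and makes $\Phi$ an isomorphism of $A$-$A$-bimodules; and Proposition~\ref{affine-cellular-ideal}\eqref{invol} states exactly that under $\Phi$ the involution $\flat$ corresponds to $v\otimes b\otimes w\mapsto w\otimes b\otimes v=w\otimes\nu(b)\otimes v$. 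Assembling these, all conditions of Proposition~\ref{descrip} hold, and hence $\cM^{\cLR}_{\Gamma}$ is an affine cell ideal in $A$.

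Since the substantive work has been done in constructing the elements $\bP(y)$ and in proving Proposition~\ref{affine-cellular-ideal}, what remains here is essentially bookkeeping with the Koenig--Xi definitions, and I do not expect a real obstacle at this stage. The one point deserving a word of care is the choice $\nu=\mathrm{id}_{B}$: one should check that the identity really is an admissible $k$-involution on each of the commutative algebras $B$ (so the anti-automorphism condition is vacuous) and that it is compatible with the identification of the monomials of $B$ with the elements of $\cT$ used throughout Section~\ref{main-result}; in the case $\cT=\{e,t\}$ this is precisely the reason for passing to $\cA[t]/(t^{2}-1)$, as recorded in the Remark following the proof of Proposition~\ref{affine-cellular-ideal}.
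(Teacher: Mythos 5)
Your proposal is correct and follows exactly the paper's own route: the paper's proof of Theorem \ref{maintheorem} is precisely the one-line observation that Proposition \ref{descrip} applies to the data of Proposition \ref{affine-cellular-ideal} with the involution $\nu$ on $B$ taken to be the identity. Your additional remarks on $\flat(J)=J$ and on the admissibility of $\nu=\mathrm{id}_B$ are sound bookkeeping that the paper leaves implicit.
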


\begin{proof}
According to Proposition \ref{descrip}, this follows from Proposition \ref{affine-cellular-ideal} by choosing the $\cA$-involution on $B$ to be the identity.
\end{proof}

\subsection{Remaining cases}
\label{finite}
Assume that $W$ is of type $\tG_{2}$ (as in Example \ref{G2}) and that $\Gamma$ be a finite two-sided cell which intersect the group generated by $s_{2},s_{3}$. Let
$$\Gamma=\bigcup_{i=1}^{m} \Gamma_i$$
be the decomposition of $\Gamma$ into left cells. \\

Assume that $a>b$. Then it can be checked by inspection that for all $i,j$ we have that $(\Gamma_{i})^{-1}\cap\Gamma_{j}$
only contains one element: we will denote it by  $w^{i,j}$.

Note that this implies that each left cell contains $m$ elements. Let $V$ be a $m$-dimensional $\cA$-module on basis $v_1,\dots, v_m$ and let $B=\cA$. Let

$$\begin{array}{ccccccc}
\varphi:&V\times V &\longrightarrow & B\\
& (v_{j},v_{k})&\longmapsto& a_{j,k}
\end{array}$$

where $a_{j,k}\in B$ is such that 

$$[C_{w^{1,j}}][C_{w^{k,1}}]=a_{j,k}[C_{w^{1,1}}].$$

Then it can be checked in each case that the map
$$\begin{array}{ccccccc}

\Phi:&\mathbb{A}(V,B,\varphi) &\longrightarrow & \cM^{\cLR}_{\Gamma}\\
& v_{i}\otimes 1 \otimes v_{j}&\longmapsto& [C_{w^{i,j}}]
\end{array}$$

satisfies the required properties. This is proved by explicit computation. \\

\medskip

Assume that $a<b$. Then $\Gamma=\Gamma_{1}\cup\Gamma_{2}\cup \Gamma_{3}$ where
\begin{align*}
\Gamma_{1}&=\{s_{3},s_{2}s_{3},s_{1}s_{2}s_{3},s_{2}s_{1}s_{2}s_{3},s_{3}s_{2}s_{1}s_{2}s_{3},s_{1}s_{2}s_{1}s_{2}s_{3}\},\\
\Gamma_{2}&=\{s_{2},s_{3}s_{2}, s_{1}s_{2},s_{2}s_{1}s_{2},s_{3}s_{2}s_{1}s_{2},s_{1}s_{2}s_{1}s_{2}\},\\
\Gamma_{3}&=\{s_{2}s_{1},s_{3}s_{2}s_{1}, s_{1}s_{2}s_{1},s_{2}s_{1}s_{2}s_{1},s_{3}s_{2}s_{1}s_{2}s_{1},s_{1}s_{2}s_{1}s_{2}s_{1}\}.
\end{align*}
Let $B=\cA[t]/(t^{2}-1)$ and let $V$ be a 3-dimensional $\cA$-module with basis $E:=\{v_{1},v_{2},v_{3}\}$. Let 

$$\phi: E\times \{1,t\}\times E\rightarrow \Gamma$$

be such that $\phi(v_{i},1,v_{j})$ (respectively $\phi(v_{i},t,v_{j})$) is the element of minimal (respectively of maximal) length in $(\Gamma_{i})^{-1}\cap \Gamma_{j}$. 
Then we define $\varphi: V\times V\rightarrow B$ by the following matrix 
$$
\left(\begin{array}{ccc}
a_{1}&a_{2}+t&1\\
a_{2}+t&a_{3}+a_{4}t&0\\
1&0&a_{1}
 \end{array}\right)
$$
where
$$\begin{array}{llllll}
a_{1}&=v^{b}+v^{-b}& a_{2}&=v^{a-b}+v^{b-a}\\
a_{3}&=v^{-b}+v^{2a-b}+v^{b-2a}+v^{b} & a_{4}&=v^{-a}+v^{a}.
\end{array}$$
Finally we set
$$\begin{array}{ccccccc}
\Phi:&\mathbb{A}(V,B,\varphi) &\longrightarrow & \cM^{\cLR}_{\Gamma}\\
& v_{i}\otimes 1 \otimes v_{j}&\longmapsto& [C_{\phi(v_{i},1,v_{j})}]\\
& v_{i}\otimes t \otimes v_{j}&\longmapsto& [C_{\phi(v_{i},t,v_{j})}].
\end{array}$$
Then it can be checked by explicit computations that $\cM^{\cLR}_{\Gamma}$ is affine cellular in the quotient $\cH/\cH_{<_{\cLR}\Gamma}$.

\subsection{Proof of Theorem \ref{mainthmintro}}

Theorem \ref{mainthmintro} now follows from the results in Subsections \ref{main-result} and \ref{finite}. Indeed, let $W$ be an affine Weyl group of rank 2 together with a generic weight function $L$ and let $\cH$ be the associated Hecke algebra. Consider the filtration of $\cH$ by two-sided ideals $\cH_{\leq_{\cLR}\Gamma}$ given by the partial order $\leq_{\cLR}$ on two-sided cells $\Ga$ of $W$. We need to show that the Kazhdan-Lusztig cell modules $\cM_{\Ga}^{\cLR}$ are isomorphic to some $\mathbb{A}(V,B,\varphi)$. First we show that ``most'' of the two-sided cells $\Ga$ of $W$ can be described as
$$\Ga=\{z^{-1}\tau w_{\Gamma}z'\mid \tau \in \cT, z,z'\in\cZ\}$$
for some subsets $\cT,\cZ$ of $W$ and $w_{\Ga}\in W$ (see  Sections \ref{description}, Section \ref{examples} and Appendix \ref{appendix}). Then, using the Generalised Induction Theorem of Kazhdan-Lusztig cells (see Section \ref{gid}), we define polynomials 
$\bP(\tau)\in\cA$ for all $\tau\in\cT$ (see Section \ref{Ppol}). Finally, we show that the map
$$\begin{array}{ccccccc}
\Phi:&\mathbb{A}(V,B,\varphi) &\longrightarrow & \cM^{\cLR}_{\Gamma}\\
& v_{i}\otimes \tau \otimes v_{j}&\longmapsto& [\bP(z_{i}^{-1})\bP(\tau)C_{w_{\Gamma}}\bP_{R}(z_{j})]
\end{array}$$
is an isomorphism of $\cA$-algebras (see Section \ref{main-result}). We then treat the case of the two-sided cells which cannot be described as above in Section \ref{finite}.   \\

Applying the results in Section \ref{affcell}, we obtain a parameterisation of simple modules of $\cH$: For each cell we obtain a simple module for every maximal ideal of the corresponding $\cA$-algebra $B$, defined at the start of Subsection \ref{main-result}. If we specialise to $\mathbb{C}(v) \otimes_{\cA}\cH$, the parametrisation is simply given by tuples $(a,b) \in \mathbb{C}(v)^2$ if $T=\{t_{1},t_{2}\}$, $a \in\mathbb{C}(v)$ if $T=\{t\}$, $\pm1$ if $T=\{e,t\}$, and $1$ if $T=\{e\}$.
Also it is clear that the affine $\cA$-algebras $B$ that appear in our construction satisfy $\mathrm{rad}(B)=0$ and have finite global dimension. Thus in order to prove that the affine Hecke algebra $\cH$ has finite global dimension, using the affine cellular structure, one would need to show that every $\cM^{\cLR}_{\Ga}$ is an idempotent ideal in $\cH/\cH_{<_{\cLR}\Ga}$ and that it contains an idempotent element in $\cH/\cH_{<_{\cLR}\Ga}$.

\begin{Rem}
The problem of finiteness of global dimension of affine Hecke algebras have already been addressed by Opdam and Solleveld in \cite{OS}. Using methods of harmonic analysis, they determined the (finite) global dimension of affine Hecke algebras in the case where $v$ is specialized to a positive real number. 
\end{Rem}


\section{Examples}
\label{examples}
The aim of this section is to provide some explicit examples of the sets $\cZ$ and $\cT$ as defined in Section \ref{description}.
Let $W$ be an affine Weyl group of type $\tG_{2}$ as in Example \ref{G2} together with some generic parameters $a,b$ such that $a/b>2$. 
The partition into cells in this case is shown in the following figure: the left cells are formed by the alcoves lying in the same connected component after removing the thick lines and the two-sided cells are the unions of all the left cells whose names share the same subscript. The alcove corresponding to the identity is denoted by $\tc_{6}^{1}$. We use the geometric presentation of $W$ as defined in \cite{Lus1}. 
\begin{center}

\begin{pspicture}(-6,-6.4)(6,6.4)
\psset{unit=1cm}
\SpecialCoor
\psline(0,-6)(0,6)
\multido{\n=1+1}{7}{
\psline(\n;30)(\n;330)}
\rput(1;30){\psline(0,0)(0,5.5)}
\rput(1;330){\psline(0,0)(0,-5.5)}
\rput(2;30){\psline(0,0)(0,5)}
\rput(2;330){\psline(0,0)(0,-5)}
\rput(3;30){\psline(0,0)(0,4.5)}
\rput(3;330){\psline(0,0)(0,-4.5)}
\rput(4;30){\psline(0,0)(0,4)}
\rput(4;330){\psline(0,0)(0,-4)}
\rput(5;30){\psline(0,0)(0,3.5)}
\rput(5;330){\psline(0,0)(0,-3.5)}
\rput(6;30){\psline(0,0)(0,3)}
\rput(6;330){\psline(0,0)(0,-3)}
\rput(7;30){\psline(0,0)(0,2.5)}
\rput(7;330){\psline(0,0)(0,-2.5)}
\multido{\n=1+1}{7}{
\psline(\n;150)(\n;210)}
\rput(1;150){\psline(0,0)(0,5.5)}
\rput(1;210){\psline(0,0)(0,-5.5)}
\rput(2;150){\psline(0,0)(0,5)}
\rput(2;210){\psline(0,0)(0,-5)}
\rput(3;150){\psline(0,0)(0,4.5)}
\rput(3;210){\psline(0,0)(0,-4.5)}
\rput(4;150){\psline(0,0)(0,4)}
\rput(4;210){\psline(0,0)(0,-4)}
\rput(5;150){\psline(0,0)(0,3.5)}
\rput(5;210){\psline(0,0)(0,-3.5)}
\rput(6;150){\psline(0,0)(0,3)}
\rput(6;210){\psline(0,0)(0,-3)}
\rput(7;150){\psline(0,0)(0,2.5)}
\rput(7;210){\psline(0,0)(0,-2.5)}
\multido{\n=1.5+1.5}{4}{
\psline(-6.062,\n)(6.062,\n)}
\multido{\n=0+1.5}{5}{
\psline(-6.062,-\n)(6.062,-\n)}
\psline(0;0)(7;30)

\rput(0,1){\psline(0;0)(7;30)}
\rput(0,1){\psline(0;0)(7;210)}
\rput(0,2){\psline(0;0)(7;30)}
\rput(0,2){\psline(0;0)(7;210)}
\rput(0,3){\psline(0;0)(6;30)}
\rput(0,3){\psline(0;0)(7;210)}
\rput(0,4){\psline(0;0)(4;30)}
\rput(0,4){\psline(0;0)(7;210)}
\rput(0,5){\psline(0;0)(2;30)}
\rput(0,5){\psline(0;0)(7;210)}
\rput(0,6){\psline(0;0)(7;210)}
\rput(-1.732,6){\psline(0;0)(5;210)}
\rput(-3.464,6){\psline(0;0)(3;210)}
\rput(-5.196,6){\psline(0;0)(1;210)}

\rput(0,-1){\psline(0;0)(7;30)}
\rput(0,-1){\psline(0;0)(7;210)}
\rput(0,-2){\psline(0;0)(7;30)}
\rput(0,-2){\psline(0;0)(7;210)}
\rput(0,-3){\psline(0;0)(7;30)}
\rput(0,-3){\psline(0;0)(6;210)}
\rput(0,-4){\psline(0;0)(7;30)}
\rput(0,-4){\psline(0;0)(4;210)}
\rput(0,-5){\psline(0;0)(7;30)}
\rput(0,-5){\psline(0;0)(2;210)}
\rput(0,-6){\psline(0;0)(7;30)}
\rput(1.732,-6){\psline(0;0)(5;30)}
\rput(3.464,-6){\psline(0;0)(3;30)}
\rput(5.196,-6){\psline(0;0)(1;30)}

\psline(0;0)(6.928;60)
\rput(1.732,0){\psline(0;0)(6.928;60)}
\rput(3.464,0){\psline(0;0)(5.196;60)}
\rput(5.196,0){\psline(0;0)(1.732;60)}
\rput(1.732,0){\psline(0;0)(6.928;240)}
\rput(3.464,0){\psline(0;0)(6.928;240)}
\rput(5.196,0){\psline(0;0)(6.928;240)}
\rput(6.062,-1.5){\psline(0;0)(5.196;240)}
\rput(6.062,-4.5){\psline(0;0)(1.732;240)}
\rput(-1.732,0){\psline(0;0)(6.928;60)}
\rput(-3.464,0){\psline(0;0)(6.928;60)}
\rput(-5.196,0){\psline(0;0)(6.928;60)}
\rput(-1.732,0){\psline(0;0)(6.928;240)}
\rput(-3.464,0){\psline(0;0)(5.196;240)}
\rput(-5.196,0){\psline(0;0)(1.732;240)}
\rput(-6.062,1.5){\psline(0;0)(5.196;60)}
\rput(-6.062,4.5){\psline(0;0)(1.732;60)}

\psline(0;0)(6.928;120)
\psline(0;0)(6.928;300)
\rput(-1.732,0){\psline(0;0)(6.928;120)}
\rput(-3.464,0){\psline(0;0)(5.196;120)}
\rput(-5.196,0){\psline(0;0)(1.732;120)}
\rput(-1.732,0){\psline(0;0)(6.928;300)}
\rput(-3.464,0){\psline(0;0)(6.928;300)}
\rput(-5.196,0){\psline(0;0)(6.928;300)}
\rput(-6.062,-1.5){\psline(0;0)(5.196;300)}
\rput(-6.062,-4.5){\psline(0;0)(1.732;300)}
\rput(1.732,0){\psline(0;0)(6.928;300)}
\rput(3.464,0){\psline(0;0)(5.196;300)}
\rput(5.196,0){\psline(0;0)(1.732;300)}
\rput(1.732,0){\psline(0;0)(6.928;120)}
\rput(3.464,0){\psline(0;0)(6.928;120)}
\rput(5.196,0){\psline(0;0)(6.928;120)}
\rput(6.062,1.5){\psline(0;0)(5.196;120)}
\rput(6.062,4.5){\psline(0;0)(1.732;120)}

\rput(0,1){\psline(0;0)(7;150)}
\rput(0,1){\psline(0;0)(7;330)}
\rput(0,2){\psline(0;0)(7;150)}
\rput(0,2){\psline(0;0)(7;330)}
\rput(0,3){\psline(0;0)(6;150)}
\rput(0,3){\psline(0;0)(7;330)}
\rput(0,4){\psline(0;0)(4;150)}
\rput(0,4){\psline(0;0)(7;330)}
\rput(0,5){\psline(0;0)(7;330)}
\rput(0,6){\psline(0;0)(7;330)}
\rput(-1.732,6){\psline(0;0)(2;330)}

\rput(0,-1){\psline(0;0)(7;150)}
\rput(0,-1){\psline(0;0)(7;330)}
\rput(0,-2){\psline(0;0)(7;150)}
\rput(0,-2){\psline(0;0)(7;330)}
\rput(0,-3){\psline(0;0)(7;150)}
\rput(0,-3){\psline(0;0)(6;330)}
\rput(0,-4){\psline(0;0)(7;150)}
\rput(0,-4){\psline(0;0)(4;330)}
\rput(0,-5){\psline(0;0)(7;150)}
\rput(0,-5){\psline(0;0)(2;330)}
\rput(0,-6){\psline(0;0)(7;150)}

\rput(6.062,3.5){\psline(0;0)(5;150)}
\rput(6.062,4.5){\psline(0;0)(3;150)}
\rput(6.062,5.5){\psline(0;0)(1;150)}

\rput(-6.062,-3.5){\psline(0;0)(5;330)}
\rput(-6.062,-4.5){\psline(0;0)(3;330)}
\rput(-6.062,-5.5){\psline(0;0)(1;330)}

\psline(0;0)(7;330)
\psline(0;0)(7;150)
\psline(0;0)(7;210)
\psline(0;0)(6.928;240)


\psline[linewidth=1mm](0,1)(0,1.5)
\psline[linewidth=1mm](0,1)(.433,.75)
\psline[linewidth=1mm](0,0)(.866,1.5)
\psline[linewidth=1mm](.866,1.5)(0,1.5)
\psline[linewidth=1mm](-.866,1.5)(0,1)
\psline[linewidth=1mm](0,1)(0,0)
\psline[linewidth=1mm](0,0)(-.866,1.5)

\psline[linewidth=1mm](0,1.5)(0,3)
\psline[linewidth=1mm](0,3)(-1.732,6)
\psline[linewidth=1mm](-.866,1.5)(-3.464,6)

\psline[linewidth=1mm](.433,.75)(1.732,0)
\psline[linewidth=1mm](1.732,0)(6.062,0)
\psline[linewidth=1mm](2.598,1.5)(6.062,1.5)

\psline[linewidth=1mm](-.433,.75)(-1.732,0)
\psline[linewidth=1mm](-1.732,0)(-6.062,0)
\psline[linewidth=1mm](-2.598,1.5)(-6.062,1.5)

\psline[linewidth=1mm](.866,0)(.866,-1.5)
\psline[linewidth=1mm](.866,-1.5)(3.464,-6)
\psline[linewidth=1mm](.866,0)(1.732,0)
\psline[linewidth=1mm](2.598,-1.5)(5.196,-6)

\psline[linewidth=1mm](-.866,0)(-.866,-1.5)
\psline[linewidth=1mm](-.866,-1.5)(-3.464,-6)
\psline[linewidth=1mm](-.866,0)(-1.732,0)
\psline[linewidth=1mm](-2.598,-1.5)(-5.196,-6)

\psline[linewidth=1mm](.433,2.25)(0,3)
\psline[linewidth=1mm](0.866,4.5)(1.732,6)
\psline[linewidth=1mm](.433,2.25)(1.732,3)
\psline[linewidth=1mm](1.732,3)(3.464,6)

\psline[linewidth=1mm](0,0)(0,-6)
\psline[linewidth=1mm](0,0)(.866,-1.5)
\psline[linewidth=1mm](.866,-1.5)(.866,-6)

\psline[linewidth=1mm](1.732,0)(6.062,-2.5)
\psline[linewidth=1mm](2.598,-1.5)(6.062,-3.5)

\psline[linewidth=1mm](-1.732,0)(-6.062,-2.5)
\psline[linewidth=1mm](-2.598,-1.5)(-6.062,-3.5)

\psline[linewidth=1mm](.866,1.5)(6.062,4.5)
\psline[linewidth=1mm](2.598,1.5)(6.062,3.5)

\psline[linewidth=1mm](-.866,1.5)(-6.062,4.5)
\psline[linewidth=1mm](-2.598,1.5)(-6.062,3.5)

\psline[linewidth=1mm](0,3)(0,6)
\psline[linewidth=1mm](.866,4.5)(.866,6)

\psline[linewidth=1mm](.866,1.5)(1.732,3)
\psline[linewidth=1mm](0,0)(-.866,-1.5)

\psline[linewidth=1mm](0,1.5)(-.866,1.5)


\psline[linewidth=1mm](1.732,0)(2.598,-1.5)
\psline[linewidth=1mm](.866,1.5)(2.598,1.5)
\psline[linewidth=1mm](-.866,1.5)(-2.598,1.5)
\psline[linewidth=1mm](-1.732,0)(-2.598,-1.5)
\psline[linewidth=1mm](0,3)(.866,4.5)

\rput(-1.5,-4.33){{\small ${\bf \tc_{0}^{1}}$}}
\rput(1.988,-4.31){{\small ${\bf \tc_{0}^{2}}$}}
\rput(5.452,-4.31){{\small ${\bf \tc_{0}^{3}}$}}
\rput(5.452,-1.31){{\small ${\bf \tc_{0}^{4}}$}}
\rput(3.69,1.7){{\small ${\bf \tc_{0}^{5}}$}}
\rput(2.824,3.2){{\small ${\bf \tc_{0}^{6}}$}}
\rput(1.1,5.75){{\small ${\bf \tc_{0}^{7}}$}}
\rput(-0.23,4.67){{\small ${\bf \tc_{0}^{8}}$}}
\rput(-2.372,3.2){{\small ${\bf \tc_{0}^{9}}$}}
\rput(-4.97,1.7){{\small ${\bf \tc_{0}^{10}}$}}

\rput(-4.97,-1.3){{\small ${\bf \tc_{0}^{11}}$}}
\rput(-4.97,-4.3){{\small ${\bf \tc_{0}^{12}}$}}

\rput(2.3,-2.33){{\small ${\bf  \tc_{2}^{1}}$}}
\rput(3.2,.73){{\small ${\bf \tc_{2}^{2}}$}}
\rput(1.5,4.34){{\small ${\bf \tc_{2}^{3}}$}}
\rput(-1.5,4.34){{\small ${\bf \tc_{2}^{4}}$}}
\rput(-3.2,.73){{\small ${\bf \tc_{2}^{5}}$}}
\rput(-2.3,-2.33){{\small ${\bf  \tc_{2}^{6}}$}}

\rput(0.3,-4.33){{\small ${\bf  \tc_{1}^{1}}$}}
\rput(3.2,-1.33){{\small ${\bf \tc_{1}^{2}}$}}
\rput(3.2,2.34){{\small ${\bf \tc_{1}^{3}}$}}
\rput(0.3,4.33){{\small ${\bf \tc_{1}^{4}}$}}
\rput(-3.4,2.34){{\small ${\bf \tc_{1}^{5}}$}}
\rput(-3.2,-1.33){{\small ${\bf \tc_{1}^{6}}$}}


\rput(-.22,1.3){{\small ${\bf \tc_{4}^{1}}$}}

\rput(.6,.19){{\small ${\bf \tc_{3}^{1}}$}}
\rput(.4,1.93){{\small ${\bf \tc_{3}^{2}}$}}
\rput(-.49,.23){{\small ${\bf \tc_{3}^{3}}$}}

\rput(.3,1.1){{\small ${\bf \tc_{5}^{1}}$}}
\rput(-.23,.73){{\small ${\bf \tc_{5}^{2}}$}}

\rput(.17,.68){{\small ${\bf \tc_{6}^{1}}$}}

\end{pspicture}
\end{center}
We first have a look at the lowest two-sided cell $\tc_{0}$. In Figure \ref{c0},  we show the elements of the set $\cZ=\{z_{1},\ldots,z_{12}\}$: these are the elements which correspond to the alcoves in dark gray. The alcoves lying in the box in light gray correspond to the elements in $\cZ^{-1}$. We set  $t_{1}=s_{2}s_{1}s_{2}s_{1}s_{2}s_{3}$, $t_{2}=s_{1}s_{2}s_{1}s_{2}s_{3}s_{1}s_{2}s_{1}s_{2}s_{3}$ and
$$\cT:=\{t_{1}^{n}t_{2}^{m}|n,m\in \nN\}.$$
Then we have
\begin{align*}
\tc_{0}&=\{z_{i}^{-1}\tau w_{0}z_{j}\mid \tau \in \cT, 1\leq i,j\leq 12\}\\
\tc_{0}^{j}&=\{z_{i}^{-1}\tau w_{0}z_{j}\mid \tau \in \cT, 1\leq j\leq 12\}.
\end{align*}
where $w_{0}=w_{1,2}$. This should be understood in the following way. Let $w=z_{i}^{-1}\tau w_{0}z_{j}\in\tc_{0}$. The element $z_{j}$ indicate in which connected component of $\tc_{0}$ the element $w$ lies. Then $\tau$ indicates in which translate of the box $w$ lies and finally $z_{i}^{-1}$ indicates where in the translate of box $w$ lies. This is explained in Figure \ref{c0}.
\begin{figure}[h!]
\caption{Description of $\tc_{0}$}
\label{c0}
\begin{textblock}{10.5}(3,8.1)
\begin{center}
\psset{unit=.6cm}
\begin{pspicture}(-6,-6.4)(6,6.4)
\psset{linewidth=.1mm}

\pspolygon[fillcolor=lightgray,fillstyle=solid](0,0)(0,3)(.866,4.5)(.866,1.5)

\psline[linewidth=.5mm](0,0)(0,-6)
\psline[linewidth=.5mm](0,0)(-3.46,-6)
\pspolygon[fillcolor=gray,fillstyle=solid](0,0)(0,1)(0.433,0.75)

\psline[linewidth=.5mm](.866,-1.5)(.866,-6)
\psline[linewidth=.5mm](.866,-1.5)(3.46,-6)
\pspolygon[fillcolor=gray,fillstyle=solid](.866,-1.5)(.866,-.5)(.433,-.75)

\psline[linewidth=.5mm](-2.598,-1.5)(-6.062,-3.5)
\psline[linewidth=.5mm](-2.598,-1.5)(-5.196,-6)
\pspolygon[fillcolor=gray,fillstyle=solid](-2.598,-1.5)(-2.165,-.75)(-1.732,-1)

\psline[linewidth=.5mm](-1.732,0)(-6.062,0)
\psline[linewidth=.5mm](-1.732,0)(-6.062,-2.5)
\pspolygon[fillcolor=gray,fillstyle=solid](-1.732,0)(-.866,0)(-.866,.5)

\psline[linewidth=.5mm](-.866,1.5)(-3.464,6)
\psline[linewidth=.5mm](-.866,1.5)(-6.062,4.5)
\pspolygon[fillcolor=gray,fillstyle=solid](-.866,1.5)(0,1)(-.433,.75)

\psline[linewidth=.5mm](-2.598,1.5)(-6.062,1.5)
\psline[linewidth=.5mm](-2.598,1.5)(-6.062,3.5)
\pspolygon[fillcolor=gray,fillstyle=solid](-2.598,1.5)(-1.732,1.5)(-1.732,1)

\psline[linewidth=.5mm](0,3)(0,6)
\psline[linewidth=.5mm](0,3)(-1.732,6)
\pspolygon[fillcolor=gray,fillstyle=solid](0,3)(0,2)(.433,2.25)

\psline[linewidth=.5mm](.866,4.5)(.866,6)
\psline[linewidth=.5mm](.866,4.5)(1.732,6)
\pspolygon[fillcolor=gray,fillstyle=solid](.866,4.5)(.866,3.5)(1.3,3.75)

\psline[linewidth=.5mm](.866,1.5)(3.464,6)
\psline[linewidth=.5mm](.866,1.5)(6.062,4.5)
\pspolygon[fillcolor=gray,fillstyle=solid](.866,1.5)(0,1)(.433,.75)

\psline[linewidth=.5mm](2.598,1.5)(6.062,1.5)
\psline[linewidth=.5mm](2.598,1.5)(6.062,3.5)
\pspolygon[fillcolor=gray,fillstyle=solid](2.598,1.5)(1.732,1.5)(1.732,1)

\psline[linewidth=.5mm](2.598,-1.5)(6.062,-3.5)
\psline[linewidth=.5mm](2.598,-1.5)(5.196,-6)
\pspolygon[fillcolor=gray,fillstyle=solid](1.732,0)(.866,0)(.866,.5)

\psline[linewidth=.5mm](1.732,0)(6.062,0)
\psline[linewidth=.5mm](1.732,0)(6.062,-2.5)
\pspolygon[fillcolor=gray,fillstyle=solid](2.598,-1.5)(2.165,-.75)(1.732,-1)

\SpecialCoor
\psline(0,-6)(0,6)
\multido{\n=1+1}{7}{
\psline(\n;30)(\n;330)}
\rput(1;30){\psline(0,0)(0,5.5)}
\rput(1;330){\psline(0,0)(0,-5.5)}
\rput(2;30){\psline(0,0)(0,5)}
\rput(2;330){\psline(0,0)(0,-5)}
\rput(3;30){\psline(0,0)(0,4.5)}
\rput(3;330){\psline(0,0)(0,-4.5)}
\rput(4;30){\psline(0,0)(0,4)}
\rput(4;330){\psline(0,0)(0,-4)}
\rput(5;30){\psline(0,0)(0,3.5)}
\rput(5;330){\psline(0,0)(0,-3.5)}
\rput(6;30){\psline(0,0)(0,3)}
\rput(6;330){\psline(0,0)(0,-3)}
\rput(7;30){\psline(0,0)(0,2.5)}
\rput(7;330){\psline(0,0)(0,-2.5)}
\multido{\n=1+1}{7}{
\psline(\n;150)(\n;210)}
\rput(1;150){\psline(0,0)(0,5.5)}
\rput(1;210){\psline(0,0)(0,-5.5)}
\rput(2;150){\psline(0,0)(0,5)}
\rput(2;210){\psline(0,0)(0,-5)}
\rput(3;150){\psline(0,0)(0,4.5)}
\rput(3;210){\psline(0,0)(0,-4.5)}
\rput(4;150){\psline(0,0)(0,4)}
\rput(4;210){\psline(0,0)(0,-4)}
\rput(5;150){\psline(0,0)(0,3.5)}
\rput(5;210){\psline(0,0)(0,-3.5)}
\rput(6;150){\psline(0,0)(0,3)}
\rput(6;210){\psline(0,0)(0,-3)}
\rput(7;150){\psline(0,0)(0,2.5)}
\rput(7;210){\psline(0,0)(0,-2.5)}
\multido{\n=1.5+1.5}{4}{
\psline(-6.062,\n)(6.062,\n)}
\multido{\n=0+1.5}{5}{
\psline(-6.062,-\n)(6.062,-\n)}
\psline(0;0)(7;30)

\rput(0,1){\psline(0;0)(7;30)}
\rput(0,1){\psline(0;0)(7;210)}
\rput(0,2){\psline(0;0)(7;30)}
\rput(0,2){\psline(0;0)(7;210)}
\rput(0,3){\psline(0;0)(6;30)}
\rput(0,3){\psline(0;0)(7;210)}
\rput(0,4){\psline(0;0)(4;30)}
\rput(0,4){\psline(0;0)(7;210)}
\rput(0,5){\psline(0;0)(2;30)}
\rput(0,5){\psline(0;0)(7;210)}
\rput(0,6){\psline(0;0)(7;210)}
\rput(-1.732,6){\psline(0;0)(5;210)}
\rput(-3.464,6){\psline(0;0)(3;210)}
\rput(-5.196,6){\psline(0;0)(1;210)}

\rput(0,-1){\psline(0;0)(7;30)}
\rput(0,-1){\psline(0;0)(7;210)}
\rput(0,-2){\psline(0;0)(7;30)}
\rput(0,-2){\psline(0;0)(7;210)}
\rput(0,-3){\psline(0;0)(7;30)}
\rput(0,-3){\psline(0;0)(6;210)}
\rput(0,-4){\psline(0;0)(7;30)}
\rput(0,-4){\psline(0;0)(4;210)}
\rput(0,-5){\psline(0;0)(7;30)}
\rput(0,-5){\psline(0;0)(2;210)}
\rput(0,-6){\psline(0;0)(7;30)}
\rput(1.732,-6){\psline(0;0)(5;30)}
\rput(3.464,-6){\psline(0;0)(3;30)}
\rput(5.196,-6){\psline(0;0)(1;30)}

\psline(0;0)(6.928;60)
\rput(1.732,0){\psline(0;0)(6.928;60)}
\rput(3.464,0){\psline(0;0)(5.196;60)}
\rput(5.196,0){\psline(0;0)(1.732;60)}
\rput(1.732,0){\psline(0;0)(6.928;240)}
\rput(3.464,0){\psline(0;0)(6.928;240)}
\rput(5.196,0){\psline(0;0)(6.928;240)}
\rput(6.062,-1.5){\psline(0;0)(5.196;240)}
\rput(6.062,-4.5){\psline(0;0)(1.732;240)}
\rput(-1.732,0){\psline(0;0)(6.928;60)}
\rput(-3.464,0){\psline(0;0)(6.928;60)}
\rput(-5.196,0){\psline(0;0)(6.928;60)}
\rput(-1.732,0){\psline(0;0)(6.928;240)}
\rput(-3.464,0){\psline(0;0)(5.196;240)}
\rput(-5.196,0){\psline(0;0)(1.732;240)}
\rput(-6.062,1.5){\psline(0;0)(5.196;60)}
\rput(-6.062,4.5){\psline(0;0)(1.732;60)}

\psline(0;0)(6.928;120)
\psline(0;0)(6.928;300)
\rput(-1.732,0){\psline(0;0)(6.928;120)}
\rput(-3.464,0){\psline(0;0)(5.196;120)}
\rput(-5.196,0){\psline(0;0)(1.732;120)}
\rput(-1.732,0){\psline(0;0)(6.928;300)}
\rput(-3.464,0){\psline(0;0)(6.928;300)}
\rput(-5.196,0){\psline(0;0)(6.928;300)}
\rput(-6.062,-1.5){\psline(0;0)(5.196;300)}
\rput(-6.062,-4.5){\psline(0;0)(1.732;300)}
\rput(1.732,0){\psline(0;0)(6.928;300)}
\rput(3.464,0){\psline(0;0)(5.196;300)}
\rput(5.196,0){\psline(0;0)(1.732;300)}
\rput(1.732,0){\psline(0;0)(6.928;120)}
\rput(3.464,0){\psline(0;0)(6.928;120)}
\rput(5.196,0){\psline(0;0)(6.928;120)}
\rput(6.062,1.5){\psline(0;0)(5.196;120)}
\rput(6.062,4.5){\psline(0;0)(1.732;120)}

\rput(0,1){\psline(0;0)(7;150)}
\rput(0,1){\psline(0;0)(7;330)}
\rput(0,2){\psline(0;0)(7;150)}
\rput(0,2){\psline(0;0)(7;330)}
\rput(0,3){\psline(0;0)(6;150)}
\rput(0,3){\psline(0;0)(7;330)}
\rput(0,4){\psline(0;0)(4;150)}
\rput(0,4){\psline(0;0)(7;330)}
\rput(0,5){\psline(0;0)(7;330)}
\rput(0,6){\psline(0;0)(7;330)}
\rput(-1.732,6){\psline(0;0)(2;330)}

\rput(0,-1){\psline(0;0)(7;150)}
\rput(0,-1){\psline(0;0)(7;330)}
\rput(0,-2){\psline(0;0)(7;150)}
\rput(0,-2){\psline(0;0)(7;330)}
\rput(0,-3){\psline(0;0)(7;150)}
\rput(0,-3){\psline(0;0)(6;330)}
\rput(0,-4){\psline(0;0)(7;150)}
\rput(0,-4){\psline(0;0)(4;330)}
\rput(0,-5){\psline(0;0)(7;150)}
\rput(0,-5){\psline(0;0)(2;330)}
\rput(0,-6){\psline(0;0)(7;150)}

\rput(6.062,3.5){\psline(0;0)(5;150)}
\rput(6.062,4.5){\psline(0;0)(3;150)}
\rput(6.062,5.5){\psline(0;0)(1;150)}

\rput(-6.062,-3.5){\psline(0;0)(5;330)}
\rput(-6.062,-4.5){\psline(0;0)(3;330)}
\rput(-6.062,-5.5){\psline(0;0)(1;330)}

\psline(0;0)(7;330)
\psline(0;0)(7;150)
\psline(0;0)(7;210)
\psline(0;0)(6.928;240)

\end{pspicture}
\end{center}
\end{textblock}

\begin{textblock}{10.5}(10,9.8)
\begin{center}
\psset{unit=.5cm}
\begin{pspicture}(-6,-1)(9,9.4)
\psset{linewidth=.1mm}

\pspolygon[fillcolor=lightgray,fillstyle=solid](0,0)(0,3)(.866,4.5)(.866,1.5)

\psline[linewidth=.5mm]{->}(0,0)(0,3)
\pspolygon[fillcolor=gray,fillstyle=solid](0,3)(0,6)(.866,7.5)(.866,4.5)

\psline[linewidth=.5mm]{->}(0,0)(.866,1.5)
\pspolygon[fillcolor=gray,fillstyle=solid](.866,1.5)(.866,4.5)(1.732,6)(1.732,3)

\psline(0,0)(5.196,9)
\psline(0,0)(0,9)

\psline(0,9)(5.196,9)
\psline(0,7.5)(4.33,7.5)
\psline(0,6)(3.464,6)
\psline(0,4.5)(2.598,4.5)
\psline(0,3)(1.732,3)
\psline(0,1.5)(.866,1.5)

\psline(0,1)(.433,.75)
\psline(0,2)(.866,1.5)
\psline(0,3)(1.3,2.25)
\psline(0,4)(1.732,3)
\psline(0,5)(2.196,3.75)
\psline(0,6)(2.598,4.5)
\psline(0,7)(3.031,5.25)
\psline(0,8)(3.464,6)
\psline(0,9)(3.9,6.75)
\psline(1.732,9)(4.33,7.5)
\psline(3.464,9)(4.8,8.25)

\psline(0,1)(.866,1.5)
\psline(0,2)(1.732,3)
\psline(0,3)(2.598,4.5)
\psline(0,4)(3.464,6)
\psline(0,5)(4.3,7.5)
\psline(0,6)(5.196,9)
\psline(0,7)(3.464,9)
\psline(0,8)(1.732,9)

\psline(0,3)(.866,1.5)
\psline(0,6)(1.732,3)
\psline(0,9)(2.598,4.5)
\psline(1.732,9)(3.464,6)
\psline(3.464,9)(4.3,7.5)

\psline(.866,1.5)(.866,9)
\psline(1.732,3)(1.732,9)
\psline(2.598,4.5)(2.598,9)
\psline(3.464,6)(3.464,9)
\psline(4.3,7.5)(4.3,9)

\psline(0,3)(3.464,9)
\psline(0,6)(1.732,9)
\rput(-.7,1.4){$t_{1}$}
\rput(1.4,1.4){$t_{2}$}

\end{pspicture}\end{center}
\end{textblock}
\end{figure}
$\ $\\

\vspace{7.3cm}
\noindent
We now have a look at the two-sided cell $\tc_{1}$. We set $w_{1}=s_{1}s_{2}s_{1}s_{2}s_{1}$. In Figure \ref{c1}, we show the elements of the set $\cZ=\{z_{1},\ldots,z_{6}\}$: these are the elements which correspond to the alcoves in dark gray. We set  $t=s_{1}s_{2}s_{1}s_{2}s_{3}$ and 
$$\cT:=\{t^{n}|n\in \nN\}.$$
Then we have
\begin{align*}
\tc_{1}&=\{z_{i}^{-1}\tau w_{1}z_{j}\mid \tau \in \cT, 1\leq i,j\leq 6\}\\
\tc_{1}^{j}&=\{z_{i}^{-1}\tau w_{1}z_{j}\mid \tau \in \cT, 1\leq j\leq 6\}.
\end{align*}
This is explained in Figure \ref{c1}.

\newpage

\begin{figure}[h!]
\caption{Description of the two-sided cell $\tc_{1}$}
\label{c1}
\begin{textblock}{10.5}(3,3.8)
\begin{center}
\psset{unit=.5cm}
\begin{pspicture}(-6,-7.4)(6,6.4)
\psset{linewidth=.1mm}

\pspolygon[fillstyle=solid,fillcolor=lightgray](0,0)(0,3)(.866,1.5)

\psline[linewidth=.5mm](0,0)(0,-6)
\psline[linewidth=.5mm](.866,-1.5)(.866,-6)
\psline[linewidth=.5mm](0,0)(.866,-1.5)
\pspolygon[fillcolor=gray,fillstyle=solid](0,0)(0,1)(0.433,0.75)

\psline[linewidth=.5mm](-2.598,-1.5)(-6.062,-3.5)
\psline[linewidth=.5mm](-1.732,0)(-6.062,-2.5)
\psline[linewidth=.5mm](-1.732,0)(-2.598,-1.5)
\pspolygon[fillcolor=gray,fillstyle=solid](-1.732,0)(-.866,0)(-.866,.5)

\psline[linewidth=.5mm](-2.598,1.5)(-6.062,3.5)
\psline[linewidth=.5mm](-.866,1.5)(-6.062,4.5)
\psline[linewidth=.5mm](-.866,1.5)(-2.598,1.5)
\pspolygon[fillcolor=gray,fillstyle=solid](-.866,1.5)(0,1)(-.433,.75)

\psline[linewidth=.5mm](0,3)(0,6)
\psline[linewidth=.5mm](.866,4.5)(.866,6)
\psline[linewidth=.5mm](.866,4.5)(0,3)
\pspolygon[fillcolor=gray,fillstyle=solid](0,3)(0,2)(.433,2.25)

\psline[linewidth=.5mm](2.598,-1.5)(6.062,-3.5)
\psline[linewidth=.5mm](1.732,0)(6.062,-2.5)
\psline[linewidth=.5mm](1.732,0)(2.598,-1.5)
\pspolygon[fillcolor=gray,fillstyle=solid](1.732,0)(.866,0)(.866,.5)

\psline[linewidth=.5mm](2.598,1.5)(6.062,3.5)
\psline[linewidth=.5mm](.866,1.5)(6.062,4.5)
\psline[linewidth=.5mm](.866,1.5)(2.598,1.5)
\pspolygon[fillcolor=gray,fillstyle=solid](.866,1.5)(0,1)(.433,.75)

\SpecialCoor
\psline(0,-6)(0,6)
\multido{\n=1+1}{7}{
\psline(\n;30)(\n;330)}
\rput(1;30){\psline(0,0)(0,5.5)}
\rput(1;330){\psline(0,0)(0,-5.5)}
\rput(2;30){\psline(0,0)(0,5)}
\rput(2;330){\psline(0,0)(0,-5)}
\rput(3;30){\psline(0,0)(0,4.5)}
\rput(3;330){\psline(0,0)(0,-4.5)}
\rput(4;30){\psline(0,0)(0,4)}
\rput(4;330){\psline(0,0)(0,-4)}
\rput(5;30){\psline(0,0)(0,3.5)}
\rput(5;330){\psline(0,0)(0,-3.5)}
\rput(6;30){\psline(0,0)(0,3)}
\rput(6;330){\psline(0,0)(0,-3)}
\rput(7;30){\psline(0,0)(0,2.5)}
\rput(7;330){\psline(0,0)(0,-2.5)}
\multido{\n=1+1}{7}{
\psline(\n;150)(\n;210)}
\rput(1;150){\psline(0,0)(0,5.5)}
\rput(1;210){\psline(0,0)(0,-5.5)}
\rput(2;150){\psline(0,0)(0,5)}
\rput(2;210){\psline(0,0)(0,-5)}
\rput(3;150){\psline(0,0)(0,4.5)}
\rput(3;210){\psline(0,0)(0,-4.5)}
\rput(4;150){\psline(0,0)(0,4)}
\rput(4;210){\psline(0,0)(0,-4)}
\rput(5;150){\psline(0,0)(0,3.5)}
\rput(5;210){\psline(0,0)(0,-3.5)}
\rput(6;150){\psline(0,0)(0,3)}
\rput(6;210){\psline(0,0)(0,-3)}
\rput(7;150){\psline(0,0)(0,2.5)}
\rput(7;210){\psline(0,0)(0,-2.5)}
\multido{\n=1.5+1.5}{4}{
\psline(-6.062,\n)(6.062,\n)}
\multido{\n=0+1.5}{5}{
\psline(-6.062,-\n)(6.062,-\n)}
\psline(0;0)(7;30)

\rput(0,1){\psline(0;0)(7;30)}
\rput(0,1){\psline(0;0)(7;210)}
\rput(0,2){\psline(0;0)(7;30)}
\rput(0,2){\psline(0;0)(7;210)}
\rput(0,3){\psline(0;0)(6;30)}
\rput(0,3){\psline(0;0)(7;210)}
\rput(0,4){\psline(0;0)(4;30)}
\rput(0,4){\psline(0;0)(7;210)}
\rput(0,5){\psline(0;0)(2;30)}
\rput(0,5){\psline(0;0)(7;210)}
\rput(0,6){\psline(0;0)(7;210)}
\rput(-1.732,6){\psline(0;0)(5;210)}
\rput(-3.464,6){\psline(0;0)(3;210)}
\rput(-5.196,6){\psline(0;0)(1;210)}

\rput(0,-1){\psline(0;0)(7;30)}
\rput(0,-1){\psline(0;0)(7;210)}
\rput(0,-2){\psline(0;0)(7;30)}
\rput(0,-2){\psline(0;0)(7;210)}
\rput(0,-3){\psline(0;0)(7;30)}
\rput(0,-3){\psline(0;0)(6;210)}
\rput(0,-4){\psline(0;0)(7;30)}
\rput(0,-4){\psline(0;0)(4;210)}
\rput(0,-5){\psline(0;0)(7;30)}
\rput(0,-5){\psline(0;0)(2;210)}
\rput(0,-6){\psline(0;0)(7;30)}
\rput(1.732,-6){\psline(0;0)(5;30)}
\rput(3.464,-6){\psline(0;0)(3;30)}
\rput(5.196,-6){\psline(0;0)(1;30)}

\psline(0;0)(6.928;60)
\rput(1.732,0){\psline(0;0)(6.928;60)}
\rput(3.464,0){\psline(0;0)(5.196;60)}
\rput(5.196,0){\psline(0;0)(1.732;60)}
\rput(1.732,0){\psline(0;0)(6.928;240)}
\rput(3.464,0){\psline(0;0)(6.928;240)}
\rput(5.196,0){\psline(0;0)(6.928;240)}
\rput(6.062,-1.5){\psline(0;0)(5.196;240)}
\rput(6.062,-4.5){\psline(0;0)(1.732;240)}
\rput(-1.732,0){\psline(0;0)(6.928;60)}
\rput(-3.464,0){\psline(0;0)(6.928;60)}
\rput(-5.196,0){\psline(0;0)(6.928;60)}
\rput(-1.732,0){\psline(0;0)(6.928;240)}
\rput(-3.464,0){\psline(0;0)(5.196;240)}
\rput(-5.196,0){\psline(0;0)(1.732;240)}
\rput(-6.062,1.5){\psline(0;0)(5.196;60)}
\rput(-6.062,4.5){\psline(0;0)(1.732;60)}

\psline(0;0)(6.928;120)
\psline(0;0)(6.928;300)
\rput(-1.732,0){\psline(0;0)(6.928;120)}
\rput(-3.464,0){\psline(0;0)(5.196;120)}
\rput(-5.196,0){\psline(0;0)(1.732;120)}
\rput(-1.732,0){\psline(0;0)(6.928;300)}
\rput(-3.464,0){\psline(0;0)(6.928;300)}
\rput(-5.196,0){\psline(0;0)(6.928;300)}
\rput(-6.062,-1.5){\psline(0;0)(5.196;300)}
\rput(-6.062,-4.5){\psline(0;0)(1.732;300)}
\rput(1.732,0){\psline(0;0)(6.928;300)}
\rput(3.464,0){\psline(0;0)(5.196;300)}
\rput(5.196,0){\psline(0;0)(1.732;300)}
\rput(1.732,0){\psline(0;0)(6.928;120)}
\rput(3.464,0){\psline(0;0)(6.928;120)}
\rput(5.196,0){\psline(0;0)(6.928;120)}
\rput(6.062,1.5){\psline(0;0)(5.196;120)}
\rput(6.062,4.5){\psline(0;0)(1.732;120)}

\rput(0,1){\psline(0;0)(7;150)}
\rput(0,1){\psline(0;0)(7;330)}
\rput(0,2){\psline(0;0)(7;150)}
\rput(0,2){\psline(0;0)(7;330)}
\rput(0,3){\psline(0;0)(6;150)}
\rput(0,3){\psline(0;0)(7;330)}
\rput(0,4){\psline(0;0)(4;150)}
\rput(0,4){\psline(0;0)(7;330)}
\rput(0,5){\psline(0;0)(7;330)}
\rput(0,6){\psline(0;0)(7;330)}
\rput(-1.732,6){\psline(0;0)(2;330)}

\rput(0,-1){\psline(0;0)(7;150)}
\rput(0,-1){\psline(0;0)(7;330)}
\rput(0,-2){\psline(0;0)(7;150)}
\rput(0,-2){\psline(0;0)(7;330)}
\rput(0,-3){\psline(0;0)(7;150)}
\rput(0,-3){\psline(0;0)(6;330)}
\rput(0,-4){\psline(0;0)(7;150)}
\rput(0,-4){\psline(0;0)(4;330)}
\rput(0,-5){\psline(0;0)(7;150)}
\rput(0,-5){\psline(0;0)(2;330)}
\rput(0,-6){\psline(0;0)(7;150)}

\rput(6.062,3.5){\psline(0;0)(5;150)}
\rput(6.062,4.5){\psline(0;0)(3;150)}
\rput(6.062,5.5){\psline(0;0)(1;150)}

\rput(-6.062,-3.5){\psline(0;0)(5;330)}
\rput(-6.062,-4.5){\psline(0;0)(3;330)}
\rput(-6.062,-5.5){\psline(0;0)(1;330)}

\psline(0;0)(7;330)
\psline(0;0)(7;150)
\psline(0;0)(7;210)
\psline(0;0)(6.928;240)


\end{pspicture}
\end{center}
\end{textblock}

\begin{textblock}{10.5}(10,5)
\begin{center}
\psset{unit=.5cm}
\begin{pspicture}(-6,-1)(9,9.4)
\psset{linewidth=.1mm}

\pspolygon[fillcolor=lightgray,fillstyle=solid](0,0)(0,3)(.866,1.5)

\psline[linewidth=.5mm]{->}(0,0)(.866,1.5)
\pspolygon[fillcolor=gray,fillstyle=solid](0,3)(.866,4.5)(.866,1.5)

\pspolygon[fillcolor=lightgray,fillstyle=solid](0,3)(0,6)(.866,4.5)

\pspolygon[fillcolor=gray,fillstyle=solid](0,6)(.866,4.5)(.866,7.5)

\psline(0,0)(.866,1.5)
\psline(0,0)(0,9)

\psline(0,9)(.866,9)
\psline(0,7.5)(.866,7.5)
\psline(0,6)(.866,6)
\psline(0,4.5)(.866,4.5)
\psline(0,3)(.866,3)
\psline(0,1.5)(.866,1.5)

\psline(0,1)(.866,1.5)
\psline(0,2)(.866,2.5)
\psline(0,3)(.866,3.5)
\psline(0,4)(.866,4.5)
\psline(0,5)(.866,5.5)
\psline(0,6)(.866,6.5)
\psline(0,7)(.866,7.5)
\psline(0,8)(.866,8.5)

\psline(0,1)(.433,.75)
\psline(0,2)(.866,1.5)
\psline(0,3)(.866,2.5)
\psline(0,4)(.866,3.5)
\psline(0,5)(.866,4.5)
\psline(0,6)(.866,5.5)
\psline(0,7)(.866,6.5)
\psline(0,8)(.866,7.5)
\psline(0,9)(.866,8.5)

\psline(.866,1.5)(.866,9)

\psline(0,3)(.866,1.5)
\psline(.866,4.5)(0,3)
\psline(0,6)(.866,4.5)
\psline(0.866,7.5)(0,6)
\psline(0,9)(.866,7.5)
\rput(1,.8){$t$}

\end{pspicture}\end{center}
\end{textblock}
\end{figure}
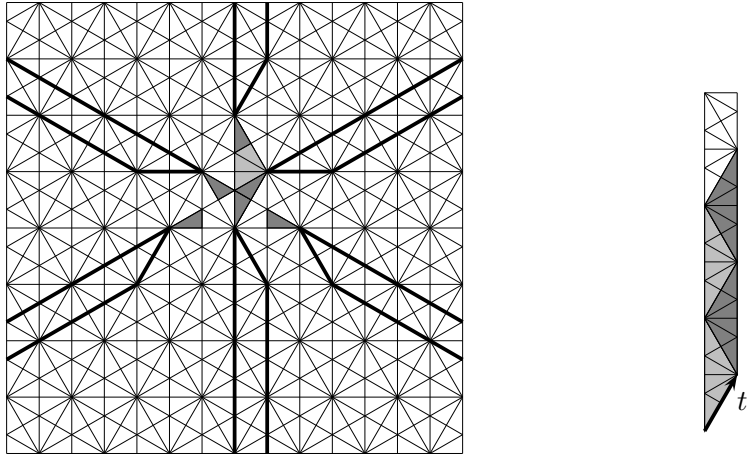

$\ $\\

\vspace{5.7cm}
We now have a look at the two-sided cell $\tc_{2}$. We set $w_{2}=s_{1}s_{3}$. In Figure \ref{c2}, we show the elements of the set $\cZ=\{z_{1},\ldots,z_{6}\}$: these are the elements which correspond to the alcoves in dark gray. We set  $t=s_{1}s_{3}s_{2}$ and 
$$\cT:=\{t^{n}|n\in \nN\}.$$
Then we have
\begin{align*}
\tc_{2}&=\{z_{i}^{-1}\tau w_{2}z_{j}\mid \tau \in \cT, 1\leq i,j\leq 6\}\\
\tc_{2}^{j}&=\{z_{i}^{-1}\tau w_{2}z_{j}\mid \tau \in \cT, 1\leq j\leq 6\}.
\end{align*}
This is explained in Figure \ref{c2}.

\begin{figure}[h!]
\caption{Description of the two-sided cell $\tc_{2}$}
\label{c2}
\begin{textblock}{10.5}(3,17)
\begin{center}
\psset{unit=.5cm}
\begin{pspicture}(-6,-7.4)(6,6.4)
\psset{linewidth=.1mm}

\psline[linewidth=.5mm](-.866,0)(-1.732,0)
\psline[linewidth=.5mm](-1.732,0)(-5.196,-6)
\psline[linewidth=.5mm](-.866,0)(-.866,-1.5)
\psline[linewidth=.5mm](-.866,-1.5)(-3.464,-6)
\pspolygon[fillcolor=gray,fillstyle=solid](0,0)(-.866,0)(-.866,0.5)

\psline[linewidth=.5mm](.866,0)(1.732,0)
\psline[linewidth=.5mm](1.732,0)(5.196,-6)
\psline[linewidth=.5mm](.866,0)(.866,-1.5)
\psline[linewidth=.5mm](.866,-1.5)(3.464,-6)
\pspolygon[fillcolor=gray,fillstyle=solid](0,0)(.866,0)(.866,0.5)

\psline[linewidth=.5mm](-.433,.75)(-.866,1.5)
\psline[linewidth=.5mm](-.433,.75)(-1.732,0)
\psline[linewidth=.5mm](-.866,1.5)(-6.062,1.5)
\psline[linewidth=.5mm](-1.732,0)(-6.062,0)
\pspolygon[fillcolor=gray,fillstyle=solid](0,0)(0,1)(.433,.75)

\psline[linewidth=.5mm](.433,.75)(.866,1.5)
\psline[linewidth=.5mm](.433,.75)(1.732,0)
\psline[linewidth=.5mm](.866,1.5)(6.062,1.5)
\psline[linewidth=.5mm](1.732,0)(6.062,0)
\pspolygon[fillcolor=gray,fillstyle=solid](0,0)(0,1)(-.433,.75)

\psline[linewidth=.5mm](0,1.5)(0,3)
\psline[linewidth=.5mm](0,1.5)(-.866,1.5)
\psline[linewidth=.5mm](-.866,1.5)(-3.464,6)
\psline[linewidth=.5mm](0,3)(-1.732,6)
\pspolygon[fillcolor=gray,fillstyle=solid](0,1.5)(.866,1.5)(0,1)

\psline[linewidth=.5mm](.433,2.25)(0,3)
\psline[linewidth=.5mm](.433,2.25)(1.732,3)
\psline[linewidth=.5mm](0,3)(1.732,6)
\psline[linewidth=.5mm](1.732,3)(3.464,6)
\pspolygon[fillcolor=gray,fillstyle=solid](0,2)(.433,2.25)(.866,1.5)

\SpecialCoor
\psline(0,-6)(0,6)
\multido{\n=1+1}{7}{
\psline(\n;30)(\n;330)}
\rput(1;30){\psline(0,0)(0,5.5)}
\rput(1;330){\psline(0,0)(0,-5.5)}
\rput(2;30){\psline(0,0)(0,5)}
\rput(2;330){\psline(0,0)(0,-5)}
\rput(3;30){\psline(0,0)(0,4.5)}
\rput(3;330){\psline(0,0)(0,-4.5)}
\rput(4;30){\psline(0,0)(0,4)}
\rput(4;330){\psline(0,0)(0,-4)}
\rput(5;30){\psline(0,0)(0,3.5)}
\rput(5;330){\psline(0,0)(0,-3.5)}
\rput(6;30){\psline(0,0)(0,3)}
\rput(6;330){\psline(0,0)(0,-3)}
\rput(7;30){\psline(0,0)(0,2.5)}
\rput(7;330){\psline(0,0)(0,-2.5)}
\multido{\n=1+1}{7}{
\psline(\n;150)(\n;210)}
\rput(1;150){\psline(0,0)(0,5.5)}
\rput(1;210){\psline(0,0)(0,-5.5)}
\rput(2;150){\psline(0,0)(0,5)}
\rput(2;210){\psline(0,0)(0,-5)}
\rput(3;150){\psline(0,0)(0,4.5)}
\rput(3;210){\psline(0,0)(0,-4.5)}
\rput(4;150){\psline(0,0)(0,4)}
\rput(4;210){\psline(0,0)(0,-4)}
\rput(5;150){\psline(0,0)(0,3.5)}
\rput(5;210){\psline(0,0)(0,-3.5)}
\rput(6;150){\psline(0,0)(0,3)}
\rput(6;210){\psline(0,0)(0,-3)}
\rput(7;150){\psline(0,0)(0,2.5)}
\rput(7;210){\psline(0,0)(0,-2.5)}
\multido{\n=1.5+1.5}{4}{
\psline(-6.062,\n)(6.062,\n)}
\multido{\n=0+1.5}{5}{
\psline(-6.062,-\n)(6.062,-\n)}
\psline(0;0)(7;30)

\rput(0,1){\psline(0;0)(7;30)}
\rput(0,1){\psline(0;0)(7;210)}
\rput(0,2){\psline(0;0)(7;30)}
\rput(0,2){\psline(0;0)(7;210)}
\rput(0,3){\psline(0;0)(6;30)}
\rput(0,3){\psline(0;0)(7;210)}
\rput(0,4){\psline(0;0)(4;30)}
\rput(0,4){\psline(0;0)(7;210)}
\rput(0,5){\psline(0;0)(2;30)}
\rput(0,5){\psline(0;0)(7;210)}
\rput(0,6){\psline(0;0)(7;210)}
\rput(-1.732,6){\psline(0;0)(5;210)}
\rput(-3.464,6){\psline(0;0)(3;210)}
\rput(-5.196,6){\psline(0;0)(1;210)}

\rput(0,-1){\psline(0;0)(7;30)}
\rput(0,-1){\psline(0;0)(7;210)}
\rput(0,-2){\psline(0;0)(7;30)}
\rput(0,-2){\psline(0;0)(7;210)}
\rput(0,-3){\psline(0;0)(7;30)}
\rput(0,-3){\psline(0;0)(6;210)}
\rput(0,-4){\psline(0;0)(7;30)}
\rput(0,-4){\psline(0;0)(4;210)}
\rput(0,-5){\psline(0;0)(7;30)}
\rput(0,-5){\psline(0;0)(2;210)}
\rput(0,-6){\psline(0;0)(7;30)}
\rput(1.732,-6){\psline(0;0)(5;30)}
\rput(3.464,-6){\psline(0;0)(3;30)}
\rput(5.196,-6){\psline(0;0)(1;30)}

\psline(0;0)(6.928;60)
\rput(1.732,0){\psline(0;0)(6.928;60)}
\rput(3.464,0){\psline(0;0)(5.196;60)}
\rput(5.196,0){\psline(0;0)(1.732;60)}
\rput(1.732,0){\psline(0;0)(6.928;240)}
\rput(3.464,0){\psline(0;0)(6.928;240)}
\rput(5.196,0){\psline(0;0)(6.928;240)}
\rput(6.062,-1.5){\psline(0;0)(5.196;240)}
\rput(6.062,-4.5){\psline(0;0)(1.732;240)}
\rput(-1.732,0){\psline(0;0)(6.928;60)}
\rput(-3.464,0){\psline(0;0)(6.928;60)}
\rput(-5.196,0){\psline(0;0)(6.928;60)}
\rput(-1.732,0){\psline(0;0)(6.928;240)}
\rput(-3.464,0){\psline(0;0)(5.196;240)}
\rput(-5.196,0){\psline(0;0)(1.732;240)}
\rput(-6.062,1.5){\psline(0;0)(5.196;60)}
\rput(-6.062,4.5){\psline(0;0)(1.732;60)}

\psline(0;0)(6.928;120)
\psline(0;0)(6.928;300)
\rput(-1.732,0){\psline(0;0)(6.928;120)}
\rput(-3.464,0){\psline(0;0)(5.196;120)}
\rput(-5.196,0){\psline(0;0)(1.732;120)}
\rput(-1.732,0){\psline(0;0)(6.928;300)}
\rput(-3.464,0){\psline(0;0)(6.928;300)}
\rput(-5.196,0){\psline(0;0)(6.928;300)}
\rput(-6.062,-1.5){\psline(0;0)(5.196;300)}
\rput(-6.062,-4.5){\psline(0;0)(1.732;300)}
\rput(1.732,0){\psline(0;0)(6.928;300)}
\rput(3.464,0){\psline(0;0)(5.196;300)}
\rput(5.196,0){\psline(0;0)(1.732;300)}
\rput(1.732,0){\psline(0;0)(6.928;120)}
\rput(3.464,0){\psline(0;0)(6.928;120)}
\rput(5.196,0){\psline(0;0)(6.928;120)}
\rput(6.062,1.5){\psline(0;0)(5.196;120)}
\rput(6.062,4.5){\psline(0;0)(1.732;120)}

\rput(0,1){\psline(0;0)(7;150)}
\rput(0,1){\psline(0;0)(7;330)}
\rput(0,2){\psline(0;0)(7;150)}
\rput(0,2){\psline(0;0)(7;330)}
\rput(0,3){\psline(0;0)(6;150)}
\rput(0,3){\psline(0;0)(7;330)}
\rput(0,4){\psline(0;0)(4;150)}
\rput(0,4){\psline(0;0)(7;330)}
\rput(0,5){\psline(0;0)(7;330)}
\rput(0,6){\psline(0;0)(7;330)}
\rput(-1.732,6){\psline(0;0)(2;330)}

\rput(0,-1){\psline(0;0)(7;150)}
\rput(0,-1){\psline(0;0)(7;330)}
\rput(0,-2){\psline(0;0)(7;150)}
\rput(0,-2){\psline(0;0)(7;330)}
\rput(0,-3){\psline(0;0)(7;150)}
\rput(0,-3){\psline(0;0)(6;330)}
\rput(0,-4){\psline(0;0)(7;150)}
\rput(0,-4){\psline(0;0)(4;330)}
\rput(0,-5){\psline(0;0)(7;150)}
\rput(0,-5){\psline(0;0)(2;330)}
\rput(0,-6){\psline(0;0)(7;150)}

\rput(6.062,3.5){\psline(0;0)(5;150)}
\rput(6.062,4.5){\psline(0;0)(3;150)}
\rput(6.062,5.5){\psline(0;0)(1;150)}

\rput(-6.062,-3.5){\psline(0;0)(5;330)}
\rput(-6.062,-4.5){\psline(0;0)(3;330)}
\rput(-6.062,-5.5){\psline(0;0)(1;330)}

\psline(0;0)(7;330)
\psline(0;0)(7;150)
\psline(0;0)(7;210)
\psline(0;0)(6.928;240)

\end{pspicture}
\end{center}
\end{textblock}

\begin{textblock}{10.5}(10,18)
\begin{center}
\psset{unit=.5cm}
\begin{pspicture}(-6,-1)(9,9.4)
\psset{linewidth=.1mm}

\pspolygon[fillcolor=lightgray,fillstyle=solid](.433,-.75)(0,0)(.866,1.5)(.866,0)(1.732,0)

\pspolygon[fillcolor=gray,fillstyle=solid](.866,0)(1.732,0)(2.598,1.5)(1.3,.75)(.866,1.5)

\pspolygon[fillcolor=lightgray,fillstyle=solid](1.3,.75)(0.866,1.5)(1.732,3)(1.732,1.5)(2.598,1.5)

\pspolygon[fillcolor=gray,fillstyle=solid](1.732,1.5)(1.732,3)(2.165,2.25)(3.464,3)(2.598,1.5)

\psline(.433,-.75)(0,0)
\psline(.433,-.75)(1.732,0)
\psline(0,0)(3.464,6)
\psline(1.732,0)(5.196,6)

\psline(0,0)(.866,-.5)
\psline(.433,.75)(1.732,0)
\psline(.866,1.5)(2.196,.75)
\psline(1.3,2.25)(2.598,1.5)
\psline(1.732,3)(3.031,2.25)
\psline(2.165,3.75)(3.464,3)
\psline(2.598,4.5)(3.897,3.75)
\psline(3.031,5.25)(4.33,4.5)
\psline(3.464,6)(4.763,5.25)

\psline(0,0)(1.732,0)
\psline(.866,1.5)(2.598,1.5)
\psline(1.732,3)(3.464,3)
\psline(2.598,4.5)(4.33,4.5)
\psline(3.464,6)(5.196,6)

\psline(0,0)(2.598,1.5)
\psline(.866,1.5)(3.464,3)
\psline(1.732,3)(4.33,4.5)
\psline(2.598,4.5)(5.296,6)

\psline(.866,1.5)(1.732,0)
\psline(1.732,3)(2.598,1.5)
\psline(2.598,4.5)(3.464,3)
\psline(3.464,6)(4.33,4.5)

\psline(.866,-.5)(.866,1.5)
\psline(1.732,0)(1.732,3)
\psline(2.598,1.5)(2.598,4.5)
\psline(3.464,3)(3.464,6)
\psline(4.33,4.5)(4.33,6)

\end{pspicture}\end{center}
\end{textblock}
\end{figure}
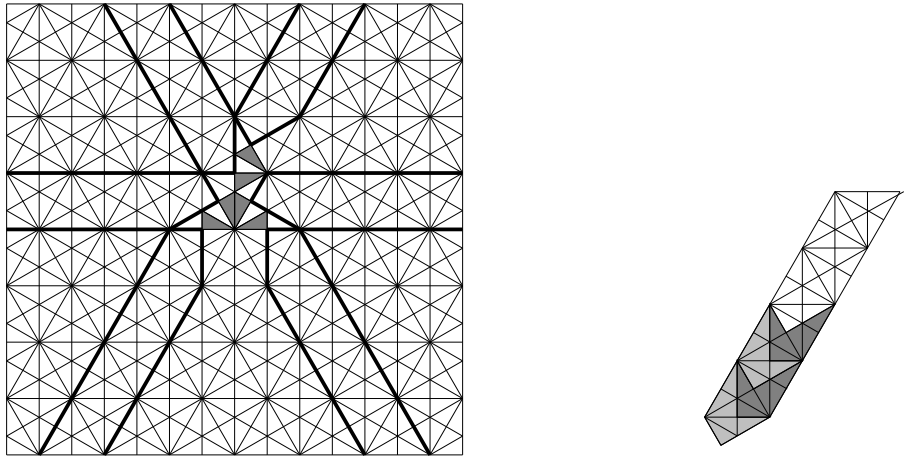

\vspace{7cm}
\noindent
We now have a look at $\tc_{3}$. We set $w_{\tc_{3}}=s_{1}$, $\cZ=\{z_{1},z_{2},z_{3}\}=\{e,s_{2},s_{2}s_{3}\}$ and
$$\cT:=\{e,s_{1}s_{2}\}.$$
Then we have
\begin{align*}
\tc_{3}&=\{z_{i}^{-1}\tau w_{1}z_{j}\mid \tau \in \cT, 1\leq i,j\leq 3\}\\
\tc_{3}^{j}&=\{z_{i}^{-1}\tau w_{1}z_{j}\mid \tau \in \cT, 1\leq j\leq 3\}.
\end{align*}

\bigskip

Finally, we have a look at $\tc_{4}=\{s_{2}s_{3}s_{2}\}$. In this case, if  we set $w_{\tc_{4}}=s_{2}s_{3}s_{2}$ and $\cZ=\cT=\{e\}$, the description in Section \ref{description} clearly holds.  

\appendix

\section{Some additional data}
\label{appendix}
The aim of this Appendix is to gather some data about cells in affine Weyl groups of rank 2 which are needed in the proof of Proposition \ref{affine-cellular-ideal}. We refer to \cite{comp} and \cite{jeju4} for details. \\

In this appendix, $(W,S)$ will denote an affine Weyl group of type $G$ or $B$ together with a generic weight function $L$. We set
$$\fC=\bigcup_{I\subsetneq S} W_{I}.$$
Let $x,y\in \fC$; we write $x\sim_{\cL\cR,\fC} y$ if there exist a sequence $x=x_{0},...,x_{n}=y$ in $\fC$ and a sequence $I_{0},...,I_{n-1}$ of subsets of $S$ such that
$$\text{$x_{k},x_{k+1}\in W_{I_{k}}$ and $x_{k}\sim_{\cLR}x_{k+1}$ in $W_{I_{k}}$}$$
for all $0\leq k\leq n-1$. This an equivalence relation and the equivalence classes will be called (for obvious reasons) the two-sided cells of $\fC$. We denote by $\cP_{\cL\cR,\fC}$ the associated partition of~$\fC$. It can be shown that the Lusztig $\ba$-function is constant on each of the equivalence classes. To each $c\in \cP_{\cL\cR,\fC}$, starting from the one with highest $\ba$-value, we associate the following subset of $W$:
$$\tc=\{w\in W\ |\ w=xuy,\ell(w)=\ell(x)+\ell(u)+\ell(y),\ x,y\in W, u\in c\}-\bigcup_{\ba(c')>\ba(c)} \tc'.$$
Then the sets $\tc$ are the two-sided cells of $W$ with respect to $L$ and the left cells lying in $\tc$ are the connected component of $\tc$.  
\begin{Rem}
In \cite{jeju4}, we introduced another equivalence relation denoted $\sim_{\fC}$. In our case, since the weight function is generic, it can be shown that the two equivalence relations $\sim_{\cL\cR,\fC}$ and $\sim_{\fC}$ are the same.
\end{Rem}
\noindent
For each choice of parameters, we give the following data:
\begin{enumerate}
\item the partition $\cP_{\cL\cR,\fC}$;
\item an ordering of $\cP_{\cL\cR,\fC}$ with respect to Lusztig $\ba$-function.
\end{enumerate}
These data determine the partition of $W$ into cells. The explicit partition can be found in \cite{jeju4} in type $G$ and in \cite{comp} in type $B$. 
\begin{Rem}
We sometime write $c_{i}\leftrightarrow c_{j}$ in the ordering of $\cP_{\cL\cR,\fC}$ to signify that for some values of the parameters we have $\ba(c_{i})>\ba(c_{j})$ and for some others we have $\ba(c_{j})\geq \ba(c_{i})$ but the corresponding sets $\tc_{i}$ and $\tc_{j}$ are the same whether $\tc_{i}$ or $\tc_{j}$ is computed first in the process.
\end{Rem}

Let $\tc$ be a two-sided cell associated to $c\in\cP_{\cL\cR,\fC}$ with left cells decomposition 
$$\tc:=\bigcup \tc^{i} \text{ ($i=1\ldots m$)}.$$
We define an element $w_{c}\in W$ and two subsets $\cZ:=\{z_{1},\ldots,z_{m}\}$ and $\cT$ such that 
$$\begin{array}{rcl}
\tc&=&\{z_{i}^{-1}\tau w_{c} z_{j}\mid \tau \in \cT, 1\leq i,j\leq m\},\\
\tc^{j}&=&\{z_{i}^{-1}\tau w_{c}z_{j}\mid \tau \in \cT, 1\leq i\leq m\},\\
(\tc^{i})^{-1}\cap \tc^{j}&=&\{z_{i}^{-1}\tau w_{c}z_{j}\mid \tau\in\cT\}.
\end{array}$$
We introduce the following notation for $t\in W$:
\begin{align*}
\sg t\sd&=\{t^{n}\mid n\in\nN\},\\
\widehat{t}&=\{w\in W\mid \ell(w^{-1}t)=\ell(t)-\ell(w)\}.
\end{align*}
\begin{Rem}
In other word the set $\hat{t}$ consists of all the elements $s_{1}\ldots s_{k}$ where $k\leq \ell(t)$ such that there exists a reduced expression of $t$ of the form $s_{1}\ldots s_{k}s_{k+1}\ldots s_{\ell(t)}$. For instance 
$$\widehat{s_{1}s_{2}s_{3}}=\{e,s_{1},s_{1}s_{2},s_{1}s_{2}s_{3}\}\text{ and }\widehat{s_{1}s_{3}}=\{e,s_{1},s_{3},s_{1}s_{3}\}$$
\end{Rem}

\subsection{Affine Weyl group of type $G$}
We keep the setting of Example \ref{G2}. As far as the lowest two-sided cell $\tc_{0}$ is concerned, the sets $\cT,\cZ$ and the element $w_{c_{0}}$ are the same as in Section \ref{examples} for all choices of parameters.

\bigskip

\noindent
{\bf Case ${\bf r>1}$.}
{\small
\begin{table}[htbp] \caption{Partition $\cP_{\cL\cR,\fC}$ and values of the $\ba$-function } 
\begin{center}
$\renewcommand{\arraystretch}{1.2}
\begin{array}{|l|c|} \hline
c_{6}=\{e\} & 0 \\
c_{5}=W_{2,3}-\{w_{2,3},e\} & b\\
c_{4}=\{w_{2,3}\}&  3b\\
c_{3}=W_{1,2}-\{e,s_{2},s_{1}s_{2}s_{1}s_{2}s_{1},w_{1,2}\}&  a\\
c_{2}=\{w_{1,3}\} & a+b \\
c_{1}=\{s_{1}s_{2}s_{1}s_{2}s_{1}\}& 3a-2b\\ 
c_{0}= \{w_{1,2}\} & 3a+3b\\ \hline
\end{array}$
\end{center}
\end{table}
}

{\small 
\begin{table}[htbp] \caption{Ordering of the partition $\cP_{\fC}$ } 
\begin{center}
\renewcommand{\arraystretch}{1.2}
$\begin{array}{|c|ccccccc|} \hline
r>2 & c_{0} & c_{1} & c_{2} & \multicolumn{2}{l}{c_{3}  \overset{}{\leftrightarrow} c_{4}} & c_{5} & c_{6}\\ \hline

2>r>3/2 &c_{0} & \multicolumn{2}{c}{c_{1}  \overset{}{ \leftrightarrow} c_{4}} & c_{2} & c_{3} & c_{5} & c_{6}\\ \hline

3/2>r>1 &c_{0} & c_{4} & c_{2} & c_{1} & c_{3} & c_{5} & c_{6}\\ \hline

 \end{array}$
\end{center}
\end{table}}

\begin{table}[h!] \caption{The sets $\cT$ and $\cZ$} 
{\small
\begin{center}
$
\renewcommand{\arraystretch}{1.4}
\begin{array}{|cc|c|c|c|c|} \hline
&& r>2 & 2>r>3/2 & 3/2>r>1&w_{c_{i}} \\\hline 
\tc_{1},& \cT & \sg s_{1}s_{2}s_{1}s_{2}s_{3}\sd &\sg s_{2}s_{2}s_{1}s_{2}s_{3}\sd &\{e\}&s_{1}s_{2}s_{1}s_{2}s_{1}   \\
 & \cZ& \widehat{s_{3}s_{2}s_{1}s_{2}s_{3}} &\widehat{s_{3}s_{2}s_{1}s_{2}s_{3}}  & \{e\} &\\\hline
\tc_{2},&\cT& \sg s_{1}s_{3}s_{2}\sd & \{e\}& \sg s_{1}s_{3}s_{2}s_{1}s_{2}\sd & s_{1}s_{3} \\
 &\cZ& \widehat{s_{2}s_{1}s_{2}s_{3}}\cup\{s_{2}s_{3}\} & \widehat{s_{2}s_{1}s_{2}s_{3}} &\widehat{s_{2}s_{1}s_{2}s_{1}}\cup\{s_{2}s_{1}s_{2}s_{3}\}  & \\\hline
\tc_{3},& \cT & \{e,s_{1}s_{2}\} & \{e,s_{1}s_{2}\}&\{e,s_{1}s_{2}\} & s_{1} \\
 &\cZ& \widehat{s_{2}s_{3}} &  \widehat{s_{2}s_{3}} &  \widehat{s_{2}s_{3}}&  \\\hline
\tc_{4},&\cT & \{e\} & \sg s_{3}s_{2}s_{1}\sd& \sg s_{3}s_{2}s_{1}\sd  & s_{2}s_{3}s_{2} \\
 &\cZ& \{e\} &\widehat{s_{1}s_{2}s_{1}s_{2}s_{3}} & \widehat{s_{1}s_{2}s_{1}s_{2}s_{3}}&  \\\hline

\end{array}
$
\end{center}}
\end{table}
\newpage
\vspace{1cm}
\noindent
{\bf Case ${\bf r<1}$.}

{\small
\begin{table}[htbp] \caption{Partition $\cP_{\cL\cR,\fC}$ and value of the $\ba$-function when $b>a$} 
\begin{center}
$\renewcommand{\arraystretch}{1.3}
\begin{array}{|l|c|} \hline
c_{6}=\{e\} & 0 \\
c_{5}=\{s_{1}\} & a\\
c_{4}=\fC -\{e,s_{1},s_{2}s_{1}s_{2}s_{1}s_{2},w_{1,2},w_{1,3},w_{2,3}\}&  b\\
c_{3}=  \{w_{1,3}\}& a+b\\
c_{2}=\{s_{2}s_{1}s_{2}s_{1}s_{2}\} & 3b-2a \\
c_{1}=\{w_{2,3}\}& 3b\\ 
c_{0}= \{w_{1,2}\} & 3a+3b\\ \hline
\end{array}$
\end{center}
\end{table}
}

We get the following ordering 
$$c_{0}\ c_{1}\ c_{2}\leftrightarrow c_{3} \ c_{4} \ c_{5} \ c_{6}.$$

\begin{table}[h!] \caption{The sets $\cT$ and $\cZ$} 
{\footnotesize
\begin{center}
$
\renewcommand{\arraystretch}{1.4}
\begin{array}{|cc|c|c|c|} \hline
&& r<1 & w_{c_{i}}\\\hline 
\tc_{1},& \cT & \sg s_{3}s_{2}s_{1}\sd & s_{2}s_{3}s_{2}   \\
 & \cZ& \widehat{s_{1}s_{2}s_{1}s_{2}s_{3}} &\\\hline
 \tc_{2},& \cT & \{e\}   & s_{2}s_{1}s_{2}s_{1}s_{2} \\
 & \cZ&\widehat{s_{3}} &\\\hline
 \tc_{3},& \cT & \sg s_{1}s_{3}s_{2}s_{1}s_{2}\sd & s_{1}s_{3}   \\
 & \cZ&\widehat{s_{2}s_{1}s_{2}s_{1}}\cup\{s_{2}s_{1}s_{2}s_{3}\}& \\\hline
  \tc_{5},& \cT & \{e\}  &s_{1}  \\
 & \cZ&\{e\}&\\\hline
 \end{array}
$
\end{center}}
\end{table}
$\ $\\

\subsection{Affine Weyl group of type $B$}
We keep the setting of Example \ref{B2}. As far as the lowest two-sided cell $\tc_{0}$ is concerned, the set $\cT$, $\cZ$ and the element $w_{c_{0}}$ are the same for all choices of parameters, namely $w_{c_{0}}=s_{1}s_{2}s_{1}s_{2}$, 
$$\cZ=\widehat{s_{3}s_{2}s_{1}s_{3}s_{2}s_{3}}$$
and
$$\cT=\{t_{1}^{m}t_{2}^{n}\mid n,m\in\nN\}$$
where $t_{1}=s_{2}s_{1}s_{2}s_{3}$, $t_{2}=s_{1}s_{2}s_{1}s_{3}s_{2}s_{3}$.

\bigskip

{\bf Generic parameters in zone $A_{i}$ ($1\leq i\leq 5$).}\\

{\small
\begin{table}[h!] \caption{Partition $\cP_{LR,\fC}$ and values of the $\ba$-function} 
\begin{center}
$\renewcommand{\arraystretch}{1.4}
\begin{array}{|l|c|} \hline
c_{8}=\{e\} & 0 \\
c_{7}=\{s_{3}\} & c\\
c_{6}=\{s_{2},s_{3}s_{2},s_{2}s_{3},s_{3}s_{2}s_{3}\}& b\\ 
c_{5}=\{s_{2}s_{3}s_{2}\} & 2b-c\\
c_{4}=\{s_{2}s_{3}s_{2}s_{3}\}& 2b+2c\\ 
c_{3}=\{s_{1},s_{2}s_{1},s_{1}s_{2},s_{2}s_{1}s_{2}\}& a \\
c_{2}=\{s_{1}s_{3}\}&  a+c\\
c_{1}= \{s_{1}s_{2}s_{1}\} & 2a-b\\ 
c_{0}= \{w_{1,2}\} & 2a+2b\\ \hline
\end{array}$
\end{center}
\end{table}
}

{\small
\begin{table}[h] \caption{Partition $\cP_{\fC}$} 
\begin{center}
\renewcommand{\arraystretch}{1.4}
$\begin{array}{|c|ccccccccc|} \hline
(r_{1},r_{2})\in A_{1}  & c_{0} & c_{1} & c_{2} &\multicolumn{2}{r}{ c_{3}  \leftrightarrow c_{4}} & c_{5} & c_{6} & c_{7} & c_{8}\\ \hline
(r_{1},r_{2})\in A_{2}  &c_{0} & c_{1} & c_{4} & c_{2} & c_{3} & c_{5} & c_{6} & c_{7} & c_{8}\\ \hline 
(r_{1},r_{2})\in A_{3} & c_{0} & \multicolumn{2}{r}{  c_{1} \leftrightarrow c_{4}}  & c_{2}  & c_{5} & c_{3} & c_{6} & c_{7} & c_{8} \\ \hline
(r_{1},r_{2})\in A_{4} & c_{0} & c_{4} & c_{2} & c_{1}  & c_{5} & c_{3} & c_{6} & c_{7} & c_{8} \\ \hline
(r_{1},r_{2})\in A_{5} & c_{0} & c_{4} & c_{2} & c_{1} & c_{3} & c_{5} & c_{6} & c_{7} & c_{8} \\ \hline
\end{array}$
\end{center}
\end{table}}

\begin{table}[h!] \caption{The sets $\cZ$ and $\cT$} 
{\small
\begin{center}
$
\renewcommand{\arraystretch}{1.4}
\begin{array}{|cc|c|c|c|c|c|c|} \hline
&& A_{1} & A_{2} & A_{3} & A_{4} & A_{5}& w_{c_{i}} \\\hline 
\tc_{1},& \cT & \sg s_{1}s_{2}s_{3}\sd &\sg s_{1}s_{2}s_{3}\sd &\sg s_{1}s_{2}s_{3}\sd  &\{e\} &\{e\}&s_{1}s_{2}s_{1} \\
 & \cZ& \widehat{s_{3}s_{2}s_{3}} &\widehat{s_{3}s_{2}s_{3}}  &\widehat{s_{3}s_{2}s_{3}}& \{e\}&\{e\}& \\\hline
\tc_{2},& \cT & \sg s_{1}s_{2}s_{3}s_{2}\sd& \{e\}& \{e\} & \sg s_{1}s_{3}s_{2}\sd &\sg s_{1}s_{3}s_{2}\sd& s_{1}s_{3}\\
 & \cZ&  \widehat{s_{2}s_{3}s_{2}}&  \widehat{s_{2}s_{3}}& \widehat{s_{2}s_{3}}&\widehat{s_{2}s_{1}}\cup \{s_{2}s_{3}\}&\widehat{s_{2}s_{1}}\cup \{s_{2}s_{3}\}& \\\hline
\tc_{3},& \cT & \sg s_{1}s_{2}s_{3}s_{2}\sd& \sg s_{1}s_{2}s_{3}s_{2}\sd& \{e\} & \{e\}& \sg s_{1}s_{2}s_{3}s_{2}\sd &s_{1}\\
 & \cZ&  \widehat{s_{2}s_{3}s_{2}}&   \widehat{s_{2}s_{3}s_{2}}& \widehat{s_{2}s_{3}}& \widehat{s_{2}s_{3}}&\widehat{s_{2}s_{3}s_{2}}& \\\hline
  \tc_{4},& \cT &\{e\} &\sg s_{2}s_{3}s_{2}s_{1}\sd &  \sg s_{2}s_{3}s_{2}s_{1}\sd & \sg s_{2}s_{3}s_{2}s_{1}\sd &\sg s_{2}s_{3}s_{2}s_{1}\sd &s_{2}s_{3}s_{2}s_{3}\\
 & \cZ& \{e\} &\widehat{s_{1}s_{2}s_{3}}  &\widehat{s_{1}s_{2}s_{3}}&\widehat{s_{1}s_{2}s_{3}} &\widehat{s_{1}s_{2}s_{3}}& \\\hline
 \tc_{5},& \cT &\{e\} &\{e\} & \sg s_{2}s_{3}s_{2}s_{1}\sd &\sg s_{2}s_{3}s_{2}s_{1}\sd &\{e\}&s_{2}s_{3}s_{2}\\
 & \cZ& \{e\} &\{e\}  &\widehat{s_{1}s_{2}s_{3}}&\widehat{s_{1}s_{2}s_{3}} &\{e\} &\\\hline
 \tc_{6},& \cT & \{e\}& \{e\}&\{e\}  &\{e\} &\{e\}&s_{2}\\
 & \cZ& \widehat{s_{3}} & \widehat{s_{3}} &\widehat{s_{3}}&\widehat{s_{3}} & \widehat{s_{3}}&\\\hline
 \tc_{7},& \cT &  \{e\}& \{e\} & \{e\}  & \{e\} & \{e\}&s_{3}\\
 & \cZ&  \{e\} & \{e\}  & \{e\}&  \{e\}&  \{e\}&\\\hline

\end{array}
$
\end{center}}
\end{table}
$\ $\\

\newpage

$\ $\\
\noindent
{\bf Generic parameters in zone $B_{i}$ ($i=1,2$).}\\

{\small
\begin{table}[h] \caption{Partition $\cP_{LR,\fC}$ and values of the $\ba$-function} 
\begin{center}
$\renewcommand{\arraystretch}{1.4}
\begin{array}{|l|c|} \hline
c_{8}=\{e\} & 0 \\
c_{7}=\{s_{3}\} & c\\
c_{6}=\{s_{1}\}& a\\ 
c_{5}=\{s_{1}s_{3}\}&  a+c\\
c_{4}=\{s_{2},s_{1}s_{2},s_{2}s_{1},s_{1}s_{2}s_{1},s_{3}s_{2},s_{2}s_{3},s_{3}s_{2}s_{3}\} & b\\
c_{3}=\{s_{2}s_{1}s_{2}\}& 2b-a\\ 
c_{2}=\{s_{2}s_{3}s_{2}\} & 2b-c \\
c_{1}= \{w_{2,3}\} & 2b+2c\\ 
c_{0}= \{w_{1,2}\} & 2a+2b\\ \hline
\end{array}$
\end{center}
\end{table}
}

\vspace{-.3cm}
{\small
\begin{table}[h] \caption{Partition $\cP_{\fC}$} 
\begin{center}
\renewcommand{\arraystretch}{1.4}
$\begin{array}{|c|ccccccccc|} \hline
(r_{1},r_{2})\in B_{2} & c_{0} & c_{1} & c_{2} & c_{3}  & c_{4} & c_{5} & c_{6} & c_{7} & c_{8}\\ \hline

(r_{1},r_{2})\in B_{1}& c_{0} & c_{1} & c_{2} &\multicolumn{2}{l}{c_{3}  \leftrightarrow c_{5}} & c_{4} & c_{6} & c_{7} & c_{8}\\ \hline \end{array}$
\end{center}
\end{table}}

\begin{table}[h!] \caption{The sets $\cT$ and $\cZ$} 
{\small
\begin{center}
$
\renewcommand{\arraystretch}{1.4}
\begin{array}{|cc|c|c|c|c|c|} \hline
&& B_{1} & B_{2} &  w_{c} \\\hline 
\tc_{1},& \cT &  \sg s_{2}s_{3}s_{2}s_{1}\sd  &  \sg s_{2}s_{3}s_{2}s_{1}\sd  &   s_{2}s_{3}s_{2}s_{3}\\
& \cZ&\widehat{s_{1}s_{2}s_{3}}   &  \widehat{s_{1}s_{2}s_{3}} &  \\\hline
\tc_{2},& \cT   &\sg s_{2}s_{3}s_{2}s_{1}\sd  &\sg s_{2}s_{3}s_{2}s_{1}\sd  & s_{2}s_{3}s_{2}\\
& \cZ&  \widehat{s_{1}s_{2}s_{3}} &  \widehat{s_{1}s_{2}s_{3}}   &\\\hline
\tc_{3},& \cT   & \{e\} &\{e\}  & s_{2}s_{1}s_{2}\\
& \cZ&  \hat{s_{3}} & \hat{s_{3}}    &\\\hline
\tc_{4},& \cT   & \{e\} &\sg s_{2}s_{1}s_{3}\sd  & s_{2}\\
& \cZ& \{e,s_{1},s_{3}\} &  \widehat{s_{1}s_{3}}   &\\\hline
\tc_{5},& \cT   &  \sg s_{1}s_{3}s_{2}\sd &\{e\}  & s_{1}s_{3}\\
& \cZ&  \widehat{s_{2}s_{1}}\cup \{s_{2}s_{3}\}&   \{e\}  &\\\hline   
\tc_{6},& \cT   & \{e\} &\{e\}  & s_{1}\\
& \cZ&  \{e\} & \{e\}    &\\\hline
\tc_{7},& \cT   &\{e\}  & \{e\} & s_{3}\\
& \cZ& \{e\}  & \{e\}  &\\\hline

\end{array}
$
\end{center}}
\end{table}

\newpage

$\ $\\
\noindent
{\bf Generic parameters in zone $C_{i}$ ($i=1,2,3$).}\\

{\small
\begin{table}[h] \caption{Partition $\cP_{LR,\fC}$ and values of the $\ba$-function} 
\begin{center}
\renewcommand{\arraystretch}{1.4}
$\begin{array}{|l|c|} \hline
c_{8}=\{e\} & 0\\
c_{7}=\{s_{2}\} & b\\
c_{6}=\{s_{3},s_{2}s_{3},s_{3}s_{2},s_{2}s_{3}s_{2}\}   & c\\
c_{5}=\{s_{3}s_{2}s_{3}\} & 2c-b\\
c_{4}=\{s_{2}s_{3}s_{2}s_{3}\} & 2b+2c\\
c_{3}=\{s_{1},s_{2}s_{1},s_{1}s_{2},s_{2}s_{1}s_{2}\} & a\\
c_{2}=\{s_{1}s_{3}\}& a+c\\
c_{1}=\{s_{1}s_{2}s_{1}\} & 2a-b\\
c_{0}=\{w_{1,2}\}&  2a+2b\\ \hline
 \end{array}$
\end{center}
\end{table}}

{\small
\begin{table}[h] \caption{Partition $\cP_{\fC}$} 
\begin{center}
\renewcommand{\arraystretch}{1.4}
$\begin{array}{|c|ccccccccc|} \hline
(r_{1},r_{2})\in C_{1}  &c_{0} &  c_{4}  & c_{2} & c_{1} & c_{3} & c_{5} & c_{6} & c_{7} & c_{8}\\ \hline 
(r_{1},r_{2})\in C_{2}  &c_{0} & \multicolumn{2}{r}{ c_{1}  \leftrightarrow c_{4}}& c_{2} & c_{3} & c_{5} & c_{6} & c_{7} & c_{8}\\ \hline 
(r_{1},r_{2})\in C_{3}  & c_{0} & c_{1} & c_{2} &\multicolumn{2}{r}{ c_{3}  \leftrightarrow c_{4}} & c_{5} & c_{6} & c_{7} & c_{8}\\ \hline
\end{array}$
\end{center}
\end{table}}

\begin{table}[h!] \caption{The sets $\cT$ and $\cZ$} 
{\small
\begin{center}
$
\renewcommand{\arraystretch}{1.4}
\begin{array}{|cc|c|c|c|c|c|} \hline
&& C_{1} & C_{2} & C_{3}& w_{c} \\\hline 
\tc_{1},& \cT & \{e\} &  \sg s_{1}s_{2}s_{3}\sd & \sg s_{1}s_{2}s_{3}\sd & s_{1}s_{2}s_{1}\\
& \cZ& \{e\}& \widehat{s_{3}s_{2}s_{3}} & \widehat{s_{3}s_{2}s_{3}} &\\\hline
\tc_{2},& \cT & \sg s_{1}s_{3}s_{2}\sd &\{e\} & \sg s_{1}s_{2}s_{3}s_{2}\sd& s_{1}s_{3}\\
& \cZ& \widehat{s_{2}s_{1}}\cup \{s_{2}s_{3}\}& \widehat{s_{2}s_{3}}&  \widehat{s_{2}s_{3}s_{2}}&\\\hline
\tc_{3},& \cT &  \sg s_{1}s_{2}s_{3}s_{2}\sd&\sg s_{1}s_{2}s_{3}s_{2}\sd &\sg s_{1}s_{2}s_{3}s_{2}\sd &s_{1}\\
& \cZ&   \widehat{s_{2}s_{3}s_{2}}&  \widehat{s_{2}s_{3}s_{2}} &  \widehat{s_{2}s_{3}s_{2}} &\\\hline
\tc_{4},& \cT &  \sg s_{2}s_{3}s_{2}s_{1}\sd & \sg s_{2}s_{3}s_{2}s_{1}\sd &\{e\} &s_{2}s_{3}s_{2}s_{3}\\
& \cZ& \widehat{s_{1}s_{2}s_{3}}&\widehat{s_{1}s_{2}s_{3}} &\{e\} &\\\hline
\tc_{5},& \cT &  \{e\}  & \{e\}  & \{e\}  &s_{3}s_{2}s_{3} \\
& \cZ& \{e\}  & \{e\}  & \{e\}  &\\\hline
\tc_{6},& \cT &  \{e\} &  \{e\}&  \{e\}&s_{3}\\
& \cZ&\hat{s_{2}} &\hat{s_{2}}  & \hat{s_{2}} &\\\hline
\tc_{7},& \cT & \{e\} & \{e\} &  \{e\}&s_{2}\\
& \cZ&  \{e\}& \{e\} & \{e\} &\\\hline
\end{array}
$
\end{center}}
\end{table}

\bibliographystyle{plain}

\end{document}